\newcommand{\Gh}{\Gamma_h}
\newcommand{\Lin}{L^{\infty}}
\newcommand{\llin}{\ell^{\infty}}
\newcommand{\Pin}{P^{\infty}}
\newcommand{\ppin}{p^{\infty}}
\newcommand{\dt}{\partial_t}
\newcommand{\bdt}{\bar\partial_\tau}
\def\RR{\mathbb{R}}
\newtheorem{theorem}{Theorem}
\newtheorem{lemma}[theorem]{Lemma}
\newtheorem{problem}[theorem]{Problem}
\theoremstyle{definition}
\newtheorem{remark}[theorem]{Remark}
\begin{document}

\title[Analysis and solution of volume-surface reaction-diffusion systems]{Analysis and numerical solution of coupled \\volume-surface reaction-diffusion systems\\ with application to cell biology}

\author[H. Egger, K. Fellner, J.-F. Pietschmann, B.Q. Tang] {H. Egger, K. Fellner, J.-F. Pietschmann, B.Q. Tang}

\address{Herbert Egger, 
AG Numerical Analysis and Scientific Computing, 
Department of Mathematics, 
TU Darmstadt, 
Dolivostr. 15, 64293 Darmstadt, Germany.}
\email{egger@mathematik.tu-darmstadt.de}

\address{Klemens Fellner, 
Institute of Mathematics and Scientific Computing, 
University of Graz, 
Heinrichstra{\ss}e 36, 8010 Graz, Austria.}
\email{klemens.fellner@uni-graz.at}

\address{Jan-Frederik Pietschmann, 
Institute for Applied Mathematics,
Department of Mathematics and Computer Science, 
University of M\"unster, 
Orleansring 10, 48149 M\"unster, Germany.}
\email{pietschmann@mathematik.tu-darmstadt.de}

\address{Bao Q. Tang, 
Institute of Mathematics and Scientific Computing, 
University of Graz, 
Heinrichstra{\ss}e 36, 8010 Graz, Austria, and 
Faculty of Applied Mathematics and Informatics, 
Hanoi University of Science and Technology, 
1~Dai Co Viet, Hai Ba Trung, Hanoi, Vietnam.}
\email{quoc.tang@uni-graz.at} 

\subjclass[2010]{35K57, 65M60, 35B40}

\keywords{reaction-diffusion systems, surface diffusion, finite element method, uniform error estimates, entropy method, exponential stability, detailed balance equilibrium, asymmetric stem cell division}

\begin{abstract}
We consider the numerical solution of coupled volume-surface reaction-diffusion systems having a detailed balance equilibrium.
Based on the conservation of mass, an appropriate quadratic entropy functional is identified and an entropy-entropy dissipation inequality is proven. This allows us to show exponential convergence to  equilibrium by the entropy method.
We then investigate the discretization of the system by a finite element method and an implicit time stepping scheme including the domain approximation by polyhedral meshes.
Mass conservation and exponential convergence to equilibrium are established on the discrete level by arguments similar to those on the continuous level 
and we obtain estimates of optimal order for the discretization error which hold uniformly in time. 
Some numerical tests are presented to illustrate these theoretical results.
The analysis and the numerical approximation are discussed in detail for a simple model problem. The basic arguments however apply also in a more general context. This is demonstrated by investigation of a particular volume-surface reaction-diffusion system arising as a mathematical model for asymmetric stem cell division. 
\end{abstract}

\maketitle

\section{Introduction} \label{sec:intro}

Various physical phenomena in biology, material science, or chemical engineering
are driven by reaction-diffusion processes in different compartments 
and by transfer between them. 
This may involve mass transfer between different domains but also with domain interfaces or boundaries.
In cell-biology, for instance, many phenomena are based on reaction-diffusion processes of proteins within the cell-body and on the cell cortex \cite{Mayor03, NGCRSS}. 
Particular examples are systems modeling cell-biological signaling processes \cite{FNR} or models for asymmetric stem cell division which describe the localization of so-called cell-fate determinants during mitosis \cite{BMK,MEBWK, WNK}.  

As a simple model problem for such coupled volume-surface reaction-diffusion processes, we consider the system
\begin{subequations}\label{eq:1}
\begin{align}
\dt L - d_L\Delta L = 0  &\qquad\text{on } \Omega, \label{eq:1a} \\
\dt \ell - d_{\ell}\Delta_{\Gamma}\ell = \lambda L - \gamma \ell &\qquad \text{on } \Gamma, \label{eq:1b} \\
d_L\partial_{\nu}L = -\lambda L + \gamma \ell &\qquad \text{on } \Gamma,   \label{eq:1c}
\end{align}
\end{subequations}
which describes the diffusion of a substance with concentration $L$ in the volume and concentration $\ell$ on the surrounding surface coupled by mass transfer between these two compartments.
Here $\Omega$ is the volume domain, $\Gamma=\partial\Omega$ is the surface, $\Delta_\Gamma$ denotes the Laplace-Beltrami operator, and the model parameters $d_L,d_\ell,\lambda,\gamma$ are assumed to be positive constants. 
The equations are supposed to hold for all $t>0$ and are complemented by appropriate initial conditions.

The system \eqref{eq:1a}--\eqref{eq:1c} may serve as a starting point for considering more realistic adsorption and desorption processes
or as a reduced model for volume-surface reaction-diffusion processes arising in cell biology. 
But despite its simplicity, this model problem already features some interesting properties which will be of main interest for our further considerations:
\begin{itemize} 
\item[(i)] The system preserves non-negativity of solutions.
\item[(ii)] The total mass $M=\int_\Omega L dx + \int_\Gamma \ell ds$ is conserved for all time. 
\item[(iii)] There exists a unique constant positive detailed balance equilibrium which can be parametrized explicitly by the total mass $M$ and the model parameters $\lambda$ and $\gamma$.
\item[(iv)] The solutions are uniformly bounded and converge exponentially fast towards the equilibrium state with respect to any reasonable Lebesgue or Sobolev norm. 
\end{itemize}
The first goal of our paper will be to establish these properties, in particular (iii) and (iv), for the model problem \eqref{eq:1a}--\eqref{eq:1c}.
Based on this analysis, the second aim is then to investigate the systematic discretization of the system by finite elements and implicit time-stepping schemes.
The guideline for the construction of the numerical approximation is to preserve and utilize the key features of the model as far as possible also for the analysis on the discrete level.
In particular, we will establish the conservation of mass, prove the existence of a unique discrete equilibrium, and show the exponential convergence to equilibrium on the discrete level. We also comment briefly on the possibility for preserving the non-negativity of solutions.

Our arguments are closely related to the concepts of structure-preserving schemes, geometric integration, and compatible discretization that have been developed over the last decades. 
We refer the reader to the extensive survey \cite{CMO11} and the references therein.  
Several results concerning the preservation of entropic structures for discretization schemes have been obtained more recently, see e.g. \cite{BEJ14,CG15,CJS15,GG09,JS15}. 
This paper adds to this research by addressing the analysis and numerical solution of coupled volume-surface reaction-diffusion systems.

%
Apart from the entropy arguments, the numerical analysis is based on standard tools for the discretization of evolution problems \cite{Thomee,Wheeler73}, on previous work concerning the analysis of domain approximations \cite{BaDeck99,Deck,Deck_etal09,DE07}, and on recent results from \cite{ER}, who considered a somewhat simpler stationary volume-surface reaction-diffusion problem. 
The entropy estimates will allow us to establish order optimal convergence estimates with constants that are uniform in time. 

The model problem \eqref{eq:1a}--\eqref{eq:1c} is simple enough to avoid complicated notation and to present our basic ideas in the most convenient way to the reader. 
To illustrate the applicability to more general problems, we consider in Section~\ref{sec:extension} the following system 
which models the evolution of four conformations of the key protein Lgl during the mitosis 
of Drosophila SOP precursor stem-cells \cite{BMK,FRT,MEBWK}.
\begin{subequations}\label{eq:2}
\begin{align}
\dt L - d_L\Delta L = -\beta L + \alpha P &\quad \text{ on } \Omega, \label{eq:2a}\\
\dt P - d_P\Delta P = \beta L - \alpha P, &\quad \text{ on }\Omega, \label{eq:2b}\\
\dt \ell - d_{\ell}\Delta_{\Gamma}\ell = -d_L\partial_n L + \chi_{\Gamma_2}(-\sigma \ell + \kappa p) &\quad \text{ on }\Gamma, \label{eq:2c}\\
\dt p - d_p\Delta_{\Gamma} = \sigma \ell - \kappa p - d_P\partial_n P &\quad \text{ on }\Gamma_2.\label{eq:2d}
\end{align}
Note that the last equation only holds on a part $\Gamma_2\subseteq \Gamma$ of the boundary.
The mass transfer between the domain and the surface is described by
\begin{align}
d_L\partial_n L = -\lambda L + \gamma \ell &\quad \text{ on }\Gamma, \label{eq:2e}\\
d_P\partial_n P = \chi_{\Gamma_2}(-\eta P + \xi p) &\quad \text{ on }\Gamma, \label{eq:2f}\\
d_p\partial_{n_{\Gamma}}p = 0 &\quad \text{ on }\partial\Gamma_2.\label{eq:2g}
\end{align}
\end{subequations}
The diffusion and reaction parameters are positive constants, and the equations are 
assumed to hold for $t>0$ and are complemented by appropriate initial conditions. 
In the cell-biological context, $L$ and $\ell$ denote the concentrations of native Lgl
within the cell cytoplasm and on the cell cortex, while $P$ and $p$ denote the corresponding phosphorylated Lgl conformations.

Let us point out that like the model problem \eqref{eq:1a}--\eqref{eq:1c}, also the system \eqref{eq:2a}--\eqref{eq:2g} involves fully reversible reactions and mass transfer processes;
see Figure~\ref{fig:dynamics} for a schematic sketch.
\begin{figure}[!ht]
\begin{center}
\begin{tikzpicture}
\node (x) at (0,0) {$L$}  node (y) at (0,-2) {$\ell$};
\draw[arrows=->] ([xshift =0.5mm,yshift=0.5mm]y.north) -- node [right] {\scalebox{.8}[.8]{$\gamma$}} ([xshift =0.5mm,yshift=-0.5mm]x.south);
\draw[arrows=->] ([xshift =-0.5mm,yshift=-0.5mm]x.south) -- node [left] {\scalebox{.8}[.8]{$\lambda$}} ([xshift =-0.5mm,yshift=0.5mm]y.north);
\end{tikzpicture}
\qquad\qquad
\begin{tikzpicture}
\node (a) at (0,0) {$L$} node (b) at (2,0) {$P$} node (c) at (0,-2) {$\ell$} node (d) at (2,-2)  {$p$};
\draw[arrows=->] ([xshift =0.5mm,yshift=0.5mm]a.east) -- node [below] {\scalebox{.8}[.8]{$\alpha$}} ([xshift =-0.5mm,yshift=0.5mm]b.west);
\draw[arrows=->] ([xshift =-0.5mm,yshift=-0.5mm]b.west) -- node [above] {\scalebox{.8}[.8]{$\beta$}} ([xshift =0.5mm,yshift=-0.5mm]a.east);
\draw[arrows=->] ([xshift =0.5mm,yshift=0.5mm]c.north) -- node [right] {\scalebox{.8}[.8]{$\gamma$}} ([xshift =0.5mm,yshift=-0.5mm]a.south);
\draw[arrows=->] ([xshift =-0.5mm,yshift=-0.5mm]a.south) -- node [left] {\scalebox{.8}[.8]{$\lambda$}} ([xshift =-0.5mm,yshift=0.5mm]c.north);
\draw[arrows=->] ([xshift =0.5mm, yshift=-0.5mm]c.east) -- node [below] {\scalebox{.8}[.8]{$\sigma$}} ([xshift=-0.5mm, yshift=-0.5mm]d.west);
\draw[arrows=->] ([xshift =-0.5mm, yshift=0.5mm]d.west) -- node [above] {\scalebox{.8}[.8]{$\kappa$}} ([xshift=0.5mm, yshift=0.5mm]c.east);
\draw[arrows=->] ([xshift = 0.5mm, yshift =0.5mm]d.north) -- node [right] {\scalebox{.8}[.8]{$\xi$}} ([xshift = 0.5mm, yshift=-0.5mm]b.south);
\draw[arrows=->] ([xshift = -0.5mm, yshift =-0.5mm]b.south) -- node [left] {\scalebox{.8}[.8]{$\eta$}} ([xshift = -0.5mm, yshift=0.5mm]d.north);
\end{tikzpicture} 
 \caption{Mass transfer dynamics for model \eqref{eq:1a}--\eqref{eq:1c} (left) and the cell-biologically inspired model \eqref{eq:2a}--\eqref{eq:2g} (right).
The top line represents the volume concentrations and the bottom line those on the surface. All reactions and mass transfer processes are reversible with constant reaction coefficients. }\label{fig:dynamics}
\end{center}
\end{figure}
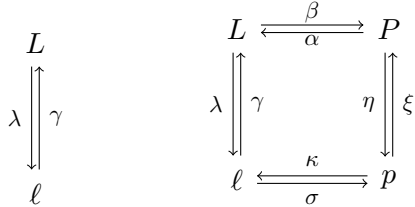

\noindent
We will show that the key properties (i)--(iv) also hold for the system \eqref{eq:2a}--\eqref{eq:2g}, i.e. solutions remain non-negative, the total mass is conserved, there exists a unique positive detailed balance equilibrium, and the concentrations converge exponentially fast to equilibrium. These assertions will be verified by minor modifications of the arguments already used for the analysis of the model problem \eqref{eq:1a}--\eqref{eq:1c}.
As a consequence, also the discretization strategy can be extended in a straight forward manner yielding approximations of optimal order and uniform in time.

\medskip 

The remainder of the manuscript is organized as follows:
Section~\ref{sec:analysis} is devoted to the analysis of problem~\eqref{eq:1a}--\eqref{eq:1c} on the continuous level. 
%
%
In Section~\ref{sec:notation}, we introduce the basic notation for the finite element discretization and recall some results about domain approximations. 
In Section~\ref{sec:semi}, 
we analyze the semi-discretization in space.
%
%
Section~\ref{sec:time} is concerned with the time discretization by the implicit Euler method. 
%
%
The validity of the theoretical results is illustrated by some numerical tests in Section~\ref{sec:num},
and in Section~\ref{sec:extension} we show how to generalize our arguments to the model for asymmetric stem cell division mentioned above.
%
The presentation closes with a short summary of our results and a discussion of open problems.


\section{Analysis of the model problem} \label{sec:analysis}

Let us start with introducing the relevant notation and basic assumptions and give a precise 
statement of our model problem. 

\subsection{Preliminaries}
Let $\Omega \subset\RR^n$, $n=2$ or $3$, be a bounded domain with sufficiently smooth boundary, say $\Gamma=\partial\Omega \in C^3$.
We consider the volume-surface reaction-diffusion system
\begin{subequations}\label{eq:3}
\begin{align}
\dt L - d_L \Delta L = 0, & \qquad x \in \Omega, \ t > 0, \label{eq:3a} \\
\dt \ell - d_\ell \Delta_\Gamma \ell = \lambda L - \gamma \ell, & \qquad x \in \Gamma, \ t>0, \label{eq:3b} 
\end{align}
subject to the interface and initial conditions
\begin{align}
d_L \partial_n L = \gamma \ell - \lambda L, \qquad &  x \in \Gamma, \ t>0, \label{eq:3c} \\
L(0)=L^0, \quad x \in \Omega,  \qquad 
&\ell(0)=\ell^0,  \quad x \in \Gamma. \label{eq:3d} 
\end{align}
\end{subequations}
The diffusion coefficients $d_L$, $d_\ell$ and the reaction constants $\lambda$, $\gamma$ are assumed to be positive constants.
We use standard notation for function spaces \cite{Evans} and we denote by $(\cdot,\cdot)_\Omega$ and $(\cdot,\cdot)_{\Gamma}$ the inner products and by $\|\cdot\|_\Omega$ and $\|\cdot\|_{\Gamma}$ the induced norms of $L^2(\Omega)$ and $L^2(\Gamma)$, respectively. 

\subsection{Existence, uniqueness, and regularity}

Let us briefly discuss the existence and uniqueness of weak solutions and their qualitative properties which we shall need later on.

\begin{problem}[Weak formulation]\label{prob:weak}
Let $T > 0$ be fixed. 
Find 
$$
(L,\ell)\in C([0,T];L^2(\Omega) \times L^2(\Gamma)) \cap L^2(0,T;H^1(\Omega) \times H^1(\Gamma))
$$ 
such that $L(0)=L^0$, $\ell(0)=\ell^0$, and 
\begin{align}\label{eq:weak}  
c(\dt L(t),\dt \ell(t);v,w)+ a(L(t),\ell(t);v,w) = 0
\end{align}
for all $v \in H^1(\Omega)$ and $w \in H^1(\Gamma)$, and for a.e. $0<t<T$. 
The bilinear forms are given by
\begin{align} \label{eq:a}
a(L,\ell;v,w) 
&:=  d_L (\nabla L, \nabla v)_{\Omega} 
  +  d_{\ell} (\nabla_{\Gamma} \ell, \nabla_{\Gamma} w)_{\Gamma} 
  + ( \lambda L - \gamma \ell, v - w)_{\Gamma},
\end{align} 
and by $c(L,\ell;v,w)=(L,v)_\Omega+(\ell,w)_\Gamma$, respectively. 
\end{problem}
%
%
This problem fits into the abstract framework of parabolic evolution equations as 
discussed, for instance, in \cite{DautrayLions5,Evans} and can be analyzed with standard arguments.
A main ingredient is 

\begin{lemma}[G\r{a}rding inequality] \label{lem:garding}
For all  $v \in H^1(\Omega)$ and $w \in H^1(\Gamma)$ there holds
$$
  a(v,w;v,w) \ge 
 \|v\|^2_{H^1(\Omega} + \| w\|^2_{H^1(\Gamma)} - \eta (\|v\|^2_\Omega + \|w\|_{\Gamma}^2)
$$ 
with a constant $\eta>0$ depending only on the parameters $d_L$, $d_\ell$, $\gamma$, $\lambda$, and on the domain $\Omega$.
\end{lemma}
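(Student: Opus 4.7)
The plan is to rewrite $a(v,w;v,w)$ in expanded form, isolate the indefinite boundary coupling, and absorb it into the diffusion part via Young's inequality combined with a trace estimate. Setting equal arguments in \eqref{eq:a} produces
\begin{align*}
a(v,w;v,w) = d_L\|\nabla v\|_\Omega^2 + d_\ell\|\nabla_\Gamma w\|_\Gamma^2 + \lambda\|v\|_\Gamma^2 + \gamma\|w\|_\Gamma^2 - (\lambda+\gamma)(v,w)_\Gamma.
\end{align*}
The two diffusion summands already control the $H^1$-seminorms on $\Omega$ and $\Gamma$, so only the three boundary contributions, which are a priori indefinite, call for work.

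The next step is to apply Young's inequality, $(\lambda+\gamma)|(v,w)_\Gamma|\le \tfrac{\lambda+\gamma}{2}(\|v\|_\Gamma^2+\|w\|_\Gamma^2)$, to obtain the lower bound
\begin{align*}
a(v,w;v,w) \;\ge\; d_L\|\nabla v\|_\Omega^2 + d_\ell\|\nabla_\Gamma w\|_\Gamma^2 - C_1\bigl(\|v\|_\Gamma^2+\|w\|_\Gamma^2\bigr),
\end{align*}
with $C_1$ depending only on $\lambda,\gamma$. The term $\|w\|_\Gamma^2$ already lives on $\Gamma$ and can be shifted directly into the $\eta\|w\|_\Gamma^2$ on the right-hand side of the claimed estimate. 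The genuinely delicate term is $\|v\|_\Gamma^2$, a trace of a bulk field. Here I would invoke the standard trace inequality: since $\Gamma$ is smooth, for every $\varepsilon>0$ there exists $C_\varepsilon>0$ with $\|v\|_\Gamma^2\le \varepsilon\|\nabla v\|_\Omega^2+C_\varepsilon\|v\|_\Omega^2$ for every $v\in H^1(\Omega)$. Choosing $\varepsilon$ small enough, say so that $C_1\varepsilon\le d_L/2$, absorbs the bad trace into half of the bulk gradient energy and leaves
\begin{align*}
a(v,w;v,w) \;\ge\; \tfrac{d_L}{2}\|\nabla v\|_\Omega^2 + d_\ell\|\nabla_\Gamma w\|_\Gamma^2 - C_2\|v\|_\Omega^2 - C_1\|w\|_\Gamma^2.
\end{align*}

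To finish, I would rewrite $\|\nabla v\|_\Omega^2 = \|v\|_{H^1(\Omega)}^2-\|v\|_\Omega^2$ and similarly for $w$, and collect all residual $L^2$ constants, together with a harmless rescaling accounting for the diffusion prefactors $d_L/2$ and $d_\ell$ multiplying the $H^1$-norms, into a single large $\eta$ depending only on $d_L,d_\ell,\lambda,\gamma$ and the trace constant of $\Omega$. The only nontrivial ingredient is the $\varepsilon$-weighted trace inequality; this is the single place where the geometry of $\Omega$ enters and it is what controls the size of $\eta$. Everything else is routine bookkeeping with Cauchy--Schwarz and Young's inequality, so the main (modest) obstacle is simply that one must tune $\varepsilon$ against the diffusion coefficient $d_L$, not against $1$, in order to keep a strictly positive share of the bulk gradient energy after absorption.
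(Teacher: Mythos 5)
Your proof is correct and is essentially the argument the paper intends: the paper's own proof is a one-line appeal to the definition and Cauchy--Schwarz, and you have simply supplied the details (the expansion, Young's inequality for the cross term, and the $\varepsilon$-weighted trace inequality, which is precisely why $\eta$ depends on the domain $\Omega$ -- though one could also avoid the trace inequality by weighting Young's inequality so that the $\lambda\|v\|_\Gamma^2$ term is cancelled exactly, leaving only a $-C\|w\|_\Gamma^2$ penalty). The one caveat is your final ``harmless rescaling'': since your lower bound carries the prefactors $d_L/2$ and $d_\ell$, the inequality with coefficient exactly $1$ on the $H^1$-norms only follows when these prefactors are at least $1$; as written the lemma really holds with a constant $c=\min(d_L/2,\,d_\ell)$ in front of $\|v\|^2_{H^1(\Omega)}+\|w\|^2_{H^1(\Gamma)}$, which is an imprecision in the paper's statement rather than a gap in your argument and is all that is needed for the subsequent well-posedness results.
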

\begin{proof}
The result follows from the definition of $a(\cdot;\cdot)$ and the Cauchy-Schwarz inequality. 
\end{proof}
Let us emphasize that the bilinear form $a(\cdot;\cdot)$ is not elliptic which can easily be seen by choosing $v$ and $w$ as appropriate constants. Using the G\r{a}rding inequality and standard arguments for parabolic evolution problems, one readily obtains

\begin{lemma}[Well-posedness of the weak formulation] \label{lem:existence}
For any pair of initial data $L^0 \in L^2(\Omega)$ and $\ell^0 \in L^2(\Gamma)$, the problem \eqref{eq:3a}-\eqref{eq:3d} has a unique weak solution 
$(L,\ell)$, and
$$
\sup_{t\in (0,T)} (\|L(t)\|^2_\Omega  +\|\ell(t)\|^2_\Gamma) + \int_0^T (\|L(t)\|_{H^1(\Omega)}^2  +\|\ell(t)\|_{H^1(\Gamma)}^2)\,dt
\le C_T (\|L^0\|^2_{\Omega} + \|\ell^0\|^2_{\Gamma}).
$$
If the initial data are non-negative, then the solution remains non-negative for all time.
\end{lemma}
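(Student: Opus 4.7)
The plan is to cast Problem~\ref{prob:weak} as a linear parabolic evolution equation in the Gelfand triple $V:=H^1(\Omega)\times H^1(\Gamma)\hookrightarrow H:=L^2(\Omega)\times L^2(\Gamma)\hookrightarrow V^*$. Continuity of $a(\cdot;\cdot)$ on $V\times V$ is immediate from Cauchy--Schwarz together with the trace theorem $H^1(\Omega)\to L^2(\Gamma)$, while Lemma~\ref{lem:garding} supplies coercivity of $a+\eta c$ on $V$. Existence and uniqueness of a weak solution then follow from the classical theory of linear parabolic equations with coercive-plus-shift bilinear forms (see e.g.\ \cite{DautrayLions5,Evans}); alternatively, I would perform a Faedo--Galerkin scheme on the eigenbasis of the operator associated with $a+\eta c$, where existence at the discrete level reduces to a linear ODE, the a priori estimate below combined with weak compactness yields a limit, and uniqueness follows by applying the same estimate to the difference of two solutions.

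For the a priori bound I would insert $(v,w)=(L(t),\ell(t))$ into \eqref{eq:weak}, giving
\begin{equation*}
\tfrac{1}{2}\tfrac{d}{dt}\bigl(\|L\|_\Omega^2+\|\ell\|_\Gamma^2\bigr)+a(L,\ell;L,\ell)=0.
\end{equation*}
Lemma~\ref{lem:garding} then implies
\begin{equation*}
\tfrac{1}{2}\tfrac{d}{dt}\bigl(\|L\|_\Omega^2+\|\ell\|_\Gamma^2\bigr)+\|L\|_{H^1(\Omega)}^2+\|\ell\|_{H^1(\Gamma)}^2\le \eta\bigl(\|L\|_\Omega^2+\|\ell\|_\Gamma^2\bigr),
\end{equation*}
and Gronwall's lemma, followed by integration of the gradient terms over $[0,T]$, delivers the claimed estimate with a constant $C_T$ of the form $e^{2\eta T}$.

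For non-negativity, assuming $L^0,\ell^0\ge 0$, I would test \eqref{eq:weak} with $v=-L^-$ and $w=-\ell^-$, where $u^-:=\max(-u,0)$; since $L\in L^2(0,T;H^1(\Omega))$ the truncation $L^-$ lies in the same space, and the chain-rule identities $L\,L^-=-(L^-)^2$ and $\nabla L\cdot\nabla L^-=-|\nabla L^-|^2$ hold almost everywhere (analogously for $\ell$). The diffusion and time-derivative terms produce the usual non-negative contributions, and expansion of the coupling $(\lambda L-\gamma\ell,\ell^--L^-)_\Gamma$ via $L=L^+-L^-$, $\ell=\ell^+-\ell^-$ yields
\begin{equation*}
\lambda(L^+,\ell^-)_\Gamma+\gamma(\ell^+,L^-)_\Gamma+\lambda\|L^-\|_\Gamma^2+\gamma\|\ell^-\|_\Gamma^2-(\lambda+\gamma)(L^-,\ell^-)_\Gamma,
\end{equation*}
whose first four summands are non-negative while the last is controlled by $\tfrac{\lambda+\gamma}{2}(\|L^-\|_\Gamma^2+\|\ell^-\|_\Gamma^2)$ via Young's inequality. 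A standard trace-interpolation inequality $\|u\|_\Gamma^2\le\varepsilon\|\nabla u\|_\Omega^2+C_\varepsilon\|u\|_\Omega^2$ lets me absorb the surviving boundary norm of $L^-$ into $d_L\|\nabla L^-\|_\Omega^2$; the remaining differential inequality $\tfrac{d}{dt}(\|L^-\|_\Omega^2+\|\ell^-\|_\Gamma^2)\le C(\|L^-\|_\Omega^2+\|\ell^-\|_\Gamma^2)$, combined with the initial identities $L^-(0)=\ell^-(0)=0$, forces $L^-\equiv\ell^-\equiv 0$ by Gronwall.

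The main technical obstacle is the rigorous justification of the use of $L^-$ and $\ell^-$ as test functions, since the time derivative in \eqref{eq:weak} acts only as an element of $V^*$ and the truncation $u\mapsto u^-$ is a nonlinear Lipschitz operation. I would handle this in the standard way either by invoking Stampacchia's chain rule for the map $t\mapsto\tfrac{1}{2}\|L^-(t)\|_\Omega^2$, or by carrying out the truncation computation at the Galerkin level (where all quantities are classically differentiable in $t$) and then passing to the limit using the weak convergence supplied by the energy estimate.
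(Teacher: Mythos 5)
Your proposal is correct and, for existence, uniqueness, and the a priori bound, it is simply an expanded version of what the paper does: the paper's proof consists of a citation to the standard parabolic theory (Dautray--Lions, Ch.~XVIII), which is exactly the Gelfand-triple/G\r{a}rding framework you spell out, and your energy estimate with $C_T\sim e^{2\eta T}$ is the standard consequence of testing with the solution and applying Lemma~\ref{lem:garding} and Gronwall. The one genuine divergence is the non-negativity part: the paper asserts positivity ``by an iteration argument'' (a monotone/fixed-point iteration that preserves the positive cone at each step), whereas you use the Stampacchia truncation method, testing with $(-L^-,-\ell^-)$. Your sign bookkeeping for the coupling term is right --- the cross terms $\lambda(L^+,\ell^-)_\Gamma$ and $\gamma(\ell^+,L^-)_\Gamma$ come out with the good sign precisely because both reaction rates are positive, which is the structural reason the system preserves positivity --- and the absorption of the residual $\|L^-\|_\Gamma^2$ via the trace-interpolation inequality into $d_L\|\nabla L^-\|_\Omega^2$ closes the Gronwall argument. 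The truncation route has the advantage of being self-contained and of exhibiting the quantitative mechanism; the iteration route generalizes more readily to systems where testing with negative parts produces cross terms of the wrong sign.

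One small caveat on your final paragraph: the fallback of ``carrying out the truncation at the Galerkin level'' does not work as stated, because $L_m^-$ is generally not an element of the finite-dimensional Galerkin subspace and hence is not an admissible test function there. Your primary justification --- Stampacchia's chain rule for $t\mapsto\tfrac12\|L^-(t)\|_\Omega^2$, which is legitimate since $L\in L^2(0,T;H^1(\Omega))$ with $\partial_t L\in L^2(0,T;H^1(\Omega)^*)$ (and analogously on $\Gamma$) --- is the correct way to make the computation rigorous, so the proof stands; just drop or repair the Galerkin alternative.
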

\begin{proof}
Existence and uniqueness of a global weak solution follow by standard results, see e.g., \cite[Chapter XVIII]{DautrayLions5}, and positivity can be established by an iteration argument.
\end{proof}

From the abstract analysis one obtains a-priori estimates with a constant $C_T$ that eventually increases with $T$. We will later show by entropy arguments that the bounds are actually uniform in time. 
%
A basic ingredient for our analysis will be the fact that the total mass is conserved during the evolution of our system. 

\begin{lemma}[Mass conservation] \label{lem:properties}
Let $(L,\ell)$ denote a weak solution of \eqref{eq:3a}--\eqref{eq:3d} and 
denote by 
\begin{align*} 
M(t)=(L(t),1)_\Omega + (\ell(t),1)_{\Gamma} 
\end{align*}
the total mass at time $t$. Then, 
$$
M(t)=M(0)=:M^0
\quad \text{for all}\quad t>0.
$$
\end{lemma}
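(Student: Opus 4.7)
The plan is to take the constant functions $v\equiv 1\in H^1(\Omega)$ and $w\equiv 1\in H^1(\Gamma)$ as test functions in the weak formulation \eqref{eq:weak} and exploit the fact that all spatial derivative and coupling contributions vanish for this particular choice.

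More precisely, I would first note that the bilinear form $a$ evaluated at $(L,\ell;1,1)$ collapses completely: the diffusive terms $d_L(\nabla L,\nabla 1)_\Omega$ and $d_\ell(\nabla_\Gamma\ell,\nabla_\Gamma 1)_\Gamma$ vanish because $\nabla 1=0$ and $\nabla_\Gamma 1=0$, and the exchange contribution $(\lambda L-\gamma\ell,1-1)_\Gamma$ vanishes because the test functions agree on $\Gamma$. This is the crucial structural observation: constants lie in the kernel of $a(\cdot,\cdot;v,w)$ when $v_{|\Gamma}=w$, which is precisely what encodes the mass-transfer balance $d_L\partial_n L=\gamma\ell-\lambda L$ at the interface.

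Substituting these choices into \eqref{eq:weak} yields
\begin{equation*}
(\partial_t L(t),1)_\Omega+(\partial_t\ell(t),1)_\Gamma=0 \qquad \text{for a.e. } t>0.
\end{equation*}
Since $(L,\ell)\in C([0,T];L^2(\Omega)\times L^2(\Gamma))$ by Lemma~\ref{lem:existence}, the mapping $t\mapsto M(t)=(L(t),1)_\Omega+(\ell(t),1)_\Gamma$ is absolutely continuous, and the displayed identity means exactly that $M'(t)=0$ a.e., so $M(t)\equiv M(0)$ for all $t\ge 0$.

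There is no real obstacle here; the only point that deserves a brief remark is the justification that the weak time derivative can be paired with the constant test function to recover $\frac{d}{dt}M$, which is standard given the regularity provided by Lemma~\ref{lem:existence} and can alternatively be circumvented by integrating the weak identity in time from $0$ to $t$ and invoking the initial condition \eqref{eq:3d}.
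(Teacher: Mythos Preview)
Your proof is correct and follows essentially the same approach as the paper: test the weak formulation with $v\equiv 1$ and $w\equiv 1$, observe that $a(L,\ell;1,1)=0$ because the gradient terms and the exchange term $(\lambda L-\gamma\ell,1-1)_\Gamma$ all vanish, deduce $\tfrac{d}{dt}M(t)=0$, and integrate in time. Your additional remarks on why each term of $a$ vanishes and on the regularity needed to differentiate $M$ are a welcome bit of extra care but do not change the argument.
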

\begin{proof}
Testing the weak form \eqref{eq:weak} with $v \equiv 1$ and $w \equiv 1$, we get 
\begin{align*}
\tfrac{d}{dt} M(t) 
= \tfrac{d}{dt} (L(t), 1)_\Omega + \tfrac{d}{dt} (\ell(t),1)_{\Gamma} 
= c(\dt L(t),\dt \ell(t);1,1)
= -a(L(t),\ell(t);1,1)  = 0,
\end{align*}
and the result follows by integration with respect to time.
\end{proof}

\subsection{Equilibrium system}

A reaction-diffusion system like \eqref{eq:3a}--\eqref{eq:3c} can be assumed to eventually tend to equilibrium on the long run. For the problem under consideration, the equilibrium concentrations $L^\infty$, $\ell^\infty$ satisfy the stationary system
\begin{subequations} \label{eq:5}
\begin{align}
- d_L \Delta L^\infty = 0 \qquad \qquad \ \; &\qquad \text{in } \Omega, \label{eq:5a}\\
- d_\ell \Delta_\Gamma \ell^\infty = \lambda L^\infty - \gamma \ell^\infty & \qquad \text{on } \Gamma, \label{eq:5b}  \\
d_L \partial_n L^\infty = \gamma \ell^\infty - \lambda L^\infty & \qquad \text{on } \Gamma. \label{eq:5c}
\end{align}
Because of the mass conservation, we additionally know that 
\begin{align} \label{eq:5d}
(L^\infty,1)_\Omega + (\ell^\infty,1)_{\Gamma} &= M^0, 
\end{align}
\end{subequations}
where $M^0=\int_\Omega L^0 + \int_\Gamma \ell^0$ is the total initial mass of the system.
Note that this extra condition is already required to ensure the uniqueness of the equilibrium state. 
Using the above notation, the weak form of the equilibrium system is given by
\begin{problem}[Equilibrium system] \label{prob:equilibrium}
Find $L^\infty \in H^1(\Omega)$ and $\ell^\infty \in H^1(\Gamma)$ 
such that
\begin{align}\label{eq:statweak}
a(L^\infty,\ell^\infty ; v,w) = 0,
\end{align}
for all $v \in H^1(\Omega)$ and $w \in H^1(\Gamma)$ and such that the mass constraint \eqref{eq:5d} is satisfied.
\end{problem}

For showing well-posedness of the equilibrium problem, we will utilize the following 
identity.
\begin{lemma}[Inf-sup stability]
For any $v \in H^1(\Omega)$ and $w \in H^1(\Gamma)$ there holds 
\begin{align}\label{eq:infsup}
a(v,w;\lambda v,\gamma w) 
&= \lambda d_L \|\nabla v\|_{\Omega}^2 + \gamma d_\ell \|\nabla_{\Gamma} w\|_{\Gamma} + \|\lambda v - \gamma w\|^2_{\Gamma}.
\end{align} 
\end{lemma}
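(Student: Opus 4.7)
The identity is a direct algebraic consequence of the definition of the bilinear form $a(\cdot,\cdot;\cdot,\cdot)$ in \eqref{eq:a}, so my plan is simply to substitute $(L,\ell) = (v,w)$ and $(v,w) \mapsto (\lambda v, \gamma w)$ into the defining expression and collect terms. The only subtle point is in the boundary contribution, where the pair chosen as test functions is exactly matched to the coefficients $\lambda, \gamma$ in order to turn the coupling term into a perfect square.

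Concretely, I would first factor the constants $\lambda$ and $\gamma$ out of the diffusion terms via linearity of the inner products,
\begin{equation*}
d_L(\nabla v, \nabla(\lambda v))_\Omega = \lambda d_L \|\nabla v\|_\Omega^2, \qquad d_\ell(\nabla_\Gamma w, \nabla_\Gamma(\gamma w))_\Gamma = \gamma d_\ell \|\nabla_\Gamma w\|_\Gamma^2,
\end{equation*}
which produces the first two terms on the right-hand side of \eqref{eq:infsup}. Next I would observe that the reaction/coupling term $(\lambda L - \gamma \ell, v - w)_\Gamma$ evaluated with $L=v$, $\ell=w$ and the test functions $(\lambda v, \gamma w)$ becomes
\begin{equation*}
(\lambda v - \gamma w,\, \lambda v - \gamma w)_\Gamma = \|\lambda v - \gamma w\|_\Gamma^2,
\end{equation*}
yielding the last term. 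Summing the three contributions gives the claimed identity.

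There is no real obstacle here: the proof amounts to one substitution and an identification of a square. The only thing worth commenting on is the structural meaning: the multipliers $\lambda$ and $\gamma$ on the test functions are precisely the (detailed balance) weights that render the antisymmetric coupling in $a(\cdot,\cdot;\cdot,\cdot)$ into a non-negative quadratic form, which is why this particular choice, rather than $(v,w)$ itself, produces a coercivity-type bound. This observation is what makes the identity useful for the subsequent well-posedness argument for Problem~\ref{prob:equilibrium}, since combined with a Poincar\'e-type inequality on the mass-constrained subspace it will upgrade \eqref{eq:infsup} to an inf-sup bound and hence ensure uniqueness and existence of the equilibrium state.
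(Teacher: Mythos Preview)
Your proof is correct and is exactly the intended verification: the paper states this lemma without proof, since the identity follows immediately from the definition \eqref{eq:a} by the substitution you describe. Your additional remark on the role of the weights $\lambda,\gamma$ is also accurate and anticipates precisely how the identity is combined with Lemma~\ref{lem:poincare} in the uniqueness proof of Lemma~\ref{lemma:equilibrium}.
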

As we will show next, the right hand side actually defines a norm on the space of functions with zero total mass. 
The following Poincar\'{e}-type inequality will also play a crucial role for the analysis of the time-dependent problem.
\begin{lemma}[Poincar\'{e}-type inequality] \label{lem:poincare}
There exists a constant $C_P>0$ depending only on the parameters $d_L$, $d_\ell$, $\lambda$, $\gamma$, and on the domain $\Omega$, such that 
\begin{align}\label{eq:poincare}
\lambda \|L\|^2_{H^1(\Omega)} + \gamma \|\ell\|^2_{H^1(\Gamma)}
\le C_P \left( \lambda d_L\| \nabla L\|^2_\Omega + \gamma d_\ell \|\nabla_\Gamma \ell\|^2_{\Gamma} + \|\lambda L - \gamma \ell\|^2_{\Gamma}\right)
\end{align}
for all $L \in H^1(\Omega)$ and $\ell \in H^1(\Gamma)$ satisfying the mass constraint $(L,1)_\Omega + (\ell,1)_{\Gamma}=0$.
\end{lemma}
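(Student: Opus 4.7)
The plan is to establish \eqref{eq:poincare} by a standard compactness/contradiction argument, exploiting the fact that the right-hand side controls all gradients and an $L^2$-trace coupling, while the mass constraint rules out the nontrivial constant kernel.

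First, I would suppose that no such constant $C_P$ exists. Then there is a sequence $(L_n,\ell_n) \in H^1(\Omega)\times H^1(\Gamma)$ satisfying the mass constraint $(L_n,1)_\Omega + (\ell_n,1)_\Gamma = 0$ and the normalization
\begin{equation*}
\lambda\|L_n\|^2_{H^1(\Omega)} + \gamma\|\ell_n\|^2_{H^1(\Gamma)} = 1
\end{equation*}
for which the right-hand side of \eqref{eq:poincare} tends to zero. In particular,
\begin{equation*}
\|\nabla L_n\|_\Omega \to 0, \qquad \|\nabla_\Gamma \ell_n\|_\Gamma \to 0, \qquad \|\lambda L_n - \gamma \ell_n\|_\Gamma \to 0.
\end{equation*}

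Next, I would extract a subsequence by compactness. The sequence $(L_n,\ell_n)$ is bounded in $H^1(\Omega)\times H^1(\Gamma)$, so after passing to a subsequence, $L_n \rightharpoonup L^\ast$ in $H^1(\Omega)$ and $\ell_n\rightharpoonup \ell^\ast$ in $H^1(\Gamma)$, and by the Rellich–Kondrachov theorem and compactness of the trace operator $H^1(\Omega)\to L^2(\Gamma)$, the convergence is strong in $L^2(\Omega)$, $L^2(\Gamma)$, and also for the trace of $L_n$ on $\Gamma$. Since the gradients go to zero, the limits $L^\ast$ and $\ell^\ast$ are constants (using the assumed connectedness of $\Omega$ and $\Gamma$, which follows from the $C^3$ smoothness assumption).

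Then I would pin down $(L^\ast,\ell^\ast)$. The condition $\|\lambda L_n - \gamma \ell_n\|_\Gamma \to 0$ passes to the limit to give $\lambda L^\ast = \gamma \ell^\ast$ on $\Gamma$, while the mass constraint yields $L^\ast|\Omega| + \ell^\ast|\Gamma| = 0$. Solving these two linear equations shows $L^\ast = \ell^\ast = 0$. Finally, the strong $L^2$ convergence together with $\|\nabla L_n\|_\Omega, \|\nabla_\Gamma\ell_n\|_\Gamma \to 0$ gives $L_n \to 0$ in $H^1(\Omega)$ and $\ell_n\to 0$ in $H^1(\Gamma)$, contradicting the normalization.

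The only subtle point is the strong convergence of the traces on $\Gamma$, which is needed to pass to the limit in $\|\lambda L_n -\gamma \ell_n\|_\Gamma$; this follows from compactness of the trace $H^1(\Omega)\hookrightarrow L^2(\Gamma)$ and the convergence of $\ell_n$ in $L^2(\Gamma)$. Everything else is standard.
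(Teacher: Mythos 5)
Your proof is correct and is essentially the paper's argument: the paper identifies the same trivial kernel (constants with $\lambda L=\gamma\ell$, eliminated by the mass constraint) and then invokes the lemma of equivalent norms from Tartar, whose proof is precisely the compactness/contradiction argument you write out. The only small inaccuracy is that $C^3$ regularity does not force $\Gamma=\partial\Omega$ to be connected (think of an annulus), but this is harmless: $\ell^\ast$ is constant on each component of $\Gamma$, and the limit relation $\lambda L^\ast=\gamma\ell^\ast$ on all of $\Gamma$ forces all of these constants to coincide with $(\lambda/\gamma)L^\ast$, so the kernel argument goes through unchanged.
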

\begin{proof}
The right hand side is zero if, and only if, $L$ and $\ell$ are constant and 
$L=\frac{\gamma}{\lambda} \ell$. Since the parameters $\gamma,\lambda$ are positive, the mass constraint yields $L=\ell=0$. Therefore, the term in parenthesis on the right hand side of \eqref{eq:poincare} defines a norm on $H^1(\Omega)\times H^1(\Gamma)$. 
The uniform estimate is obtained by the \emph{lemma of equivalent norms}; see e.g. \cite[Ch.~11]{Tartar}. 
\end{proof}

A combination of the previous two lemmas already yields the well-posedness of the equilibrium problem. Since the right hand side in \eqref{eq:statweak} is zero, the solution can however even be obtained explicitly here. We will make use of the particular structure of the equilibrium later on.

\begin{lemma}[Equilibrium] \label{lemma:equilibrium}
The system \eqref{eq:5a}--\eqref{eq:5d} has a unique weak solution $(L^\infty,\ell^\infty)$ given by
\begin{align} \label{eq:formulas}
L^\infty = \frac{\gamma M^0}{\gamma |\Omega| + \lambda |\Gamma|}
\qquad \text{and} \qquad 
\ell^\infty 
= \frac{\lambda}{\gamma} L^\infty.
\end{align}
\end{lemma}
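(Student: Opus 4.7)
The plan is to prove the lemma in two short steps: existence by direct verification of the explicit formulas, and uniqueness by combining the inf-sup identity \eqref{eq:infsup} with the Poincar\'{e}-type inequality of Lemma~\ref{lem:poincare}.

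First, I would verify that the constants $L^\infty$ and $\ell^\infty$ given in \eqref{eq:formulas} solve Problem~\ref{prob:equilibrium}. Since both are constant functions, the gradient terms $\nabla L^\infty$ and $\nabla_\Gamma \ell^\infty$ vanish identically, so only the coupling term in the definition \eqref{eq:a} of $a(\cdot;\cdot)$ survives. A direct computation gives $\lambda L^\infty - \gamma \ell^\infty = \lambda L^\infty - \gamma \cdot \tfrac{\lambda}{\gamma} L^\infty = 0$, so \eqref{eq:statweak} holds for every $v \in H^1(\Omega)$ and $w \in H^1(\Gamma)$. The mass constraint \eqref{eq:5d} is then a one-line check:
\begin{align*}
(L^\infty,1)_\Omega + (\ell^\infty,1)_\Gamma = L^\infty\bigl(|\Omega| + \tfrac{\lambda}{\gamma}|\Gamma|\bigr) = \frac{\gamma M^0}{\gamma|\Omega|+\lambda|\Gamma|}\cdot \frac{\gamma|\Omega|+\lambda|\Gamma|}{\gamma} = M^0,
\end{align*}
which fixes the correct normalization of the constants.

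For uniqueness, I would take two solutions $(L_1^\infty,\ell_1^\infty)$ and $(L_2^\infty,\ell_2^\infty)$ of Problem~\ref{prob:equilibrium} and consider their differences $L := L_1^\infty - L_2^\infty$ and $\ell := \ell_1^\infty - \ell_2^\infty$. By linearity of $a(\cdot;\cdot)$ and the mass constraint, these differences satisfy $a(L,\ell;v,w) = 0$ for all test functions $v,w$, and $(L,1)_\Omega + (\ell,1)_\Gamma = 0$. Testing with $v = \lambda L$ and $w = \gamma \ell$ and invoking the identity \eqref{eq:infsup} yields
\begin{align*}
\lambda d_L \|\nabla L\|_\Omega^2 + \gamma d_\ell \|\nabla_\Gamma \ell\|_\Gamma^2 + \|\lambda L - \gamma \ell\|_\Gamma^2 = 0.
\end{align*}
Since the differences have zero total mass, Lemma~\ref{lem:poincare} applies and forces $\|L\|_{H^1(\Omega)} = \|\ell\|_{H^1(\Gamma)} = 0$, hence $L_1^\infty = L_2^\infty$ and $\ell_1^\infty = \ell_2^\infty$.

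I do not expect any serious obstacle here: the two preceding lemmas were tailored exactly to produce this conclusion, and the existence part reduces to verifying that the ansatz of constant concentrations with the detailed balance ratio $\ell^\infty/L^\infty = \lambda/\gamma$ is consistent with the conserved total mass. The only point worth mentioning explicitly in the proof is that mass conservation \eqref{eq:5d} is essential for uniqueness, since without it the space of constant pairs $(c_1,c_2)$ with $\lambda c_1 = \gamma c_2$ forms a one-dimensional kernel of $a(\cdot;\cdot)$.
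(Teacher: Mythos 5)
Your proof is correct and follows essentially the same route as the paper: existence by direct verification of the constant ansatz (with the detailed balance ratio killing the coupling term and the normalization matching the mass constraint), and uniqueness by applying the inf-sup identity \eqref{eq:infsup} to the zero-mass difference of two solutions and concluding via Lemma~\ref{lem:poincare}. No gaps; your write-up is just a more detailed version of the paper's argument.
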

\begin{proof}
One easily verifies that $(L^\infty,\ell^\infty)$ given by the formulas above is a solution 
of \eqref{eq:5a}--\eqref{eq:5c} satisfying the mass constraint \eqref{eq:5d}.
Now assume that $(L^*,\ell^*)$ is any other weak solution to \eqref{eq:5a}--\eqref{eq:5d}. 
Then, the difference $(L^{\infty} - L^*, \ell^{\infty} -\ell^*)$ has zero total mass, and 
\begin{align}
a(L^{\infty} - L^*, \ell^{\infty} -\ell^*; v, w) = 0
\end{align}
for all $v\in H^1(\Omega)$ and $w\in H^1(\Gamma)$. By \eqref{eq:infsup} we thus get
\begin{align}
\lambda d_L  \|\nabla (L^{\infty} - L^*)\|^2_\Omega + \gamma d_\ell \|\nabla_\Gamma (\ell^{\infty}-\ell^*)\|_{\Gamma}^2 + \|\lambda (L^{\infty} - L^*)- \gamma (\ell^{\infty}-\ell^*) \|_{\Gamma}^2 = 0,
\end{align}
and Lemma \ref{lem:poincare} implies that $L^{\infty} = L^*$ and $\ell^{\infty} = \ell^*$. 
This shows the uniqueness.
\end{proof}

\subsection{Convergence to equilibrium}

We will now show that the solution $(L(t),\ell(t))$ converges to the equilibrium $(L^\infty,\ell^\infty)$ exponentially fast by using the entropy method. 
For a given constant equilibrium state $(L^\infty,\ell^\infty)$, we define the 
quadratic relative entropy functional
\begin{align}\label{eq:entropy}
E(L,\ell) = \frac{1}{2} \left( \lambda \|L-L^\infty\|_\Omega^2 + \gamma \|\ell-\ell^\infty\|_\Gamma^2 \right),
\end{align}
which is the square of a scaled $L^2$-distance to the equilibrium in the product space $L^2(\Omega) \times L^2(\Gamma)$.

\begin{lemma}[Entropy dissipation]\label{lem:entropy}
Let $(L, \ell)$ denote a weak solution of problem \eqref{eq:3a}--\eqref{eq:3d} with corresponding constant equilibrium state $(L^\infty,\ell^\infty)$. 
Then 
\begin{align}\label{entropydissipation}
\tfrac{d}{dt}E(L(t),\ell(t))
& = -d_L \lambda \|\nabla (L(t)-L^\infty)\|_\Omega^2 
- d_\ell \gamma \|\nabla_\Gamma (\ell(t) - \ell^\infty)\|^2_{\Gamma}\\
&\qquad  \qquad - \|\lambda (L(t) - L^\infty) - \gamma (\ell(t) - \ell^\infty)\|^2_{\Gamma}=:-D(L(t),\ell(t)) \notag
\end{align}
for all $t>0$. The functional $D(\cdot)$ is called the \emph{entropy dissipation}.
\end{lemma}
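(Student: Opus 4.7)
The plan is to reduce the entropy identity to a direct application of the weak formulation with a carefully chosen test pair. First I would write the chain rule for the quadratic entropy, formally obtaining
$$
\tfrac{d}{dt}E(L(t),\ell(t)) = \lambda\,(\partial_t L(t),L(t)-L^\infty)_\Omega + \gamma\,(\partial_t\ell(t),\ell(t)-\ell^\infty)_\Gamma.
$$
This is exactly $c(\partial_t L,\partial_t\ell;\lambda(L-L^\infty),\gamma(\ell-\ell^\infty))$, so it suggests testing the weak formulation \eqref{eq:weak} with $v=\lambda(L(t)-L^\infty)$ and $w=\gamma(\ell(t)-\ell^\infty)$. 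Since $L^\infty$ and $\ell^\infty$ are constants, these test functions lie in $H^1(\Omega)$ and $H^1(\Gamma)$ respectively, so the choice is admissible.

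Next I would use that the equilibrium satisfies the stationary weak equation $a(L^\infty,\ell^\infty;v,w)=0$ (Problem \ref{prob:equilibrium}), so by linearity of $a(\cdot,\cdot;\cdot,\cdot)$ in its first pair of arguments,
$$
a(L(t),\ell(t);v,w) = a(L(t)-L^\infty,\ell(t)-\ell^\infty;v,w).
$$
Plugging in the weak formulation and the chosen test pair gives
$$
\tfrac{d}{dt}E(L,\ell) = -a\bigl(L-L^\infty,\ell-\ell^\infty;\lambda(L-L^\infty),\gamma(\ell-\ell^\infty)\bigr).
$$
The inf–sup identity \eqref{eq:infsup} applied to the pair $(L-L^\infty,\ell-\ell^\infty)$ then produces precisely the three squared terms on the right-hand side of \eqref{entropydissipation}, which is the claimed dissipation formula.

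The only non-cosmetic obstacle is justifying the chain rule rigorously: the weak solution only satisfies $\partial_t L\in L^2(0,T;H^1(\Omega)^*)$ a priori, so one must interpret the time derivative of $t\mapsto\tfrac12\lambda\|L(t)-L^\infty\|_\Omega^2$ via the standard duality formula $\tfrac{d}{dt}\tfrac12\|u\|^2=\langle\partial_t u,u\rangle$ valid for $u\in L^2(0,T;H^1)$ with $\partial_t u\in L^2(0,T;(H^1)^*)$, and analogously for the surface component. This is a routine application of \cite[Ch.~XVIII]{DautrayLions5}, using that the regularity class of Problem \ref{prob:weak} provides $L-L^\infty$ and $\ell-\ell^\infty$ as admissible test functions pointwise in $t$. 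Once this is set up, the identity in the statement follows for a.e.\ $t>0$.
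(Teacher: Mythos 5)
Your proposal is correct and follows essentially the same route as the paper: the chain rule for the quadratic entropy, testing the weak formulation with $v=\lambda(L-L^\infty)$ and $w=\gamma(\ell-\ell^\infty)$, subtracting the equilibrium equation by linearity, and invoking the inf--sup identity \eqref{eq:infsup}. Your additional remark on justifying $\tfrac{d}{dt}\tfrac12\|u\|^2=\langle\partial_t u,u\rangle$ via the standard duality framework is a careful elaboration of what the paper dismisses as ``elementary manipulations.''
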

Let us emphasize that the entropy $E$ and the entropy dissipation $D$  depend on the equilibrium states $L^\infty$ and $\ell^\infty$. For ease of notation, we do not write this explicitly.
\begin{proof}
By definition of the entropy functional $E(\cdot)$ and elementary manipulations, we obtain
\begin{align*}
\tfrac{d}{dt} E(L(t),\ell(t))
&= \lambda (\dt L(t), L(t)-L^\infty)_\Omega + \gamma (\dt \ell(t), \ell(t)-\ell^\infty)_{\Gamma} 
\\&
=-a(L(t)-L^\infty,\ell(t)-\ell^\infty;\lambda (L(t)-L^\infty),\gamma (\ell(t)-\ell^\infty)) \\
&= - d_L \lambda \|\nabla (L-L^\infty)\|_\Omega^2 
- d_\ell \gamma \|\nabla_\Gamma (\ell - \ell^\infty)\|^2_{\Gamma} - \|\lambda (L - L^\infty) - \gamma (\ell - \ell^\infty)\|^2_{\Gamma},
\end{align*}
which already yields the desired result.
\end{proof}

From the Poincar\'e-type inequality established in Lemma~\ref{lem:poincare}, we may directly deduce
\begin{lemma}[Entropy-entropy dissipation inequality] \label{lem:eed}
For all $L \in H^1(\Omega)$ and $\ell \in H^1(\Gamma)$ satisfying the mass constraint $(L,1)_\Omega+(\ell,1)_\Gamma=M^0$, there holds
\begin{align} \label{eq:eed}
D(L, \ell)\geq c_0 E(L, \ell) \qquad \text{with} \quad c_0=2/C_P. 
\end{align}
\end{lemma}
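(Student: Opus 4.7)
The plan is to reduce the claim to a direct application of the Poincaré-type inequality from Lemma~\ref{lem:poincare}, applied to the shifted functions $\widetilde L := L - L^\infty$ and $\widetilde\ell := \ell - \ell^\infty$.

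First I would verify that $(\widetilde L, \widetilde\ell)$ satisfies the zero-mass constraint required by Lemma~\ref{lem:poincare}. This is immediate: by hypothesis $(L,1)_\Omega + (\ell,1)_\Gamma = M^0$, and by Lemma~\ref{lemma:equilibrium} the equilibrium also satisfies $(L^\infty,1)_\Omega + (\ell^\infty,1)_\Gamma = M^0$, so subtraction gives $(\widetilde L,1)_\Omega + (\widetilde\ell,1)_\Gamma = 0$. Moreover, since $L^\infty$ and $\ell^\infty$ are constants, we have $\nabla \widetilde L = \nabla L$ and $\nabla_\Gamma \widetilde\ell = \nabla_\Gamma \ell$, so the entropy dissipation $D(L,\ell)$ coincides exactly with the right-hand side of \eqref{eq:poincare} (up to the factor $C_P$) evaluated at $(\widetilde L,\widetilde\ell)$.

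Applying Lemma~\ref{lem:poincare} to $(\widetilde L, \widetilde\ell)$ therefore yields
\begin{equation*}
\lambda \|\widetilde L\|_{H^1(\Omega)}^2 + \gamma \|\widetilde\ell\|_{H^1(\Gamma)}^2 \;\le\; C_P\, D(L,\ell).
\end{equation*}
Bounding the $H^1$-norms from below by the corresponding $L^2$-norms and recalling the definition \eqref{eq:entropy} of $E$, the left-hand side is at least $\lambda \|\widetilde L\|_\Omega^2 + \gamma \|\widetilde\ell\|_\Gamma^2 = 2 E(L,\ell)$. Dividing by $C_P$ gives $D(L,\ell) \ge (2/C_P)\,E(L,\ell)$, which is the claim with $c_0 = 2/C_P$.

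I do not anticipate a real obstacle here: the statement is essentially a packaging of Lemma~\ref{lem:poincare}, with the only points to check being the mass-cancellation between $(L,\ell)$ and the equilibrium and the elementary observation that the gradient terms and the coupling term defining $D$ match the right-hand side of \eqref{eq:poincare} verbatim once the constants $L^\infty,\ell^\infty$ are subtracted.
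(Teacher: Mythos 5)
Your proof is correct and is exactly the argument the paper intends: the paper states the lemma as a direct consequence of Lemma~\ref{lem:poincare}, and your write-up simply fills in the (routine) steps of checking the zero-mass condition for $(L-L^\infty,\ell-\ell^\infty)$, matching $D$ with the right-hand side of \eqref{eq:poincare}, and dropping the $H^1$-norms to $L^2$-norms to recover $2E$.
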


A combination of the previous estimates now immediately yields

\begin{theorem}[Exponential stability]\label{thrm:equilibrium}
Let $(L,\ell)$ be the weak solution of the system \eqref{eq:3a}--\eqref{eq:3c} and 
$(L^\infty,\ell^\infty)$ denote the corresponding equilibrium.
Then, 
\begin{align*}  
\|L(t) - L^\infty\|^2_\Omega + \|\ell(t) - \ell^\infty\|^2_{\Gamma} \le C e^{- c_0 t} \left( \|L^0 - L^\infty\|^2_\Omega + \|\ell^0 - \ell^\infty\|^2_{\Gamma}\right)  
\end{align*}
for all $t > 0$ with constants $c_0,C>0$ depending only  $d_L$, $d_\ell$, $\lambda,\gamma$,
and on the domain $\Omega$. 
\end{theorem}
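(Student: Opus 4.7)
The plan is to combine the three preceding lemmas in a straightforward Grönwall argument, so there is essentially no obstacle, only bookkeeping to be done.

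First, I would observe that the difference $(L(t)-L^\infty,\ell(t)-\ell^\infty)$ satisfies the zero-mass constraint required by the entropy–entropy dissipation inequality: by Lemma~\ref{lem:properties} the total mass of $(L(t),\ell(t))$ equals $M^0$ for all $t>0$, and by construction (see \eqref{eq:5d}) the equilibrium pair $(L^\infty,\ell^\infty)$ also has total mass $M^0$, so the difference has zero total mass. This is the point that must be checked in order to use Lemma~\ref{lem:eed}.

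Next, since $L^\infty,\ell^\infty$ are constants, the pair $(L(t)-L^\infty,\ell(t)-\ell^\infty)$ still lies in $H^1(\Omega)\times H^1(\Gamma)$ for a.e.\ $t$, and Lemma~\ref{lem:entropy} gives
\begin{align*}
\tfrac{d}{dt}E(L(t),\ell(t)) = -D(L(t),\ell(t)).
\end{align*}
Combining this with the entropy–entropy dissipation inequality of Lemma~\ref{lem:eed} applied to the admissible pair $(L(t)-L^\infty,\ell(t)-\ell^\infty)$ (note the definitions of $E$ and $D$ only involve differences with the equilibrium), yields
\begin{align*}
\tfrac{d}{dt}E(L(t),\ell(t)) \le -c_0\, E(L(t),\ell(t)),\qquad t>0.
\end{align*}
A direct application of Grönwall's lemma then gives $E(L(t),\ell(t)) \le e^{-c_0 t} E(L^0,\ell^0)$.

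Finally, I would translate this bound on $E$ back into the statement of the theorem. By the definition \eqref{eq:entropy},
\begin{align*}
\tfrac{1}{2}\min(\lambda,\gamma)\bigl(\|L(t)-L^\infty\|_\Omega^2+\|\ell(t)-\ell^\infty\|_\Gamma^2\bigr) \le E(L(t),\ell(t)),
\end{align*}
and analogously $E(L^0,\ell^0)\le \tfrac{1}{2}\max(\lambda,\gamma)(\|L^0-L^\infty\|_\Omega^2+\|\ell^0-\ell^\infty\|_\Gamma^2)$, so the claimed estimate follows with $C=\max(\lambda,\gamma)/\min(\lambda,\gamma)$, and $c_0 = 2/C_P$ from Lemma~\ref{lem:eed} depends only on $d_L,d_\ell,\lambda,\gamma$ and $\Omega$, as required. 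A minor technical point worth acknowledging is that the computation in Lemma~\ref{lem:entropy} requires enough regularity of the weak solution to justify the test with $\lambda(L-L^\infty)$ and $\gamma(\ell-\ell^\infty)$; this can be handled by a standard density/Steklov averaging argument, which I would simply invoke rather than carry out.
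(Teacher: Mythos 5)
Your proposal is correct and follows essentially the same route as the paper: combine the entropy dissipation identity with the entropy--entropy dissipation inequality, apply Gr\"onwall, and use the equivalence of $E$ with the squared $L^2$-distance to equilibrium. The extra care you take with the zero-mass constraint and the regularity needed to justify the testing is a welcome addition but does not change the argument.
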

\begin{proof}
As a direct consequence of Lemma~\ref{lem:entropy} and \ref{lem:eed}, we get 
\begin{align*}
\tfrac{d}{dt} E(L(t),\ell(t)) \le -c_0 E(L(t),\ell(t)) \quad \text{for all } \quad t > 0.
\end{align*}
Therefore, the classical Gronwall inequality gives 
$$
E(L(t),\ell(t)) \le e^{-c_0 t} E(L^0,\ell^0).
$$ 
The desired result then follows from the fact that $\lambda$, $\gamma$ are positive constants and since the entropy $E(L,\ell)$ is equivalent to the square of the $L^2$-norm distance to equilibrium.
\end{proof}

\begin{remark}
The dependence of the constant $c_0$ on the parameters $\lambda$, $\gamma$, $d_L$, $d_\ell$, and on certain geometric constants can be made explicit, even for some nonlinear problems; see e.g. \cite{FPT}. For linear problems, $c_0$ can also be determined from a generalized eigenvalue problem. 
\end{remark}

\begin{remark}
As a consequence of Theorem~\ref{thrm:equilibrium}, we obtain 
uniform bounds for the solution in $L^\infty(0,\infty;L^2(\Omega) \times L^2(\Gamma))$. 
Since the problem \eqref{eq:3a}--\eqref{eq:3c} is linear and the coefficients are independent of time, one can obtain uniform bounds and even exponential decay also for $(\dt^j L, \dt^j \ell)$, $j \ge 0$ in $L^p(0,\infty;H^k(\Omega) \times H^k(\Gamma))$ provided that certain compatibility conditions hold; we refer to \cite[Section~7]{Evans} for some basic results in this direction.
\end{remark}

\section{Basic notation and domain approximations} \label{sec:notation}

We now introduce the notation needed for the formulation of the finite element approximation of our problem  and recall some basic results about the domain approximation by polyhedral meshes from \cite{Deck,ER}.
For ease of presentation, we will only consider the two dimensional case here. 
All arguments however easily generalize to dimension three.

\subsection{The mesh and domain mappings}
Let $\Omega \subset \RR^2$ be a bounded domain with smooth boundary $\Gamma \in C^3$.
We approximate $\Omega$ by a polygonal domain $\Omega_h$ for which a 
conforming triangulation $T_h=\{T\}$ is available. As usual, we denote by $\rho_T$ and $h_T$ the incircle radius and the diameter of the triangle $T$, respectively, and we call 
$h=\max_T h_T$ the mesh size. We further denote by $E_h=\{e \}$ the partition of the domain boundary $\Gamma$ into edges $e$ inherited from the triangulation $T_h$.
Throughout the subsequent sections, we make use of the following assumptions. 

\medskip 

\noindent
(A1) $T_h$ is $\gamma$-shape-regular, i.e., there exists a constant $\gamma>0$ such that 
\begin{align*} 
\rho_T \le h_T \le \gamma \rho_T, \qquad \text{for all } T \in T_h. 
\end{align*}

\noindent 
(A2) There exists a diffeomorphism $G_h : \Omega_h \to \Omega$ such that the estimates
\begin{align*}
&\|G_h - id\|_{L^\infty(T)} \leq \beta h_T^2, 
&
&\|D G_h - I\|_{L^\infty(T)} \le \beta h_T,
&
&\|\det (D G_h) - 1\|_{L^\infty(T)} \le \beta h_T,
\\
&\|D G_h^{-1}\|_{L^\infty(G_h(T))} \le \beta,  
&
&\|D^2 G_h\|_{L^\infty(T)} \le \beta, 
&
&\|D^2 G_h^{-1}\|_{L^\infty(G_h(T))} \le \beta
\end{align*}
for all $T \in T_h$ and $G_h(x)=x$ for all elements $T$ with $G_h(T) \cap \Gamma=\emptyset$.

\smallskip 
\noindent 
(A3) The induced surface mapping $g_h : \Gamma_h \to \Gamma$ defined by $g_h = G_h |_{\Gamma_h}$ in addition satisfies
\begin{align*}
\|\det(Dg_h)-1\|_{L^\infty(e)} \le \beta h_T^2, \qquad e \in E_h, \ e \subset \partial T. 
\end{align*}

\smallskip
We will consider families of partitions $\{T_h\}_{h>0}$ later on,
and the constants $\gamma,\beta$ are then assumed to be independent of $T_h$,
in particular, of the mesh size. 
An explicit construction of appropriate domain mappings $G_h$ can be found in \cite[Section 4.2]{ER}; see also \cite{Deck_etal09,DE07,Scott73}.

\subsection{Restriction to the discrete domain}
To compare functions defined on $\Omega$ and $\Omega_h$,
we associate to any $u : \Omega \to \RR$ a function
\begin{align} \label{eq:lifting}
\widetilde u := u \circ G_h 
\end{align}
defined on $\Omega_h$ called the \emph{restriction} of $u$ to $\Omega_h$.
Using the above properties of the domain mapping $G_h$, one easily verifies that
\begin{align}\label{eq:Resequi1}
c \|\widetilde u\|_{H^k(T)} \le \|u\|_{H^k(G_h(T))} \le c^{-1} \|\widetilde u\|_{H^k(T)}, 
\end{align}
for all $u \in H^k(G_h(T))$ and $k \le 2$ with a positive constant $c$ that only depends on $\beta$.
%
%
In a similar manner, we define for any function $p : \Gamma \to \RR$ defined on the boundary $\Gamma$ the restriction $\widetilde p$ to the boundary $\Gamma_h$ of the discrete domain by
\begin{align}\label{eq:liftsurface}
\widetilde p := p \circ g_h.
\end{align}
By the chain rule and application of the previous estimates, we readily obtain
\begin{align}\label{eq:Resequi2}
c\|\widetilde p\|_{H^k(e)} \le \|p\|_{H^k(g_h(e))} \le c^{-1} \|\widetilde p\|_{H^k(e)}
\end{align}
for all functions $p \in H^k(e)$ with $e \in E_h$ and integers $k \le 2$.

\section{Semi-discretization in space}\label{sec:semi}

We now investigate the semi-discretization of problem \eqref{eq:3a}--\eqref{eq:3c} in space.
To this end, let 
$$
V_h = \{ v_h \in C(\Omega_h): v_h|_T \in P_1(T) \quad \text{for all } T \in T_h\}
$$ 
be the space of continuous piecewise linear functions over $\Omega_h$, and denote by 
$$
W_h =  \{ w_h : \Gamma_h \to \RR : w_h = v_h|_{\Gamma_h} \quad \text{for some } v_h \in V_h \}
$$ 
the corresponding space of continuous piecewise linear functions on the surface $\Gamma_h$. 
Note that by construction $V_h \subset H^1(\Omega_h)$ and $W_h \subset H^1(\Gamma_h)$. 
Moreover, $W_h$ contains all traces of functions in $V_h$ which we will use later on. 

\subsection{Finite element discretization of the evolution problem}

As approximation of the volume-surface reaction-diffusion system 
defined by \eqref{eq:3a}--\eqref{eq:3d}, we consider 

\begin{problem}[Semi-discretization] \label{prob:1} 
Let $(L^0,\ell^0) \in L^2(\Omega)$ and set $\widetilde L^0=L^0 \circ G_h$ and $\widetilde \ell^0 = \ell^0 \circ g_h$. 
Find $(L_h,\ell_h) \in H^1(0,T;V_h \times W_h)$ such that
\begin{align*}
(L_h(0),v_h)_{\Omega_h}=(\widetilde L^0,v_h)_{\Omega_h} 
\qquad  \text{and} \qquad 
(\ell_h(0),w_h)_{\Gamma_h} = (\widetilde \ell^0,w_h)_{\Gamma_h},
\end{align*}
for all $v_h \in V_h$, $w_h \in W_h$ and such that 
\begin{align} \label{eq:discrete}
c_h(\dt L_h(t),\dt \ell_h(t), v_h,w_h) + a_h(L_h,\ell_h;v_h,w_h) &= 0 
\end{align}
holds for all $v_h \in V_h$ and $w_h \in W_h$, and for all $0 < t \le T$.
The bilinear forms are given by 
\begin{align}\label{disbilinear}
a_h(L,\ell;v,w)  = d_L (\nabla L, \nabla v)_{\Omega_h} + d_\ell (\nabla_{\Gamma_h} \ell, \nabla_{\Gamma_h} w)_{\Gamma_h}  + (\lambda L - \gamma \ell, v-w)_{\Gamma_h}.
\end{align}
and $c_h(L,\ell;v,w)=(L,v)_{\Omega_h}+(\ell,w)_{\Gamma_h}$, respectively.
\end{problem}

Note that the variational principle \eqref{eq:discrete} has the same structure as the weak form  \eqref{eq:weak}  of the continuous problem which allows us to utilize similar arguments as in 
Section~\ref{sec:analysis} for the analysis of the discrete problem.
By choice of a basis for the finite dimensional spaces $V_h$ and $W_h$, 
Problem~\ref{prob:1} can be recast as a linear system of ordinary differential equations.
Existence and uniqueness of a solution then follow immediately from the Picard-Lindel\"of theorem. 
\begin{lemma} \label{lem:existenceh}
For any $L^0 \in L^2(\Omega)$ and $\ell^0 \in L^2(\Gamma)$, Problem~\ref{prob:1} has a unique solution.
\end{lemma}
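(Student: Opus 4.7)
The strategy is to reduce Problem~\ref{prob:1} to a finite system of linear ordinary differential equations with constant coefficients and appeal to the standard existence and uniqueness theory, as already hinted in the text.

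First I would fix bases $\{\varphi_i\}_{i=1}^{N}$ of $V_h$ and $\{\psi_j\}_{j=1}^{M}$ of $W_h$ and write
\begin{align*}
L_h(t) = \sum_{i=1}^{N} \alpha_i(t)\,\varphi_i, \qquad \ell_h(t) = \sum_{j=1}^{M} \beta_j(t)\,\psi_j,
\end{align*}
so that the unknown becomes the coefficient vector $y(t)=(\alpha(t),\beta(t))\in\RR^{N+M}$. Inserting this ansatz into \eqref{eq:discrete} and testing against each basis function yields a linear system of the form $\mathbf{M}\dot y(t) + \mathbf{A} y(t) = 0$, where $\mathbf{M}$ is the block-diagonal mass matrix built from $(\varphi_i,\varphi_k)_{\Omega_h}$ and $(\psi_j,\psi_l)_{\Gamma_h}$, and $\mathbf{A}$ is the assembled stiffness matrix associated with $a_h(\cdot;\cdot)$.

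Next I would argue that $\mathbf{M}$ is symmetric and positive definite: for any nonzero coefficient vector $z=(z_1,z_2)$, setting $v_h=\sum z_{1,i}\varphi_i$ and $w_h=\sum z_{2,j}\psi_j$ gives $z^\top \mathbf{M} z = \|v_h\|_{\Omega_h}^2 + \|w_h\|_{\Gamma_h}^2 > 0$ because the basis functions are linearly independent. Hence $\mathbf{M}$ is invertible and the system is equivalent to $\dot y(t) = -\mathbf{M}^{-1}\mathbf{A}\,y(t)$, which is a linear ODE with constant coefficients; the Picard--Lindel\"of theorem (or simply the explicit matrix exponential) yields a unique global solution in $C^1([0,T];\RR^{N+M})$ and hence in $H^1(0,T;V_h\times W_h)$.

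For the initial conditions, I would observe that the data $\widetilde L^0 \in L^2(\Omega_h)$ and $\widetilde \ell^0 \in L^2(\Gamma_h)$ (which are well-defined by the mapping properties of $G_h$, $g_h$ from Section~\ref{sec:notation}) admit unique $L^2$-orthogonal projections onto the finite-dimensional subspaces $V_h$ and $W_h$. These projections determine $L_h(0)$ and $\ell_h(0)$ uniquely and furnish the initial vector $y(0)$ for the ODE. No step is really an obstacle here; the only point worth stressing is the invertibility of $\mathbf{M}$, which relies solely on linear independence of the finite element basis.
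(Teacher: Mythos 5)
Your proposal is correct and follows exactly the route the paper takes: the text preceding Lemma~\ref{lem:existenceh} states that choosing bases for $V_h$ and $W_h$ recasts Problem~\ref{prob:1} as a linear ODE system whose unique solvability follows from Picard--Lindel\"of. Your additional details (block-diagonal positive definite mass matrix from $c_h$, $L^2$-projection for the initial data) simply make explicit what the paper leaves implicit.
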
 

As a next step, let us note that the total mass is conserved also on the discrete level. 
Choosing $v_h \equiv 1$ and $w_h \equiv 1$ as test functions in \eqref{eq:discrete}, we 
obtain similar to the continuous level

\begin{lemma}[Mass conservation] \label{lem:conservationh}
Let $(L_h,\ell_h)$ denote the solution of Problem~\ref{prob:1}. Then 
\begin{align*}
(L_h(t),1)_{\Omega_h} + (\ell_h(t),1)_{\Gamma_h} = (\widetilde L^0,1)_{\Omega_h} + (\widetilde \ell^0,1)_{\Gamma_h} =: M^0_h, 
\end{align*}
for all $t>0$, i.e., the total mass is conserved for all times also on the discrete level. 
\end{lemma}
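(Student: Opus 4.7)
The plan is to mirror the continuous-level argument from Lemma~\ref{lem:properties} almost verbatim, exploiting the fact that the discrete bilinear form $a_h(\cdot;\cdot)$ has exactly the same structure as its continuous counterpart and that constant functions are admissible test functions in both $V_h$ and $W_h$.

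First I would observe that since $V_h$ consists of continuous piecewise linear functions on $\Omega_h$, the constant function $v_h \equiv 1$ belongs to $V_h$, and likewise $w_h \equiv 1 \in W_h$. I would then test the discrete weak formulation \eqref{eq:discrete} with this pair. On the right-hand side, $a_h(L_h,\ell_h;1,1)$ vanishes identically: the two gradient contributions $d_L(\nabla L_h,\nabla 1)_{\Omega_h}$ and $d_\ell(\nabla_{\Gamma_h}\ell_h,\nabla_{\Gamma_h} 1)_{\Gamma_h}$ are zero because the gradients of constants vanish, while the coupling term $(\lambda L_h-\gamma\ell_h,\, 1-1)_{\Gamma_h}$ vanishes trivially.

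Consequently, the left-hand side yields
\begin{align*}
\tfrac{d}{dt}\bigl[(L_h(t),1)_{\Omega_h}+(\ell_h(t),1)_{\Gamma_h}\bigr]
= c_h(\dt L_h(t),\dt \ell_h(t);1,1)
= -a_h(L_h(t),\ell_h(t);1,1) = 0
\end{align*}
for all $t>0$. Integrating in time and invoking the definition of the discrete initial data in Problem~\ref{prob:1}, namely $(L_h(0),1)_{\Omega_h}=(\widetilde L^0,1)_{\Omega_h}$ and $(\ell_h(0),1)_{\Gamma_h}=(\widetilde \ell^0,1)_{\Gamma_h}$, gives the claimed identity.

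There is essentially no obstacle here: the only structural point to check is that constants lie in the finite element spaces, which is automatic from the definitions of $V_h$ and $W_h$. Both the cancellation of the gradient terms and the cancellation of the coupling term through the symmetric test pair $(v_h,w_h)=(1,1)$ are built into the form $a_h(\cdot;\cdot)$ precisely to preserve this conservation property at the discrete level.
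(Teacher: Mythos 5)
Your proof is correct and follows exactly the paper's argument: the paper proves this lemma by testing \eqref{eq:discrete} with $v_h\equiv 1$, $w_h\equiv 1$ and observing, just as on the continuous level, that $a_h(L_h,\ell_h;1,1)=0$. Your additional explicit verification that the gradient and coupling terms cancel, and the appeal to the discrete initial conditions, fill in precisely the details the paper leaves to the reader.
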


\begin{remark}
With the usual mass lumping strategy, one could also 
preserve the non-negativity of the semi-discrete solution provided that the 
triangulation satisfies somewhat stronger conditions. 
We will not go into details here, but refer the reader to \cite{ChenThomee85,Thomee} for details. 
\end{remark}

\subsection{Geometric errors}

As mentioned before, we can use the domain mappings $G_h : \Omega_h \to \Omega$ and $g_h : \Gamma_h \to \Gamma$ to define restrictions 
$$
\widetilde L = L \circ G_h \qquad \text{and} \qquad \widetilde \ell = \ell \circ g_h,
$$
of the continuous solution onto the discrete domains which can then be compared with the discrete solution. 
Some properties of the restrictions have already been discussed in Section~\ref{sec:notation}.
For the numerical analysis, we now proceed with similar arguments as in \cite{Deck_etal09,ER}. 
However, we will work most of the time on the discrete domain $\Omega_h$ here instead of $\Omega$.  
To this end, let us also define the restriction of the bilinear form $a(\cdot;\cdot)$ to the discrete domain $\Omega_h$ by
\begin{align}\label{restrict_a}
\widetilde  a(\widetilde L,\widetilde \ell; \widetilde v,\widetilde w):= a(L,\ell; v, w) \quad \text{ for all } \quad (L,\ell), (v,w)\in H^1(\Omega)\times H^1(\Gamma).
\end{align}
Using the transformation formulas for integrals and derivatives, we can express $\widetilde  a(\cdot;\cdot)$  by 
\begin{align*}
\widetilde  a(\widetilde L,\widetilde \ell; \widetilde v,\widetilde w)
= d_L ( A \nabla \widetilde L, \nabla \widetilde v)_{\Omega_h} + d_\ell (B \nabla_{\Gh} \widetilde \ell, \nabla_{\Gh} \widetilde w)_{\Gamma_h} + (C (\lambda \widetilde L - \gamma \widetilde \ell), \widetilde v - \widetilde w)_{\Gamma_h}
\end{align*}
with coefficient functions
\begin{align*}
A = (D G_h^\top D G_h)^{-1} \det (D G_h), 
\quad 
B = (D g_h^\top D g_h)^{-1} \det (D g_h),
\quad \text{and} \quad 
C = \det (D g_h).
\end{align*}
The bilinear form $a_h(\cdot;\cdot)$ used to define the finite element approximations can therefore be considered to be a non-conforming approximation of the bilinear form $\widetilde a(\cdot;\cdot)$.
In a similar manner, we define the restriction of the bilinear form $c(\cdot;\cdot)$ by $\widetilde c(\widetilde L,\widetilde \ell;\widetilde v,\widetilde w):= c(L,\ell;v,w)$ which can also be expressed explicitly by
\begin{align*}
\widetilde c(\widetilde L,\widetilde \ell;\widetilde v,\widetilde w)  
:= 
(K \widetilde L, \widetilde v)_{\Omega_h} + (C \widetilde \ell,\widetilde w)_{\Gamma_h}
\end{align*}
with $K=\det(DG_h)$ and $C=\det(Dg_h)$ as before. 
The variational characterization \eqref{eq:weak} of the continuous solutions can then be written equivalently as
\begin{align}\label{prob:4}
\widetilde c(\dt \widetilde  L(t), \dt \widetilde \ell(t); \widetilde v,\widetilde w) + \widetilde  a(\widetilde  L(t), \widetilde \ell(t); \widetilde v, \widetilde w) = 0 
\end{align}
for all $(\widetilde v,\widetilde w)\in H^1(\Omega_h)\times H^1(\Gamma_h)$, whereas the discrete variational problem \eqref{eq:discrete} reads
\begin{align}\label{prob:4h}
c_h(\dt L_h(t),\dt \ell_h(t); v_h,w_h) + a_h(L_h(t), \ell_h(t); v_h,w_h) = 0  
\end{align}
for all $(v_h,w_h) \in V_h \times W_h$. 
Note that the semi-discretization \eqref{prob:4h} can thus be interpreted as a non-conforming approximation of problem \eqref{prob:4} on the discrete domain.
The differences between the bilinear forms $a_h(\cdot;\cdot)$ and $\widetilde a(\cdot;\cdot)$ as well as $c_h(\cdot;\cdot)$ and $\widetilde c(\cdot;\cdot)$ are only due to \emph{geometric errors} that can be quantified explicitly
with similar arguments as in \cite[Lemma 6.2]{ER}.
\begin{lemma}\label{lem_a}
For all $(L,\ell)$ and $(v,w)\in H^1(\Omega)\times H^1(\Gamma)$ there holds 
\begin{align*} 
|a_h(\widetilde L,\widetilde \ell;\widetilde v,\widetilde w) - \widetilde  a(\widetilde L,\widetilde \ell; \widetilde v,\widetilde w) |  
&\le Ch\|(L,\ell)\|_{H^1(\Omega)\times H^1(\Gamma)}\|(\widetilde v,\widetilde w)\|_{H^1(\Omega_h)\times H^1(\Gamma_h)}.
\end{align*}
%
The constant $C$ is independent of the mesh size $h$. 
%
\end{lemma}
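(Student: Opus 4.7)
The plan is to write $a_h-\widetilde a$ as the difference of the three constituent pieces, estimate each one separately using the pointwise closeness of $A$, $B$, $C$ to the identity/one that is built into assumptions (A2) and (A3), and then convert the resulting $H^1$-on-$\Omega_h$ norms back to $H^1$-on-$\Omega$ norms via the equivalence \eqref{eq:Resequi1}--\eqref{eq:Resequi2} together with the trace inequality on $\Omega_h$.

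Concretely, I would first subtract the two expressions and collect
\begin{align*}
a_h(\widetilde L,\widetilde \ell;\widetilde v,\widetilde w)-\widetilde a(\widetilde L,\widetilde \ell;\widetilde v,\widetilde w)
&= d_L((I-A)\nabla \widetilde L,\nabla \widetilde v)_{\Omega_h}
+ d_\ell((I-B)\nabla_{\Gamma_h}\widetilde\ell,\nabla_{\Gamma_h}\widetilde w)_{\Gamma_h}\\
&\qquad + ((1-C)(\lambda\widetilde L-\gamma\widetilde\ell),\widetilde v-\widetilde w)_{\Gamma_h}.
\end{align*}
Next, I would derive element-wise bounds $\|I-A\|_{L^\infty(T)}\le Ch_T$, $\|I-B\|_{L^\infty(e)}\le Ch_T$, and $\|1-C\|_{L^\infty(e)}\le Ch_T^2$ by plugging the pointwise estimates of (A2)--(A3) into the algebraic definitions $A=(DG_h^\top DG_h)^{-1}\det DG_h$, $B=(Dg_h^\top Dg_h)^{-1}\det Dg_h$, $C=\det Dg_h$; here one simply uses the identities $M^{-1}-I=M^{-1}(I-M)$ and $\det M-1=\operatorname{tr}(M-I)+O(\|M-I\|^2)$ together with the uniform bound on $\|DG_h^{-1}\|_{L^\infty}$ to absorb constants.

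Applying Cauchy--Schwarz on each element and each edge, and then summing, yields
\begin{align*}
|a_h-\widetilde a|
&\le Ch\,\bigl(\|\nabla\widetilde L\|_{\Omega_h}\|\nabla\widetilde v\|_{\Omega_h}
+\|\nabla_{\Gamma_h}\widetilde\ell\|_{\Gamma_h}\|\nabla_{\Gamma_h}\widetilde w\|_{\Gamma_h}\\
&\qquad\qquad +(\|\widetilde L\|_{\Gamma_h}+\|\widetilde\ell\|_{\Gamma_h})(\|\widetilde v\|_{\Gamma_h}+\|\widetilde w\|_{\Gamma_h})\bigr).
\end{align*}
The boundary terms are then controlled by the trace inequality on $\Omega_h$ (uniform in $h$ thanks to the shape regularity (A1) and the uniform bounds in (A2) on $G_h$), giving $\|\widetilde L\|_{\Gamma_h}\le C\|\widetilde L\|_{H^1(\Omega_h)}$. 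Finally, the norm equivalence \eqref{eq:Resequi1}--\eqref{eq:Resequi2} transfers the bound on $(\widetilde L,\widetilde\ell)$ to a bound on $(L,\ell)$ in $H^1(\Omega)\times H^1(\Gamma)$, leaving the $(\widetilde v,\widetilde w)$ factors in their discrete-domain norms.

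The most delicate step is the surface one: obtaining the estimate for $\|I-B\|_{L^\infty(e)}$ requires that $Dg_h$ (a mapping of tangent spaces of codimension zero on a one-dimensional edge) be close to an isometry up to $O(h)$. This is standard for an isoparametric interpolation of a smooth boundary but it is the place where one must be careful, because (A3) only spells out the determinant estimate; the full estimate $\|Dg_h^\top Dg_h-I\|_{L^\infty(e)}\le Ch$ must be read out of the interior bounds in (A2), restricted to boundary edges, as in \cite[Section~4.2]{ER}. Once this is in place, the rest of the argument is a routine chain of Cauchy--Schwarz, trace, and norm-equivalence estimates, and the analogue of \cite[Lemma~6.2]{ER} applies essentially verbatim.
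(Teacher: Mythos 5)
Your proof is correct and follows the same route as the paper, which simply defers to \cite[Lemma 6.2]{ER}: the underlying argument there is precisely your decomposition into the $(I-A)$, $(I-B)$, $(1-C)$ perturbation terms, the $O(h)$ (respectively $O(h^2)$) coefficient bounds drawn from (A2)--(A3), and the chain of Cauchy--Schwarz, trace, and norm-equivalence estimates. Your remark that the full bound $\|Dg_h^\top Dg_h - I\|_{L^\infty(e)}\le Ch$ must be extracted from (A2) restricted to boundary edges rather than from (A3) alone is accurate and is exactly the ``minor modification'' the paper alludes to.
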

\begin{proof}
%
The estimates follow from \cite[Lemma 6.2]{ER} with minor modifications in the proofs. 
\end{proof}
\noindent 
We will later in particular also need the following estimate for the error in the bilinear form $c$. 
\begin{lemma} \label{lem:c}
For all $(L,\ell)$ and $(v,w)\in L^2(\Omega)\times L^2(\Gamma)$ there holds 
\begin{align*} 
|c_h(\widetilde L,\widetilde \ell;\widetilde v,\widetilde w) - \widetilde  c(\widetilde L,\widetilde \ell; \widetilde v,\widetilde w) |  
&\le C h\|(L,\ell)\|_{L^2(\Omega)\times L^2(\Gamma)}\|(\widetilde v,\widetilde w)\|_{L^2(\Omega_h)\times L^2(\Gamma_h)}.
\end{align*}
If in addition $L$, $v\in H^1(\Omega)$ and $\ell,w \in H^1(\Gamma)$, then 
\begin{align*} 
|c_h(\widetilde L,\widetilde \ell;\widetilde v,\widetilde w) - \widetilde  c(\widetilde L,\widetilde \ell; \widetilde v,\widetilde w) |
& \le C h^2 \|(L,\ell)\|_{H^1(\Omega) \times H^1(\Gamma)} \|(\widetilde v,\widetilde w)\|_{H^1(\Omega_h) \times H^1(\Gamma_h)}. 
\end{align*}
The constant $C$ in these estimates is again independent of the mesh size $h$. 
\end{lemma}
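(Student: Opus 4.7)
The plan is to start from the explicit representation
\begin{align*}
c_h(\widetilde L,\widetilde \ell;\widetilde v,\widetilde w) - \widetilde c(\widetilde L,\widetilde \ell;\widetilde v,\widetilde w)
= ((1-K)\widetilde L,\widetilde v)_{\Omega_h} + ((1-C)\widetilde \ell,\widetilde w)_{\Gamma_h}
\end{align*}
obtained by subtracting the two formulas given in the excerpt. This reduces both estimates to $L^\infty$ bounds on the Jacobian defects $1-K$ and $1-C$, together with (surface) integrals of products of the restricted functions.

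For the first ($L^2$) estimate, I would simply apply H\"older's inequality. Assumption (A2) gives $\|1-K\|_{L^\infty(T)}\le\beta h_T\le\beta h$, while (A3) gives the stronger bound $\|1-C\|_{L^\infty(e)}\le\beta h^2$. Combined with the norm equivalences \eqref{eq:Resequi1} and \eqref{eq:Resequi2} between $\|u\|$ on $\Omega$ and $\|\widetilde u\|$ on $\Omega_h$ (and the analogue on the surface), both terms are bounded by $Ch$ times the $L^2\times L^2$ norms, as claimed.

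For the second ($H^1$) estimate, the surface contribution poses no difficulty because $\|1-C\|_{L^\infty}\le\beta h^2$ yields the $h^2$ factor directly. The volume contribution is the delicate one: $\|1-K\|_{L^\infty}$ is only $O(h)$, so I need to gain an extra factor of $h$. The crucial observation is the last clause of (A2): $G_h(x)=x$ on every element with $G_h(T)\cap\Gamma=\emptyset$, hence $K\equiv 1$ away from the boundary. Consequently the integrand $(1-K)\widetilde L\widetilde v$ is supported on the boundary strip
\begin{align*}
\Omega_h^\partial := \bigcup\{T\in T_h: T\cap\Gamma_h\ne\emptyset\},
\end{align*}
whose Lebesgue measure is $O(h)$ by shape regularity and the smoothness of $\Gamma$. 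Invoking a standard strip estimate (a consequence of the trace inequality and the tubular neighborhood structure of $\Gamma$)
\begin{align*}
\|u\|_{L^2(\Omega^\partial_h)}\le Ch^{1/2}\|u\|_{H^1(\Omega)},
\end{align*}
together with the norm equivalence \eqref{eq:Resequi1} on the strip elements, we obtain
\begin{align*}
|((1-K)\widetilde L,\widetilde v)_{\Omega_h^\partial}|
\le \beta h\,\|\widetilde L\|_{L^2(\Omega_h^\partial)}\|\widetilde v\|_{L^2(\Omega_h^\partial)}
\le Ch\cdot h^{1/2}\|L\|_{H^1(\Omega)}\cdot h^{1/2}\|\widetilde v\|_{H^1(\Omega_h)},
\end{align*}
which provides the desired $h^2$ factor.

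The main obstacle, and the step where some care is required, is establishing the strip estimate and justifying that $\Omega_h^\partial$ indeed has measure $O(h)$ uniformly in $h$. Both are standard and can be extracted from the domain-approximation framework of \cite{Deck,ER}: the smoothness of $\Gamma$ gives a tubular neighborhood of width $O(h)$ containing $\Omega_h^\partial$, and the trace-type inequality then follows from $H^1$ regularity and a one-dimensional Poincar\'e argument along the normal direction. Everything else reduces to routine bookkeeping paralleling the proof of Lemma \ref{lem_a}.
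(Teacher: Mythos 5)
Your proposal is correct and follows essentially the same route as the paper: the same decomposition into the Jacobian-defect integrals, localization of the volume term to the boundary strip $B_h$ using $G_h=\mathrm{id}$ away from $\Gamma_h$, and the strip estimate $\|\widetilde L\|_{L^2(B_h)}\le Ch^{1/2}\|\widetilde L\|_{H^1(\Omega_h)}$ (the localized Poincar\'e/trace inequality of \cite[Lemma~4.10]{ER}) applied to both factors to upgrade $O(h)$ to $O(h^2)$. No substantive differences.
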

\noindent 
As will become clear from the proof, the regularity requirements could be somewhat relaxed.
\begin{proof}
By definition of the bilinear forms, we have 
\begin{align*}
c_h(\widetilde L,\widetilde \ell;\widetilde v,\widetilde w) - \widetilde  c(\widetilde L,\widetilde \ell; \widetilde v,\widetilde w)
&= \int_{B_h} \widetilde L \widetilde v (1-\det(DG_h)) dx + \int_{\Gamma_h} \widetilde \ell \widetilde w (1-\det(Dg_h)) ds. 
\end{align*}
with $B_h = \bigcup_{T \cap \Gamma_h \neq \emptyset} T$. 
Here we used that $G_h = id$ for element not adjacent to the boundary due to assumption (A2). 
Using the bounds for the determinants in (A2) and (A3), we get
\begin{align} \label{eq:estc} 
c_h(\widetilde L,\widetilde \ell;\widetilde v,\widetilde w) - \widetilde  c(\widetilde L,\widetilde \ell; \widetilde v,\widetilde w)
&\le C \big( h \|\widetilde L\|_{L^2(B_h)} \|\widetilde v\|_{L^2(B_h)} + h^2 \|\widetilde \ell\|_{L^2(\Gamma_h)} \|\widetilde w\|_{L^2(\Gamma)} \big).
\end{align}
This already implies the first bound. 
A localized Poincar\'e inequality similar to \cite[Lemma~4.10]{ER} 
and application of the trace theorem yields
\begin{align*}
\|\widetilde L\|_{L^2(B_h)}
&\le C \big ( h^{1/2} \|\widetilde L\|_{L^2(\Gamma_h)} + h \|\nabla \widetilde L\|_{L^2(B_h)}\big) 
\le C h^{1/2} \|\widetilde L\|_{H^1(\Omega_h)}.
\end{align*}
Using this and \eqref{eq:Resequi1} to bound the first term in \eqref{eq:estc} yields the second assertion.
\end{proof}

\subsection{The discretization of the equilibrium problem}

For the approximation of the equilibrium system \eqref{eq:5a}--\eqref{eq:5d}, 
we consider the following discrete variational problem.
\begin{problem}[Discrete equilibrium problem] \label{prob:3}
Find $L^\infty_h \in V_h$ and $\ell^\infty_h \in W_h$ satisfying the mass constraint $(L^\infty_h,1)_{\Omega_h}+(\ell^\infty_h,1)_{\Gamma_h}=M^0_h$ and such that for all $(v_h,w_h) \in V_h \times W_h$ there holds
\begin{align} \label{eq:equilibriumh}
a_h(L^\infty_h,\ell^\infty_h;v_h,w_h) = 0.
\end{align}
\end{problem}
In order to ensure the well-posedness of this problem, it suffices to show uniqueness,
which readily follows from the following two auxiliary results.
\begin{lemma}[Discrete inf-sup stability]
For any $v \in H^1(\Omega_h)$ and $w \in H^1(\Gamma_h)$ 
there holds
\begin{align}\label{eq:discinfsup}
a_h(v,w;\lambda v,\gamma w) 
&= \lambda d_L \|\nabla v\|_{\Omega_h}^2 + \gamma d_\ell \|\nabla_{\Gamma_h} w\|_{\Gamma_h}^2 + \|\lambda v - \gamma w\|^2_{\Gamma_h}.
\end{align}
\end{lemma}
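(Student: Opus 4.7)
The plan is to imitate almost verbatim the proof of the continuous inf-sup identity (Lemma on Inf-sup stability earlier in Section~\ref{sec:analysis}), since the discrete bilinear form $a_h(\cdot;\cdot)$ in \eqref{disbilinear} has exactly the same algebraic shape as $a(\cdot;\cdot)$, only with the integration domains $\Omega$, $\Gamma$ replaced by $\Omega_h$, $\Gamma_h$ and the Laplace–Beltrami gradient on $\Gamma$ replaced by the piecewise tangential gradient on $\Gamma_h$. The identity is purely pointwise-algebraic and makes no use of ellipticity, smoothness of the domain, or any test-space properties, so no approximation theory is required.

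First, I would substitute $L=v$, $\ell=w$ and the test pair $(\lambda v,\gamma w)$ directly into the definition \eqref{disbilinear} of $a_h$. Using bilinearity (linearity in the test arguments) of each of the three summands, the diffusive terms become $d_L(\nabla v,\lambda \nabla v)_{\Omega_h}=\lambda d_L\|\nabla v\|_{\Omega_h}^2$ and $d_\ell(\nabla_{\Gamma_h}w,\gamma \nabla_{\Gamma_h}w)_{\Gamma_h}=\gamma d_\ell\|\nabla_{\Gamma_h}w\|_{\Gamma_h}^2$. The cross/boundary term becomes $(\lambda v-\gamma w,\lambda v-\gamma w)_{\Gamma_h}$, which is manifestly the square of the $L^2(\Gamma_h)$-norm of $\lambda v-\gamma w$. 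Summing the three contributions gives exactly the right-hand side of \eqref{eq:discinfsup}.

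There is no main obstacle here: the statement is a one-line computation that works unchanged at the discrete level precisely because the careful choice of the test function $(\lambda v,\gamma w)$ makes the reaction/transfer term collapse into a perfect square, while the diffusive terms simply rescale. The only thing worth noting in the write-up is that no conformity of $V_h$, $W_h$ with the continuous spaces, and no properties of the mesh or of the domain mapping $G_h$, enter this identity; it is a pure algebraic consequence of the definition \eqref{disbilinear}. This matches the brevity of the corresponding continuous proof and sets up the discrete Poincar\'e-type inequality that will follow to yield uniqueness for Problem~\ref{prob:3}.
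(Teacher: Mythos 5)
Your proof is correct and is exactly the argument the paper intends: the lemma is a one-line algebraic identity obtained by inserting the test pair $(\lambda v,\gamma w)$ into the definition \eqref{disbilinear}, which is why the paper states it without a written proof. No further comment is needed.
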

%
%
The right hand side of \eqref{eq:discinfsup} defines a norm on the subspace of functions with zero total mass which follows by a Poincar\'e-type inequality. It will be important later on that the 
equivalence constant can be chosen to be independent of the mesh.
\begin{lemma} \label{lem:poincareh}
For any $v \in H^1(\Omega_h)$ and $w \in H^1(\Gamma_h)$ with $(v,1)_{\Omega_h} + (w,1)_{\Gamma_h}=0$, there holds
\begin{align} \label{eq:infsupstability}
\lambda \|L\|_{H^1(\Omega_h)}^2 + \gamma \|\ell\|_{H^1(\Gamma_h)}^2
\le C_P \; \big(\lambda d_L \|\nabla L\|_{\Omega_h}^2 + \gamma d_\ell \|\nabla_\Gamma \ell\|_{\Gamma_h}^2 + \|\lambda L - \gamma \ell\|^2_{\Gamma_h}\big)
\end{align}
with a constant $C_P$ that only depends on the parameters $d_L$, $d_\ell$, $\lambda$, and $\gamma$, 
on the domain $\Omega$, and the constants $\beta,\gamma$ in assumptions (A1)--(A3) but is otherwise independent of the mesh $T_h$.
\end{lemma}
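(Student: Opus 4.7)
The plan is to reduce the discrete estimate to the continuous Poincar\'e inequality of Lemma~\ref{lem:poincare} by lifting the discrete pair to the continuous domain via $G_h^{-1}$ and $g_h^{-1}$, correcting the induced mass defect, applying the continuous estimate, and transferring the resulting inequality back to $\Omega_h$ and $\Gamma_h$ by means of the norm equivalences \eqref{eq:Resequi1}--\eqref{eq:Resequi2}. Concretely, given $L\in H^1(\Omega_h)$ and $\ell\in H^1(\Gamma_h)$ with $(L,1)_{\Omega_h}+(\ell,1)_{\Gamma_h}=0$, I would set $L^\star := L\circ G_h^{-1} \in H^1(\Omega)$ and $\ell^\star := \ell \circ g_h^{-1} \in H^1(\Gamma)$; assumptions (A2)--(A3) ensure that the $H^1$-norms of $L,L^\star$ and of $\ell,\ell^\star$ as well as the $L^2$-norms of the corresponding gradients and of $\lambda L^\star-\gamma\ell^\star$ versus $\lambda L-\gamma\ell$ are equivalent with constants depending only on $\beta$.

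The lifted pair in general fails the continuous zero-mass constraint. Using the transformation rules and the Jacobian bounds in (A2)--(A3), the defect
\[
R := (L^\star,1)_\Omega + (\ell^\star,1)_\Gamma = \int_{\Omega_h} L(\det DG_h - 1)\,dx + \int_{\Gamma_h} \ell(\det Dg_h - 1)\,ds
\]
satisfies $|R|\leq Ch(\|L\|_{\Omega_h}+\|\ell\|_{\Gamma_h})$ by Cauchy--Schwarz. I would restore the constraint by subtracting constants $c_1$ from $L^\star$ and $c_2$ from $\ell^\star$, chosen as the unique solution of $\lambda c_1 = \gamma c_2$ and $c_1|\Omega|+c_2|\Gamma|=R$. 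This choice has two key features: the shifted pair $(\widehat L,\widehat \ell)$ has zero total mass \emph{and} preserves the coupling $\lambda \widehat L - \gamma \widehat \ell = \lambda L^\star - \gamma \ell^\star$; moreover one gets $|c_j|\leq Ch(\|L\|_{\Omega_h}+\|\ell\|_{\Gamma_h})$ for $j=1,2$.

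Applying Lemma~\ref{lem:poincare} to $(\widehat L,\widehat \ell)$ and exploiting that the gradients and the coupling term are invariant under constant shifts, followed by pulling everything back to the discrete domain via \eqref{eq:Resequi1}--\eqref{eq:Resequi2} and the Jacobian bounds, would lead to an inequality of the form
\begin{equation*}
\lambda \|L\|_{H^1(\Omega_h)}^2 + \gamma \|\ell\|_{H^1(\Gamma_h)}^2 \leq C_0\bigl(\lambda d_L\|\nabla L\|_{\Omega_h}^2 + \gamma d_\ell\|\nabla_{\Gamma_h}\ell\|_{\Gamma_h}^2 + \|\lambda L - \gamma \ell\|_{\Gamma_h}^2\bigr) + C_1 h^2\bigl(\|L\|_{\Omega_h}^2+\|\ell\|_{\Gamma_h}^2\bigr),
\end{equation*}
where the second term on the right originates from the triangle inequality $\|L^\star\|_\Omega^2 \leq 2\|\widehat L\|_\Omega^2 + 2c_1^2|\Omega|$ and its counterpart for $\ell^\star$, and where $C_0,C_1$ depend only on the continuous Poincar\'e constant, on $\lambda,\gamma,d_L,d_\ell$, on $\Omega$, and on the constants $\beta,\gamma$ in (A1)--(A3).

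The main obstacle, and the only point where mesh size enters nontrivially, is absorbing the $O(h^2)$ perturbation into the left-hand side. For $h \leq h_0$, with $h_0$ depending only on $C_1$ and $\min(\lambda,\gamma)$, this term can be bounded by half of the $L^2$-part of the left-hand side and moved across, producing the claimed inequality with a uniform constant $C_P$. This implicit smallness restriction on the mesh size is standard in the context of domain approximations and is consistent with the formulation of the lemma.
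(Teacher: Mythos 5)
Your proposal is correct and follows essentially the same route as the paper's (one-line) proof: pull the pair back to $\Omega\times\Gamma$ via $G_h,g_h$, apply the continuous Poincar\'e-type inequality of Lemma~\ref{lem:poincare}, and return via the geometric error bounds of (A2)--(A3) and the norm equivalences \eqref{eq:Resequi1}--\eqref{eq:Resequi2}. Your additional care with the $O(h)$ mass defect --- restoring the constraint by a constant shift along the kernel direction $\lambda c_1=\gamma c_2$ so that gradients and the coupling term are untouched, and then absorbing the resulting $O(h^2)$ perturbation for $h\le h_0$ --- supplies exactly the detail the paper's proof leaves implicit, at the mild and standard price of a smallness restriction on the mesh size.
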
 
\begin{proof}
By transformation with the domain mapping $G_h$, all integrals in the definition of the norm can be cast into integrals over $\Omega$ and $\Gamma$, respectively. 
The assertion then follows from the Poincar\'e-type inequality stated in Lemma~\ref{lem:poincare}, the estimates for the geometric errors given in the previous section, and the norm equivalence estimates \eqref{eq:Resequi1} and \eqref{eq:Resequi2}.
\end{proof}

Since the constant $C_P$ can be chosen independently of the mesh size, 
we deliberately use the same symbol as on the continuous level here. 
By similar arguments as in Lemma \ref{lemma:equilibrium}, we obtain

\begin{lemma}[Discrete equilibrium] \label{lem:wellposedness}
Problem~\ref{prob:3} has a unique solution given by 
\begin{align}\label{eq:equilibriumhform}
L_h^{\infty} = \frac{\gamma M^0_h}{\gamma|\Omega_h|+\lambda|\Gamma_h|}
\qquad \text{and} \qquad 
\ell_h^{\infty} 
=\frac{\lambda}{\gamma} L^\infty_h.
\end{align}
\end{lemma}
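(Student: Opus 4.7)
The plan is to mirror the argument of Lemma~\ref{lemma:equilibrium} exactly, since Problem~\ref{prob:3} has the same algebraic structure as the continuous equilibrium problem, and we already have the discrete analogues of all the needed ingredients: the identity \eqref{eq:discinfsup} and the discrete Poincar\'e inequality from Lemma~\ref{lem:poincareh}.

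First I would verify existence by direct substitution. The proposed $L_h^\infty$ and $\ell_h^\infty$ are constants, hence lie in $V_h$ and $W_h$ respectively, and their gradients vanish. Therefore
\[
a_h(L_h^\infty,\ell_h^\infty;v_h,w_h) = (\lambda L_h^\infty - \gamma \ell_h^\infty, v_h-w_h)_{\Gamma_h},
\]
which is identically zero because $\lambda L_h^\infty = \gamma \ell_h^\infty$ by the formula for $\ell_h^\infty$. The mass constraint is checked by a direct computation:
\[
(L_h^\infty,1)_{\Omega_h} + (\ell_h^\infty,1)_{\Gamma_h}
= L_h^\infty\bigl(|\Omega_h| + \tfrac{\lambda}{\gamma}|\Gamma_h|\bigr)
= \frac{\gamma M_h^0}{\gamma|\Omega_h|+\lambda|\Gamma_h|}\cdot \frac{\gamma|\Omega_h|+\lambda|\Gamma_h|}{\gamma} = M_h^0.
\]

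For uniqueness, suppose $(L_h^*,\ell_h^*)$ is any solution of Problem~\ref{prob:3}. Then the difference $(\delta L_h,\delta\ell_h):=(L_h^\infty-L_h^*,\ell_h^\infty-\ell_h^*)$ has total mass zero, and satisfies
\[
a_h(\delta L_h,\delta\ell_h;v_h,w_h) = 0 \qquad \text{for all } (v_h,w_h)\in V_h\times W_h.
\]
Choosing the admissible test pair $(v_h,w_h) = (\lambda\,\delta L_h,\gamma\,\delta\ell_h)$ and applying the discrete inf-sup identity \eqref{eq:discinfsup} gives
\[
\lambda d_L\|\nabla \delta L_h\|_{\Omega_h}^2 + \gamma d_\ell\|\nabla_{\Gamma_h}\delta\ell_h\|_{\Gamma_h}^2 + \|\lambda\,\delta L_h - \gamma\,\delta\ell_h\|_{\Gamma_h}^2 = 0.
\]
Lemma~\ref{lem:poincareh} applied to $(\delta L_h,\delta\ell_h)$, which has zero total mass, then forces $\delta L_h = 0$ and $\delta \ell_h = 0$, completing the proof.

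There is essentially no hard step here; the argument is entirely parallel to the continuous case, and the only thing one needs to double-check is that the scaled test pair $(\lambda\,\delta L_h,\gamma\,\delta\ell_h)$ indeed lies in $V_h\times W_h$, which is immediate since these spaces are closed under multiplication by scalars.
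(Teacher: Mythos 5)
Your proof is correct and follows exactly the route the paper intends: the paper's "proof" of this lemma is just the remark that it follows "by similar arguments as in Lemma~\ref{lemma:equilibrium}", and you have carried out precisely that argument — direct verification of the constant solution and the mass constraint, then uniqueness via the test pair $(\lambda\,\delta L_h,\gamma\,\delta\ell_h)$ in the discrete inf-sup identity \eqref{eq:discinfsup} combined with the discrete Poincar\'e inequality of Lemma~\ref{lem:poincareh}.
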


The error analysis for the equilibrium problem could be obtained by a careful extension of the results in \cite{ER}. 
Since the equilibrium is constant on the continuous and on the discrete level, we can 
however give also a direct proof of the discretization error estimate here.
\begin{lemma} \label{lem:eeeq}
Let $(L^\infty,\ell^\infty)$ and $(L^\infty_h,\ell^\infty_h)$ be the solutions of \eqref{eq:statweak} and \eqref{eq:equilibriumh}, respectively
Then 
\begin{align*}
|L^\infty - L^\infty_h| + |\ell^\infty - \ell^\infty_h|
\le C h^2.
\end{align*}
with a constant $C$ that is independent of the mesh size. 
\end{lemma}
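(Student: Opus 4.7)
The proof plan is to exploit the fact that both continuous and discrete equilibria are given by explicit closed-form expressions in terms of only three ``geometric'' quantities: the total mass $M^0$ (resp.\ $M^0_h$), the volume measure $|\Omega|$ (resp.\ $|\Omega_h|$), and the surface measure $|\Gamma|$ (resp.\ $|\Gamma_h|$). Since $\ell^\infty = (\lambda/\gamma) L^\infty$ and $\ell^\infty_h = (\lambda/\gamma) L^\infty_h$, it suffices to prove $|L^\infty - L^\infty_h| \le Ch^2$. Setting $A = \gamma|\Omega|+\lambda|\Gamma|$ and $A_h = \gamma|\Omega_h|+\lambda|\Gamma_h|$ and using the algebraic identity
\begin{align*}
L^\infty - L^\infty_h
= \gamma\,\frac{(M^0 - M^0_h) A_h + M^0_h (A_h - A)}{A A_h},
\end{align*}
one reduces the claim to proving the three scalar bounds
\begin{align*}
\big||\Omega| - |\Omega_h|\big| \le C h^2, \qquad
\big||\Gamma| - |\Gamma_h|\big| \le C h^2, \qquad
|M^0 - M^0_h| \le C h^2,
\end{align*}
together with the uniform positivity of $A$ and $A_h$, which is automatic since both $|\Omega_h|$ and $|\Gamma_h|$ stay bounded away from $0$ and $\infty$ uniformly in $h$ by assumptions (A2)--(A3).

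For the volume error I would write $|\Omega| - |\Omega_h| = \int_{\Omega_h} (\det DG_h - 1)\,dx$ and observe that, by assumption (A2), $G_h = id$ away from the boundary strip $B_h = \bigcup_{T \cap \Gamma_h \ne \emptyset} T$, whose volume is $O(h)$. Combining $|B_h| \le Ch$ with the pointwise bound $\|\det DG_h - 1\|_{L^\infty(T)} \le \beta h$ yields $O(h) \cdot O(h) = O(h^2)$. The surface error is even easier: $|\Gamma| - |\Gamma_h| = \int_{\Gamma_h} (\det Dg_h - 1)\,ds$, and assumption (A3) provides the pointwise bound $\|\det Dg_h - 1\|_{L^\infty(e)} \le \beta h^2$ directly, giving $O(h^2)$ after integration over $\Gamma_h$.

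For the mass error, the natural approach is to apply Lemma~\ref{lem:c} with the test functions $v \equiv 1 \in H^1(\Omega)$ and $w \equiv 1 \in H^1(\Gamma)$, which are admissible in the second (sharper) estimate of that lemma. This gives
\begin{align*}
|M^0 - M^0_h|
= \big|\widetilde c(\widetilde L^0,\widetilde \ell^0;1,1) - c_h(\widetilde L^0,\widetilde \ell^0;1,1)\big|
\le C h^2\, \|(L^0,\ell^0)\|_{H^1(\Omega)\times H^1(\Gamma)},
\end{align*}
at the price of an implicit $H^1$-regularity hypothesis on the initial data, which seems to be the only genuine obstacle; if one insists on $L^2$ data, the first part of Lemma~\ref{lem:c} gives only $O(h)$, so some additional smoothness (or a different argument based on the specific structure of the restriction operator for the scalar $v=w=1$) is unavoidable. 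Collecting the three bounds and plugging them into the algebraic identity above yields $|L^\infty - L^\infty_h| \le Ch^2$, and the estimate for $\ell^\infty - \ell^\infty_h$ follows by multiplication with the constant $\lambda/\gamma$.
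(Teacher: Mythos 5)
Your proposal follows essentially the same route as the paper: both exploit the explicit constant formulas \eqref{eq:formulas} and \eqref{eq:equilibriumhform} to reduce the claim to the three scalar estimates for $\big||\Omega|-|\Omega_h|\big|$, $\big||\Gamma|-|\Gamma_h|\big|$ and $|M^0-M^0_h|$, which are then obtained from assumptions (A2)--(A3) exactly as you describe. Your additional observation that the $O(h^2)$ bound on $|M^0-M^0_h|$ requires some extra regularity of the initial data (e.g.\ via the second estimate of Lemma~\ref{lem:c}, since for $L^0\in L^2(\Omega)$ the boundary-strip argument only yields $O(h^{3/2})$) is a correct refinement of a point that the paper's terse proof leaves implicit.
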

\begin{proof}
Using the explicit forms of $L^{\infty}$ and $L^{\infty}_h$ in \eqref{eq:formulas} and \eqref{eq:equilibriumhform} we can write
\begin{align*} 
\begin{aligned}
|L^{\infty} - L^{\infty}_h| &= \left|\frac{\gamma M^0}{\lambda|\Omega| + \gamma|\Gamma|} - \frac{\gamma M^0_h}{\lambda|\Omega_h| + \gamma|\Gamma_h|}\right|\\
&\leq c\left(\lambda|\Omega||M^0 - M^0_h| + \gamma M^0\big| |\Omega| - |\Omega_h|\big|\right)
\end{aligned}
\end{align*}
The result for the volume term then follows from the fact that the solutions are constant and from the properties of $G_h$ stated in assumption (A2). 
The estimates for the boundary component follow in a similar manner. 
\end{proof}

\subsection{Convergence to the discrete equilibrium}
Following the basic steps of the analysis on the continuous level, we 
next define the discrete entropy functional 
\begin{align}\label{entropyh}
E_h(L_h,\ell_h) = \frac{1}{2}\left(\lambda \|L_h-L^\infty_h\|_{\Omega_h}^2 +  \gamma \|\ell_h-\ell^\infty_h\|_{\Gamma_h}^2\right).
\end{align}
With the same arguments as in the proof of Lemma~\ref{lem:entropy}, we obtain
\begin{lemma}[Discrete entropy dissipation]\label{lem:disentropy}
Let $(L_h, \ell_h)$ denote the solution of discrete evolution Problem~\ref{prob:1} and $(L_h^\infty,\ell_h^\infty)$ be the corresponding discrete equilibrium.
Then,
\begin{align}\label{entropydissipationh}
\tfrac{d}{dt}E_h(L_h(t),\ell_h(t))
& = - d_L \lambda \|\nabla (L_h(t)-L^\infty_h)\|_{\Omega_h}^2 
- d_\ell \gamma \|\nabla_\Gamma (\ell_h(t) - \ell^\infty_h)\|^2_{\Gamma_h}\\
&\qquad \qquad - \|\lambda (L_h(t) - L^\infty_h) - \gamma (\ell_h(t) - \ell^\infty_h)\|^2_{\Gamma_h} =: -D_h(L_h(t),\ell_h(t)).  \notag
\end{align}
\end{lemma}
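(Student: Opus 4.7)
The plan is to mirror the continuous proof of Lemma~\ref{lem:entropy} on the discrete level, exploiting the fact that the discrete equilibrium $(L_h^\infty, \ell_h^\infty)$ is constant, so that the natural scaled test functions used in the entropy computation are admissible.

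First I would differentiate $E_h(L_h(t),\ell_h(t))$ with respect to $t$ using the chain rule. Since $L_h^\infty$ and $\ell_h^\infty$ are time-independent, this yields
\begin{align*}
\tfrac{d}{dt} E_h(L_h,\ell_h) = \lambda (\dt L_h, L_h - L_h^\infty)_{\Omega_h} + \gamma (\dt \ell_h, \ell_h - \ell_h^\infty)_{\Gamma_h} = c_h(\dt L_h, \dt \ell_h; \lambda(L_h - L_h^\infty), \gamma(\ell_h - \ell_h^\infty)).
\end{align*}

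The key step is then to verify that the test functions $v_h := \lambda(L_h(t) - L_h^\infty)$ and $w_h := \gamma(\ell_h(t) - \ell_h^\infty)$ indeed lie in $V_h$ and $W_h$ respectively. This uses the explicit formula \eqref{eq:equilibriumhform} from Lemma~\ref{lem:wellposedness}, which shows that $L_h^\infty$ and $\ell_h^\infty$ are constants, hence contained in the finite element spaces $V_h$ and $W_h$ (which include constants as piecewise linear functions). Plugging these test functions into the semi-discrete problem \eqref{eq:discrete} gives
\begin{align*}
c_h(\dt L_h, \dt \ell_h; v_h, w_h) = -a_h(L_h, \ell_h; v_h, w_h).
\end{align*}

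Next I would invoke the discrete equilibrium identity \eqref{eq:equilibriumh}, which asserts $a_h(L_h^\infty, \ell_h^\infty; v_h, w_h) = 0$ for all $v_h \in V_h$ and $w_h \in W_h$. Subtracting this from the previous relation and using the linearity of $a_h$ in its first pair of arguments gives
\begin{align*}
\tfrac{d}{dt} E_h(L_h,\ell_h) = -a_h(L_h - L_h^\infty, \ell_h - \ell_h^\infty; \lambda(L_h - L_h^\infty), \gamma(\ell_h - \ell_h^\infty)).
\end{align*}
Finally, applying the discrete inf-sup identity \eqref{eq:discinfsup} to the right-hand side produces exactly $-D_h(L_h, \ell_h)$, which is the claimed formula \eqref{entropydissipationh}.

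I do not expect any substantive obstacle here: the entire argument is algebraic and hinges on the three ingredients already established, namely the discrete variational equation, the constancy of the discrete equilibrium (so that the scaled difference test functions remain in the finite element spaces), and the discrete inf-sup identity. No approximation-theoretic or geometric-error estimates are needed at this stage, since everything is carried out within the discrete setting on $\Omega_h$ and $\Gamma_h$.
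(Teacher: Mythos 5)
Your proposal is correct and follows exactly the route the paper intends: the paper proves this lemma by remarking that it follows ``with the same arguments as in the proof of Lemma~\ref{lem:entropy}'', i.e.\ differentiating $E_h$, testing the discrete weak form \eqref{eq:discrete} with $(\lambda(L_h-L_h^\infty),\gamma(\ell_h-\ell_h^\infty))$, subtracting the discrete equilibrium identity \eqref{eq:equilibriumh}, and applying the discrete inf-sup identity \eqref{eq:discinfsup}. Your explicit check that the constant discrete equilibrium lies in $V_h\times W_h$, so that these test functions are admissible, is the one point that genuinely needs verification on the discrete level, and you handle it correctly.
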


By the discrete Poincar\'e-type inequality stated in Lemma~\ref{lem:poincareh} we further get

\begin{lemma}[Discrete entropy-entropy dissipation inequality] \label{lem:diseed}
For any $v \in H^1(\Omega_h)$ and every $w \in H^1(\Gamma_h)$ satisfying  $(v,1)_{\Omega_h} + (w,1)_{\Gamma_h}=M_h^0$, there holds
\begin{align}\label{eq:disceed}
D_h(v,w) \ge c_0 E_h(v,w) \qquad \text{with} \quad c_0=2/C_P. 
\end{align}
\end{lemma}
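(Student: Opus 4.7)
The plan is to mirror the proof of the continuous entropy-entropy dissipation inequality (Lemma~\ref{lem:eed}), replacing the continuous Poincar\'e-type inequality of Lemma~\ref{lem:poincare} by its discrete counterpart Lemma~\ref{lem:poincareh}. Since this is a direct translation argument, I do not expect any serious obstacle; the essential work was done in establishing the mesh-independent constant $C_P$ in Lemma~\ref{lem:poincareh}.

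First, I would introduce the shifted functions $\tilde v := v - L_h^\infty$ and $\tilde w := w - \ell_h^\infty$. Using the discrete mass constraint $(v,1)_{\Omega_h} + (w,1)_{\Gamma_h} = M_h^0$ together with the fact that the discrete equilibrium satisfies $(L_h^\infty,1)_{\Omega_h} + (\ell_h^\infty,1)_{\Gamma_h} = M_h^0$ (Lemma~\ref{lem:wellposedness}), I obtain $(\tilde v,1)_{\Omega_h} + (\tilde w,1)_{\Gamma_h} = 0$, so $(\tilde v, \tilde w)$ lies in the admissible subspace for Lemma~\ref{lem:poincareh}.

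Next, since $L_h^\infty$ and $\ell_h^\infty$ are constants, the gradient terms satisfy $\nabla \tilde v = \nabla v$ and $\nabla_{\Gamma_h} \tilde w = \nabla_{\Gamma_h} w$, and $\lambda \tilde v - \gamma \tilde w = \lambda v - \gamma w - (\lambda L_h^\infty - \gamma \ell_h^\infty) = \lambda v - \gamma w$ on $\Gamma_h$ (using the explicit form $\ell_h^\infty = \tfrac{\lambda}{\gamma} L_h^\infty$ from Lemma~\ref{lem:wellposedness}, which gives $\lambda L_h^\infty - \gamma \ell_h^\infty = 0$). Hence the right-hand side of the discrete Poincar\'e inequality applied to $(\tilde v, \tilde w)$ is exactly $C_P \cdot D_h(v,w)$.

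Finally, the left-hand side satisfies
\begin{align*}
\lambda \|\tilde v\|_{H^1(\Omega_h)}^2 + \gamma \|\tilde w\|_{H^1(\Gamma_h)}^2
&\ge \lambda \|v - L_h^\infty\|_{\Omega_h}^2 + \gamma \|w - \ell_h^\infty\|_{\Gamma_h}^2
= 2 E_h(v,w),
\end{align*}
by definition of $E_h$ in \eqref{entropyh}. Combining the two bounds gives $2 E_h(v,w) \le C_P \, D_h(v,w)$, which is the claim with $c_0 = 2/C_P$. As on the continuous level, the fact that the same constant $C_P$ appears is exactly what guarantees that the decay rate $c_0$ is \emph{independent of the mesh size}, which will be crucial in the subsequent uniform-in-time error analysis.
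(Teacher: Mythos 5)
Your proof is correct and is exactly the argument the paper intends: the lemma is stated as a direct consequence of the discrete Poincar\'e-type inequality (Lemma~\ref{lem:poincareh}), and your shift by the constant discrete equilibrium, the zero-mean verification, the observation that $\lambda L_h^\infty-\gamma\ell_h^\infty=0$, and the lower bound of the $H^1$-norms by the $L^2$-norms supply precisely the omitted details. No issues.
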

Note that $c_0$ can be chosen independently of $h$.
Using the previous estimates, a Gronwall inequality, and the fact that the entropy is a 
scaled $L^2$-norm distance, we finally obtain
\begin{theorem}[Discrete exponential stability]\label{thrm:disconverge}
Let $(L_h,\ell_h)$ denote the solution to Problem~\ref{prob:1} and 
$(L^{\infty}_h,\ell^{\infty}_h)$ be defined as in Lemma~\ref{lem:wellposedness}. 
Then 
\begin{align}\label{disconverge}
\|L_h(t) - \Lin_h\|_{\Omega_h}^2 + \|\ell_h(t) - \llin_h\|_{\Gamma_h}^2 \leq Ce^{-c_0 t}\left(\|\widetilde L^0 - \Lin_h\|_{\Omega_h}^2 + \|\widetilde \ell^0 - \llin_h\|_{\Gamma_h}^2\right),
\end{align}
with constants $C>0$ and $c_0>0$ that are independent of $t$ and $h$.
\end{theorem}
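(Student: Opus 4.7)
The plan is to mirror the continuous-level proof of Theorem~\ref{thrm:equilibrium} step by step, replacing each ingredient by its discrete analogue which has already been established in the preceding lemmas. Essentially, everything has been set up so that the discrete argument runs in parallel to the continuous one, and the key quantitative point is that all constants involved (in particular $c_0=2/C_P$ from Lemma~\ref{lem:poincareh}) are independent of $h$.

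First I would verify that the error $(L_h(t)-L_h^\infty,\ell_h(t)-\ell_h^\infty)$ satisfies the zero-mass condition required to apply Lemma~\ref{lem:diseed}. This follows directly by combining the discrete mass conservation (Lemma~\ref{lem:conservationh}), which gives $(L_h(t),1)_{\Omega_h}+(\ell_h(t),1)_{\Gamma_h}=M_h^0$ for all $t>0$, with the explicit formula for the discrete equilibrium in Lemma~\ref{lem:wellposedness}, which by construction satisfies $(L_h^\infty,1)_{\Omega_h}+(\ell_h^\infty,1)_{\Gamma_h}=M_h^0$. Subtracting yields zero total mass of the difference.

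Next, I would invoke the discrete entropy dissipation identity (Lemma~\ref{lem:disentropy}) to rewrite $\tfrac{d}{dt}E_h(L_h(t),\ell_h(t))=-D_h(L_h(t),\ell_h(t))$, and then apply the discrete entropy-entropy dissipation inequality (Lemma~\ref{lem:diseed}) on the zero-mass function $(L_h(t)-L_h^\infty,\ell_h(t)-\ell_h^\infty)$ to conclude
\begin{align*}
\tfrac{d}{dt}E_h(L_h(t),\ell_h(t)) \le -c_0\, E_h(L_h(t),\ell_h(t)), \qquad t>0,
\end{align*}
with $c_0=2/C_P$ independent of $h$. A standard Gronwall inequality then gives $E_h(L_h(t),\ell_h(t))\le e^{-c_0 t}\,E_h(L_h(0),\ell_h(0))$. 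Finally, since $\lambda,\gamma>0$, the entropy $E_h$ is equivalent to the squared $L^2(\Omega_h)\times L^2(\Gamma_h)$-distance to the discrete equilibrium, and the definition of the initial data $L_h(0),\ell_h(0)$ as $L^2$-projections of $\widetilde L^0,\widetilde \ell^0$ shows that $E_h(L_h(0),\ell_h(0))\le C(\|\widetilde L^0-L_h^\infty\|_{\Omega_h}^2+\|\widetilde\ell^0-\ell_h^\infty\|_{\Gamma_h}^2)$, which yields the stated bound \eqref{disconverge}.

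There is in fact no significant obstacle here: once the $h$-independence of the Poincaré constant $C_P$ in Lemma~\ref{lem:poincareh} is in hand, the only thing one must be careful about is propagating this uniformity through the Gronwall step so that $c_0$ and $C$ in the conclusion do not degenerate as $h\to 0$. The argument is purely a combination of Lemmas~\ref{lem:conservationh}, \ref{lem:wellposedness}, \ref{lem:disentropy}, and \ref{lem:diseed}, and does not require any new estimate.
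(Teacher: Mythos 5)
Your proposal is correct and follows exactly the route the paper takes: discrete mass conservation plus the explicit discrete equilibrium give the zero-mass condition, Lemma~\ref{lem:disentropy} and Lemma~\ref{lem:diseed} combine with Gronwall to yield exponential decay of $E_h$ with the $h$-independent rate $c_0=2/C_P$, and the equivalence of $E_h$ with the squared $L^2$-distance together with the non-expansiveness of the $L^2$-projection defining the initial data gives \eqref{disconverge}. No gaps.
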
 

Let us emphasize that up to perturbations which vanish with $h \to 0$, 
the constants $C$ and $c_0$ can be chosen the same as on the continuous level.
The decay to equilibrium on the discrete level therefore occurs uniformly with respect o the mesh size and at the same rate as on the continuous level. 
This is also what we observe in our numerical tests; see Section~\ref{sec:num} for details.

\subsection{Error estimates for the semi-discretization}

The error analysis for the Galerkin approximation now proceeds with standard arguments \cite{Thomee,Wheeler73} but carefully taking into account the additional geometric errors. 
A key step is the definition of an appropriate operator $R_h : H^1(\Omega) \times H^1(\Gamma) \to V_h \times W_h$ that allows one to approximate the continuous solution $(L,\ell)$ by a function in  the discretization space $V_h \times W_h$.
For our purpose, we define $R_h$ by
\begin{align} \label{eq:Rh}
a_h(R_h ( L, \ell); v_h,w_h) + \eta c_h(R_h(L,\ell); v_h,w_h) 
= 
\widetilde a(\widetilde L, \widetilde \ell; v_h,w_h) 
+ \eta c_h(\widetilde L, \widetilde \ell; v_h,w_h) 
\end{align}
for all $(v_h,w_h)\in V_h\times W_h$. 
Note that the bilinear form $c_h$ appears on both sides in this definition.
Following standard convention, we call $R_h$ \emph{elliptic projection},  
although it is not a projection in the strict sense. 
The basic properties of $R_h$ needed later on can be summarized as follows.

\begin{lemma}[Elliptic projection]\label{lem:Ritz}
Let $\eta>0$ be chosen large enough. 
Then the elliptic projection $R_h: H^1(\Omega)\times H^1(\Gamma)\rightarrow V_h\times W_h$ is a well-defined bounded linear operator and the estimates 
\begin{align*}
\|(\widetilde L, \widetilde \ell) - R_h( L, \ell)\|_{H^1(\Omega_h)\times H^1(\Gamma_h)} \leq Ch\|(L,\ell)\|_{H^2(\Omega)\times H^2(\Gamma)}
\end{align*}
as well as
\begin{align*}
\|(\widetilde L, \widetilde \ell) - R_h(L,\ell)\|_{L^2(\Omega_h)\times L^2(\Gamma_h)} \leq Ch^2\|(L,\ell)\|_{H^2(\Omega)\times H^2(\Gamma)}
\end{align*}
hold for all $L \in H^2(\Omega)$ and $\ell \in H^2(\Gamma)$ with a constant $C$ independent of the mesh size. 
Moreover, 
\begin{align*}
c_h(R_h(L,\ell);1,1)=c_h(\widetilde L,\widetilde \ell; 1,1).
\end{align*} 
This implies that the elliptic projection $R_h$ is mass preserving in a generalized sense.
\end{lemma}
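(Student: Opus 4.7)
The plan is to establish the four claims in turn, following the standard template for elliptic projections with modifications to account for the geometric consistency error already captured in Lemma~\ref{lem_a} and Lemma~\ref{lem:c}. Throughout, I abbreviate $b_h(\cdot;\cdot) := a_h(\cdot;\cdot) + \eta\, c_h(\cdot;\cdot)$ and $(L_h^{proj},\ell_h^{proj}) := R_h(L,\ell)$.

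\textbf{Well-posedness and mass preservation.} First, I would establish a discrete analogue of Lemma~\ref{lem:garding}, which follows line-by-line from the arguments used on the continuous level since the assumptions (A1)--(A3) guarantee that the trace and Sobolev constants on $\Omega_h,\Gamma_h$ are controlled uniformly in $h$. Choosing $\eta > 0$ strictly larger than the resulting shift constant renders $b_h$ coercive on $H^1(\Omega_h)\times H^1(\Gamma_h)$; continuity is straightforward. Lax--Milgram applied on the closed subspace $V_h\times W_h$ then shows that $R_h$ is well-defined, linear, and bounded. For mass preservation I would test the defining equation \eqref{eq:Rh} with $(v_h,w_h)=(1,1)$: by direct inspection of the expressions for $a_h$ and $\widetilde a$, both $a_h(\cdot;1,1)$ and $\widetilde a(\cdot;1,1)$ vanish identically, so the identity reduces to $\eta\, c_h(R_h(L,\ell);1,1)=\eta\, c_h(\widetilde L,\widetilde\ell;1,1)$, which gives the claim upon dividing by $\eta>0$.

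\textbf{$H^1$ error estimate.} Subtracting \eqref{eq:Rh} from $b_h(\widetilde L,\widetilde\ell;v_h,w_h)$ yields the perturbed Galerkin orthogonality
\begin{align*}
b_h(\widetilde L - L_h^{proj},\,\widetilde\ell - \ell_h^{proj};\,v_h,w_h)
= (a_h - \widetilde a)(\widetilde L,\widetilde\ell;v_h,w_h),
\qquad (v_h,w_h)\in V_h\times W_h.
\end{align*}
Let $(I_h L, I_h \ell)\in V_h\times W_h$ denote the nodal interpolant of $(\widetilde L,\widetilde\ell)$ and set $\theta_h := (I_h L,I_h\ell) - R_h(L,\ell)$. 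Testing the above with $\theta_h$ and using coercivity, continuity of $b_h$, standard interpolation bounds (giving $O(h)$ in $H^1$ norm for $H^2$ data together with the equivalence \eqref{eq:Resequi1}--\eqref{eq:Resequi2}), and Lemma~\ref{lem_a} for the right-hand side, I obtain $\|\theta_h\|_{H^1(\Omega_h)\times H^1(\Gamma_h)} \le C h\,\|(L,\ell)\|_{H^2\times H^2}$. The triangle inequality and the interpolation error estimate for $(\widetilde L - I_h L,\widetilde\ell - I_h\ell)$ then give the claimed $H^1$ bound.

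\textbf{$L^2$ error estimate via Aubin--Nitsche.} Let $e:=(\widetilde L - L_h^{proj},\widetilde\ell-\ell_h^{proj})$ and denote by $\bar e$ its push-forward to $\Omega\times\Gamma$ through $G_h^{-1}$ and $g_h^{-1}$. I would consider the continuous adjoint problem: find $(\phi,\psi)\in H^1(\Omega)\times H^1(\Gamma)$ with
\begin{align*}
a(v,w;\phi,\psi) + \eta\, c(v,w;\phi,\psi) = (\bar e_L,v)_\Omega + (\bar e_\ell,w)_\Gamma
\qquad \forall\,(v,w)\in H^1(\Omega)\times H^1(\Gamma).
\end{align*}
For $\eta$ sufficiently large, elliptic regularity gives $(\phi,\psi)\in H^2(\Omega)\times H^2(\Gamma)$ with $\|(\phi,\psi)\|_{H^2\times H^2} \le C\|\bar e\|_{L^2\times L^2}$. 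Taking $(v,w)$ corresponding to the error, transforming the integrals to $\Omega_h,\Gamma_h$, inserting the nodal interpolant $(I_h\phi,I_h\psi)$ of $(\widetilde\phi,\widetilde\psi)$ and invoking the perturbed orthogonality, I express $\|\bar e\|_{L^2\times L^2}^2$ as the sum of (i) $b_h(e;\widetilde\phi - I_h\phi,\widetilde\psi - I_h\psi)$, bounded by the $H^1$ estimate for $e$ times $O(h)$ interpolation error; (ii) the consistency term $(a_h-\widetilde a)(\widetilde L,\widetilde\ell;I_h\phi,I_h\psi)$; and (iii)--(iv) geometric-error residuals $(a_h-\widetilde a)(e;\widetilde\phi,\widetilde\psi)$ and $(c_h-\widetilde c)(e;\widetilde\phi,\widetilde\psi)$. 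Each of (i), (iii), (iv) is directly $O(h^2)\,\|(L,\ell)\|_{H^2}\,\|\bar e\|_{L^2}$ by combining Lemmas~\ref{lem_a}, \ref{lem:c} with the $H^1$ estimate from the previous step.

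\textbf{Main obstacle.} The delicate term is (ii), where Lemma~\ref{lem_a} as stated would yield only $O(h)$; to reach $O(h^2)$ one must exploit that $G_h=\mathrm{id}$ outside a boundary strip of width $O(h)$, so the integrand is supported in that strip, and then transfer a derivative onto the smooth dual solution by integration by parts (or, equivalently, invoke a sharpened version of the geometric-error estimate when both entries are regular, as in \cite[Lemma~6.2]{ER}). Putting the four bounds together and dividing by $\|\bar e\|_{L^2\times L^2}$, together with the norm-equivalence $\|e\|_{L^2(\Omega_h\times\Gamma_h)}\sim\|\bar e\|_{L^2(\Omega\times\Gamma)}$, delivers the claimed $O(h^2)$ estimate.
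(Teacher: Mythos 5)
Your proposal is correct and follows essentially the same route as the paper, which simply observes that $R_h$ is the finite element approximation of a coercive (for $\eta$ large) elliptic volume--surface problem, derives mass conservation from $a_h(\cdot\,;1,1)=\widetilde a(\cdot\,;1,1)=0$ exactly as you do, and delegates the $H^1$ and $L^2$ error bounds to the results of Elliott--Ranner \cite{ER} ``with minor modification of the proofs'' --- precisely the perturbed-Galerkin-orthogonality and Aubin--Nitsche arguments you spell out. You have also correctly isolated the only genuinely delicate point (obtaining $O(h^2)$ for the geometric consistency term in the duality step via the boundary-strip localization), which is exactly the refinement supplied by \cite[Lemma~6.2]{ER}.
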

\begin{proof}
By Lemma~\ref{lem:garding}, the bilinear forms on both sides of \eqref{eq:Rh} are elliptic
for $\eta$ large enough. The elliptic projection therefore is a finite element approximation of an elliptic volume-surface reaction-diffusion problem. 
The error estimates then follow from the results in \cite{ER} with minor modification of the proofs. The conservation of mass is a direct consequence of the special construction of the elliptic projection and the fact that $a_h(\cdot,\cdot;1,1)=0$ and $\widetilde a(\cdot,\cdot;1,1)=0$.
\end{proof}

We are now in the position to state and prove our first main result.
\begin{theorem}[Error estimate for the semi-discretization] \label{thrm:convergencerate}
Let (A1)--(A3) hold and  assume that the solution of problem \eqref{eq:3a}--\eqref{eq:3d} is sufficiently smooth.
Then for all $t>0$ we have 
\begin{align*}
\|\widetilde L(t) - L_h(t)\|_{\Omega_h} + \|\widetilde \ell(t) - \ell_h(t)\|_{\Gamma_h} \leq Ch^2
\end{align*}
with a constant $C$ 
that is independent of $t$ and $h$.
\end{theorem}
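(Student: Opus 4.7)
The plan is to follow the classical Wheeler--Thom\'ee strategy: use the elliptic projection $R_h$ from Lemma~\ref{lem:Ritz} to split the error as $(\widetilde L, \widetilde \ell) - (L_h, \ell_h) = \rho + \theta$, where $\rho := (\widetilde L, \widetilde \ell) - R_h(L,\ell)$ and $\theta := R_h(L,\ell) - (L_h, \ell_h) \in V_h \times W_h$. The $\rho$-component is controlled directly by the $L^2$ estimate in Lemma~\ref{lem:Ritz}, giving $\|\rho(t)\|_{L^2(\Omega_h) \times L^2(\Gamma_h)} \le Ch^2$ under the assumed smoothness. The bulk of the work goes into establishing a uniform-in-time bound $\|\theta(t)\|_{L^2(\Omega_h) \times L^2(\Gamma_h)} \le Ch^2$, which I intend to achieve by combining the discrete entropy--entropy dissipation machinery of Lemma~\ref{lem:diseed} with a careful treatment of the geometric perturbations quantified in Lemma~\ref{lem:c}.

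Subtracting the discrete scheme \eqref{prob:4h} from the defining identity of $R_h$ and using the lifted continuous equation \eqref{prob:4} to eliminate $\widetilde a(\widetilde L, \widetilde \ell; v_h, w_h)$ leads to the error equation
\begin{align*}
c_h(\partial_t \theta; v_h, w_h) + a_h(\theta; v_h, w_h)
= -c_h(\partial_t \rho; v_h, w_h) + [c_h - \widetilde c](\partial_t \widetilde L, \partial_t \widetilde \ell; v_h, w_h) + \eta\, c_h(\rho; v_h, w_h)
\end{align*}
for all $(v_h, w_h) \in V_h \times W_h$. Testing with $(\lambda \theta_L, \gamma \theta_\ell)$ and invoking the inf-sup identity \eqref{eq:discinfsup} produces $\tfrac{d}{dt} \mathcal{E}_h(\theta) + \mathcal{D}_h(\theta) = \mathcal{R}$, where $\mathcal{E}_h(\theta) := \tfrac12\bigl(\lambda\|\theta_L\|_{\Omega_h}^2 + \gamma\|\theta_\ell\|_{\Gamma_h}^2\bigr)$ and $\mathcal{D}_h(\theta)$ is the corresponding dissipation obtained by setting the equilibrium to zero in Lemma~\ref{lem:disentropy}. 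I plan to bound each piece of $\mathcal{R}$ via Young's inequality: the elliptic projection estimates from Lemma~\ref{lem:Ritz} yield $\|\rho\|_{L^2}, \|\partial_t\rho\|_{L^2} \le Ch^2$, and the second-order geometric bound of Lemma~\ref{lem:c} handles the $[c_h - \widetilde c]$ contribution, with the accompanying factor $\|\theta\|_{H^1(\Omega_h) \times H^1(\Gamma_h)}$ absorbed into a small multiple of $\mathcal{D}_h(\theta)$.

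The principal obstacle is that the discrete Poincar\'{e}-type inequality underlying Lemma~\ref{lem:diseed} requires zero discrete mass, whereas $\theta$ is only approximately mass-free. Since both $L_h$ and $R_h$ preserve $c_h(\cdot;1,1)$, the discrete mass of $\theta(t)$ equals $c_h(\widetilde L(t) - \widetilde L^0, \widetilde \ell(t) - \widetilde \ell^0; 1, 1)$, and by continuous mass conservation this coincides with $[c_h - \widetilde c](\widetilde L(t) - \widetilde L^0, \widetilde \ell(t) - \widetilde \ell^0; 1, 1)$, which is $O(h^2)$ by Lemma~\ref{lem:c}. I will therefore split $\theta = \theta_0 + \alpha$ with $\alpha = (\alpha_L, \alpha_\ell)$ the unique constant pair satisfying $\lambda \alpha_L = \gamma \alpha_\ell$ and $c_h(\alpha;1,1) = c_h(\theta;1,1)$. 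The first constraint gives $\mathcal{D}_h(\theta) = \mathcal{D}_h(\theta_0)$, and together with $c_h(\theta_0;1,1) = 0$ it also yields $\mathcal{E}_h(\theta) = \mathcal{E}_h(\theta_0) + \mathcal{E}_h(\alpha)$ with $\mathcal{E}_h(\alpha) = O(h^4)$; Lemma~\ref{lem:diseed} then delivers $\mathcal{D}_h(\theta) \ge c_0 \mathcal{E}_h(\theta) - Ch^4$.

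Putting these pieces together yields a differential inequality of the form $\tfrac{d}{dt} \mathcal{E}_h(\theta) + \tilde c_0 \mathcal{E}_h(\theta) \le C h^4$ with constants $\tilde c_0, C$ independent of $t$ and $h$, once the Young parameter is chosen sufficiently small. A classical Gronwall argument then delivers $\mathcal{E}_h(\theta(t)) \le e^{-\tilde c_0 t} \mathcal{E}_h(\theta(0)) + Ch^4$ uniformly in $t$, and the initial value $\mathcal{E}_h(\theta(0))$ is itself $O(h^4)$ because $L_h(0)$ and $\ell_h(0)$ are defined as $L^2$-projections of $\widetilde L^0$ and $\widetilde \ell^0$ and both the $L^2$- and the elliptic projection approximate the restriction to order $h^2$ in $L^2$. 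Combining this uniform bound on $\theta$ with the $L^2$ estimate on $\rho$ via the triangle inequality completes the proof.
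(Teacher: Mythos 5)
Your argument is essentially the paper's own proof: the same elliptic-projection splitting into $\rho+\theta$, the same error equation for $\theta$ obtained by combining the definition \eqref{eq:Rh} of $R_h$ with \eqref{prob:4} and \eqref{prob:4h}, testing with $(\lambda\theta_L,\gamma\theta_\ell)$, absorbing the $H^1$-norm of the test function into the dissipation via the discrete Poincar\'e inequality and Young, and closing with Gronwall together with the optimal-order approximation of the initial data. The one point where you depart from the paper is also the one point where you are more careful: the paper simply asserts that the total $c_h$-mass of $\theta_h(t)$ vanishes, whereas, as you observe, it equals $[c_h-\widetilde c](\widetilde U(t)-\widetilde U(0);1,1)=O(h^2)$ and is not exactly zero, because only $\widetilde c(\widetilde U(t);1,1)=c(U(t);1,1)$ is conserved while $c_h(\widetilde U(t);1,1)$ picks up the Jacobian factors of the domain mapping. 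Your decomposition $\theta=\theta_0+\alpha$ with $\lambda\alpha_L=\gamma\alpha_\ell$ and matching mass, which annihilates both the dissipation of $\alpha$ and the cross term in the entropy, restores the entropy--entropy dissipation inequality up to an admissible $O(h^4)$ perturbation and thereby fills a small gap that the paper leaves implicit. Everything else (the $O(h^2)$ bounds on $\rho$ and $\partial_t\rho$, the second-order geometric estimate of Lemma~\ref{lem:c} for the $[c_h-\widetilde c]$ term) matches the paper, so the proposal is correct.
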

\begin{proof}

Set $U = (L,\ell)$, $\widetilde U = (\widetilde L, \widetilde \ell)$, $U_h = (L_h,\ell_h)$, and $R_h U = (\widetilde L_R, \widetilde \ell_R)$.
%
%
In the usual manner, we decompose the error into
\begin{align*}
U_h(t) - \widetilde  U(t) = [\widetilde U(t) - R_h U(t)] + [U_h(t) - R_h U(t)]  =: \widetilde \rho(t) + \theta_h(t).
\end{align*}
By application of Lemma \ref{lem:Ritz}, we already obtain the desired estimate for the first component
\begin{align*}
\|\widetilde \rho(t)\|_{L^2(\Omega_h) \times L^2(\Gamma_h)} \leq Ch^2\|\widetilde  U(t)\|_{H^2(\Omega_h)\times H^2(\Gamma_h)}.
\end{align*}
To estimate the second term $\theta_h(t)$, we set $\Phi_h = (v_h,w_h)$ and note that 
\begin{align*}
c_h(\dt \theta_h(t);& \Phi_h)  + a_h(\theta_h(t); \Phi_h ) \\
&= c_h(\dt U_h(t); \Phi_h ) - c_h(R_h \dt U(t); \Phi_h ) 
    + a_h(U_h(t); \Phi_h ) - a_h(R_h U(t); \Phi_h )\\
&= \widetilde c(\dt \widetilde U(t); \Phi_h ) - c_h(R_h \dt U(t); \Phi_h ) 
    + \widetilde a(\widetilde U(t); \Phi_h ) - a_h(R_h U(t); \Phi_h ).
\end{align*}
Here we used the variational characterizations \eqref{prob:4} and \eqref{prob:4h} of the discrete and the continuous solution.
By definition \eqref{eq:Rh} of the elliptic projection, we further get
\begin{align*}
&c_h(\dt \theta_h(t); \Phi_h)  + a_h(\theta_h(t); \Phi_h ) \\
&= \widetilde c(\dt \widetilde U(t); \Phi_h ) - c_h(R_h \dt U(t); \Phi_h ) 
   - \eta c_h(\widetilde U(t);\Phi_h) + \eta c_h(R_h U(t);\Phi_h)\\
&= [\widetilde c(\dt \widetilde U(t); \Phi_h ) - c_h(\dt \widetilde U(t); \Phi_h )]
  + c_h(\dt \widetilde U(t) - R_h \dt U(t); \Phi_h )
  + \eta c_h(R_h U(t) - \widetilde U(t); \Phi_h ).
\end{align*}
Using Lemma \ref{lem:c} to bound the first term, the estimates for the elliptic projection given in Lemma \ref{lem:Ritz}, and the Cauchy-Schwarz inequality, we obtain
\begin{align*}
c_h(\dt \theta_h(t);&\Phi_h)  + a_h(\theta_h(t); \Phi_h ) \\
& \le C h^2 \big( \|U(t)\|_{H^2(\Omega) \times H^2(\Gamma)} + \|\dt U(t)\|_{H^2(\Omega) \times H^2(\Gamma)} \big) \|\Phi_h\|_{H^1(\Omega_h) \times H^1(\Gamma_h)}.
\end{align*}
%
We now choose $\Phi_h=(v_h,w_h)$ with $v_h=\lambda (L_h(t) - \widetilde L_R(t))$ and $w_h = \gamma(\ell_h(t) - \widetilde{\ell}_R(t))$ as the test function
and define $E_h(\theta(t)) = \frac{1}{2} \|\theta(t)\|_{L^2(\Omega_h) \times L^2(\Gamma_h)}^2$ and $D_h(\theta_h(t))=a_h(\theta_h(t);\Phi_h)$. 
With the previous estimates and the same reasoning as in Lemma~\ref{lem:disentropy} and \ref{lem:diseed}, we get 
\begin{align*}
&\tfrac{d}{dt}E_h(\theta_h(t)) + D_h(\theta_h(t)) 
=c_h(\dt \theta_h(t), \Phi_h ) + a_h(\theta_h(t); \Phi_h ) \\
& \qquad 
  \leq C' h^2 \big( \|\dt U(t)\|_{H^2(\Omega) \times H^2(\Gamma)} + \|U(t)\|_{H^2(\Omega) \times H^2(\Gamma)} \big) \|\theta_h(t)\|_{H^1(\Omega_h) \times H^1(\Gamma_h)}.
\end{align*}
We also used here that the norms of  $\Phi_h$ and $\theta_h(t)$ are comparable by construction. 
By Lemma~\ref{lem:Ritz} the total mass of $\theta_h(t) = U_h(t) - R_h U(t)$ is zero, and we conclude from Lemma \ref{lem:poincareh} that
\begin{align*}
D_h(\theta_h) \geq c \|\theta_h\|_{H^1(\Omega_h)\times H^1(\Gamma_h)}^2  \geq c' E_h(\theta).
\end{align*}
Thus we can absorb the term $\|\theta_h(t)\|_{H^1(\Omega^h) \times H^1(\Gamma_h)}$ in the above estimate via Young's inequality by the entropy dissipation $D_h(\theta_h(t))$ on the left hand side which finally leads to
\begin{align*}
\tfrac{d}{dt} E_h(\theta(t)) + c_1 E_h(\theta(t)) 
\leq C''h^4 \big (\|\dt U(t)\|_{H^2(\Omega) \times H^2(\Gamma)}^2 + \|U(t)\|_{H^2(\Omega)\times H^2(\Gamma)}^2 \big)
\end{align*}
The claim now follows by application of the Gronwall inequality and noting that 
the initial conditions are approximated with optimal order.
\end{proof}

\section{Time discretization} \label{sec:time}

As a final step towards the numerical solution, let us now turn to the 
time discretization. For ease of notation, we only consider the backward Euler scheme 
in detail here. After introducing the scheme and the relevant notation, we again establish the convergence to equilibrium and prove convergence estimates that are uniform in time. 

\subsection{Implicit backward Euler scheme}

Let $\tau>0$ be fixed and define $t^n = n\tau$ for all $n = 0,1,2,\ldots$. 
Given a sequence $\{u^n\}_{n \ge 0}$, we denote by
\begin{align} \label{BackQuotient}
\bdt u^n := \frac{u^{n} - u^{n-1}}{\tau} 
\end{align}
the backward difference quotient which serves as the approximation for the time derivative.
Our fully discrete approximation for the system \eqref{eq:3a}--\eqref{eq:3d} then reads 
\begin{problem}[Time discretization]\label{Prob3}
Define $(L_h^0, \ell_h^0)\in V_h\times W_h$ by
\begin{align}\label{DisInitial}
(L^0_h, v_h)_{\Omega_h} = (\widetilde L^0, v_h)_{\Omega_h} \quad \text{ and } \quad (\ell^0_h, w_h)_{\Gamma_h} = (\widetilde \ell^0, w_h)_{\Gamma_h}
\end{align}
for all $(v_h,w_h)\in V_h\times W_h$. 
Then for $n=1,2,\ldots$, find 
$(L^n_h, \ell^n_h)\in V_h\times W_h$ such that
\begin{align}\label{FullDis}
c_h(\bdt L_h^n,\bdt \ell_h^n; v_h,w_h)+ a_h(L_h^n, \ell_h^n; v_h, w_h) = 0
\end{align}
for all test functions $(v_h,w_h)\in V_h\times W_h$.
\end{problem}
The problems \eqref{FullDis} for the individual time steps can be written equivalently as
\begin{align}\label{VarForm_1}
\frac{1}{\tau}c_h(L_h^n,\ell_h^n; v_h,w_h)
+ a_{h}(L_h^n, \ell_h^n;v_h,w_h) = \frac{1}{\tau} c_h(L_h^{n-1},\ell_h^{n-1}; v_h,w_h).
\end{align}
From this representation and the discrete inf-sup stability condition \eqref{eq:discinfsup}, 
one can directly deduce that the problems \eqref{FullDis} are uniquely solvable.
\begin{lemma}[Well-posedness]
Let $(L_h^{n-1},\ell_h^{n-1}) \in L^2(\Omega_h) \times L^2(\Gamma_h)$ be given. \\
Then for any $\tau>0$, Problem~\ref{Prob3} admits a unique solution $(L_h^n, \ell_h^n) \in V_h \times W_h$.
\end{lemma}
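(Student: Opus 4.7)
The plan is to exploit the finite-dimensionality of $V_h\times W_h$. Since \eqref{VarForm_1} is a square linear system in the coefficients of $(L_h^n,\ell_h^n)$ with respect to a basis of $V_h\times W_h$, existence of a solution is equivalent to uniqueness of the solution, so it suffices to show that the associated homogeneous problem admits only the trivial solution. Equivalently, I want to argue that the bilinear form $B_\tau(L,\ell;v,w):=\tfrac{1}{\tau}c_h(L,\ell;v,w)+a_h(L,\ell;v,w)$ is non-degenerate on $V_h\times W_h$.

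To this end, suppose $(L_h,\ell_h)\in V_h\times W_h$ satisfies $B_\tau(L_h,\ell_h;v_h,w_h)=0$ for all $(v_h,w_h)\in V_h\times W_h$. The natural choice, motivated by the inf-sup identity \eqref{eq:discinfsup}, is to test with $(v_h,w_h)=(\lambda L_h,\gamma \ell_h)$, which again lies in $V_h\times W_h$ because these spaces are linear. Using the definition $c_h(L,\ell;v,w)=(L,v)_{\Omega_h}+(\ell,w)_{\Gamma_h}$ together with Lemma's identity \eqref{eq:discinfsup}, this produces the equation
\begin{equation*}
\tfrac{1}{\tau}\bigl(\lambda\|L_h\|_{\Omega_h}^2+\gamma\|\ell_h\|_{\Gamma_h}^2\bigr)+\lambda d_L\|\nabla L_h\|_{\Omega_h}^2+\gamma d_\ell\|\nabla_{\Gamma_h}\ell_h\|_{\Gamma_h}^2+\|\lambda L_h-\gamma\ell_h\|_{\Gamma_h}^2=0.
\end{equation*}
Every term on the left-hand side is non-negative, and since $\lambda,\gamma,\tau>0$, the very first term already forces $L_h\equiv 0$ and $\ell_h\equiv 0$. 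This establishes uniqueness, and hence also existence, of the solution in Problem~\ref{Prob3}.

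I do not anticipate any real obstacle here: the only non-trivial ingredient is the discrete inf-sup identity \eqref{eq:discinfsup}, which is already available, and the addition of the $\tfrac{1}{\tau}c_h$ term is precisely what compensates the fact that $a_h$ itself fails to be elliptic on constants. In particular, no Poincar\'e-type inequality and no restriction to the zero-mass subspace are needed for this step, which mirrors why the implicit Euler scheme is unconditionally solvable.
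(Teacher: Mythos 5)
Your proof is correct and follows exactly the route the paper indicates: it reduces solvability of the square finite-dimensional system \eqref{VarForm_1} to uniqueness, and obtains uniqueness by testing the homogeneous problem with $(\lambda L_h,\gamma\ell_h)$ and invoking the discrete inf-sup identity \eqref{eq:discinfsup}, with the $\tfrac{1}{\tau}c_h$ term supplying the missing coercivity on constants. Nothing to add.
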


Problem~\ref{Prob3} therefore defines a sequence $\{L_h^n,\ell_h^n\}_{n \ge 0} \subset V_h \times W_h$ which we call the fully discrete solution. By testing with $v_h \equiv 1$ and $w_h\equiv 1$, we further obtain

\begin{lemma}[Mass conservation]\label{lem:fullmass}
Let $M_h^0:= (\widetilde L^0,1)_{\Omega} + (\widetilde \ell^0,1)_{\Gamma_h}$. 
Then, 
\begin{align}\label{DisMassConser}
(L_h^n, 1)_{\Omega_h} + (\ell_h^n, 1)_{\Gamma_h} = M_h^0\qquad \text{for all } n \ge 1,
\end{align}
i.e., the total mass is conserved for all time steps.
\end{lemma}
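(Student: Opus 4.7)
The plan is to mimic the mass conservation arguments on the continuous level (Lemma~\ref{lem:properties}) and on the semi-discrete level (Lemma~\ref{lem:conservationh}), using the crucial observation that the bilinear form $a_h(\cdot,\cdot;\cdot,\cdot)$ annihilates the constant pair of test functions. Concretely, I would test \eqref{FullDis} with $v_h \equiv 1 \in V_h$ and $w_h \equiv 1 \in W_h$, which are admissible since the piecewise linear finite element spaces contain the constants.

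The right-hand side vanishes for these test functions because, from the definition \eqref{disbilinear},
\begin{align*}
a_h(L_h^n, \ell_h^n; 1, 1)
= d_L (\nabla L_h^n, \nabla 1)_{\Omega_h}
+ d_\ell (\nabla_{\Gamma_h} \ell_h^n, \nabla_{\Gamma_h} 1)_{\Gamma_h}
+ (\lambda L_h^n - \gamma \ell_h^n, 1-1)_{\Gamma_h} = 0.
\end{align*}
Consequently \eqref{FullDis} reduces to $c_h(\bdt L_h^n, \bdt \ell_h^n; 1,1) = 0$, which using the definition of the backward difference quotient and the bilinear form $c_h$ is equivalent to
\begin{align*}
(L_h^n, 1)_{\Omega_h} + (\ell_h^n, 1)_{\Gamma_h}
= (L_h^{n-1}, 1)_{\Omega_h} + (\ell_h^{n-1}, 1)_{\Gamma_h}.
\end{align*}

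A straightforward induction on $n$ then shows that this common value equals $(L_h^0,1)_{\Omega_h} + (\ell_h^0,1)_{\Gamma_h}$. The discrete initial conditions \eqref{DisInitial} (with $v_h \equiv 1$, $w_h \equiv 1$) identify this quantity with $(\widetilde L^0, 1)_{\Omega_h} + (\widetilde \ell^0, 1)_{\Gamma_h} = M_h^0$, completing the proof. There is no real obstacle here since admissibility of constant test functions in $V_h \times W_h$, annihilation of $a_h$ on constants, and the telescoping structure of the backward Euler scheme all work just as they do in the semi-discrete setting; the only minor point worth noting is that the argument is independent of $\tau$ and $h$.
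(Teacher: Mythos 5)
Your proof is correct and follows exactly the paper's argument: the paper likewise obtains this lemma by testing \eqref{FullDis} with $v_h\equiv 1$, $w_h\equiv 1$, noting that $a_h(\cdot,\cdot;1,1)=0$, and telescoping back to the initial data via \eqref{DisInitial}. No issues.
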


\begin{remark}
By the usual mass lumping and under additional assumptions on the mesh, also 
non-negativity of solutions could be guaranteed for all time steps provided that the initial 
conditions were non-negative; let us again refer to \cite{ChenThomee85,Thomee} for details.
\end{remark}

\subsection{Convergence to the discrete equilibrium}

To study the large time behavior of the discrete evolution, 
we utilize the discrete entropy functional defined in Section~\ref{sec:semi} given by
\begin{align}\label{entropyht}
E_h(L,\ell) = \frac{1}{2}\left( \lambda \|L-L^\infty_h\|_{\Omega_h}^2 + \gamma \|\ell-\ell^\infty_h\|^2_{\Gamma_h}\right).
\end{align}
As a replacement for the entropy dissipation derived in Lemma~\ref{lem:disentropy}, 
we now have
\begin{lemma}[Fully discrete entropy dissipation]\label{lem:entropydiscrete}
For all $n\geq 1$, there holds
\begin{align}\label{en-endi-flux}
\bdt E_h(L_h^n,\ell^n_h) 
& \le - \lambda d_L \|\nabla (L_h^n-L^\infty_h)\|_{\Omega_h}^2 
- \gamma d_\ell \|\nabla_\Gamma (\ell_h^n - \ell^\infty_h)\|^2_{\Gamma_h}\\
&\quad \qquad \qquad - \|\lambda (L_h^n - L^\infty_h) - \gamma (\ell_h^n - \ell^\infty_h)\|^2_{\Gamma}
  = -D_h(L_h^n,\ell_h^n). \notag
\end{align}
\end{lemma}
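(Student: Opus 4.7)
The strategy is to mirror the proof of Lemma~\ref{lem:disentropy} on the semi-discrete level, with the one essential modification that the time derivative is now a backward difference quotient. This forces us to replace an equality by an inequality through a standard convexity estimate.

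First I would pick the test functions $v_h = \lambda(L_h^n - L_h^\infty)$ and $w_h = \gamma(\ell_h^n - \ell_h^\infty)$ in the fully discrete equation \eqref{FullDis}. Both lie in $V_h$ and $W_h$ respectively since $L_h^\infty,\ell_h^\infty$ are constants. Because $(L_h^\infty,\ell_h^\infty)$ is time independent, $\bdt L_h^n = \bdt(L_h^n - L_h^\infty)$ and $\bdt \ell_h^n = \bdt(\ell_h^n - \ell_h^\infty)$. Moreover, the discrete equilibrium characterization \eqref{eq:equilibriumh} yields $a_h(L_h^\infty,\ell_h^\infty;v_h,w_h)=0$, so $a_h$ may be evaluated at the shifted argument. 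The discrete inf-sup identity \eqref{eq:discinfsup} then produces
\begin{align*}
a_h(L_h^n,\ell_h^n;\,\lambda(L_h^n - L_h^\infty),\gamma(\ell_h^n - \ell_h^\infty)) = D_h(L_h^n,\ell_h^n),
\end{align*}
which accounts for the whole right-hand side of the claim.

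Second, for the $c_h$-term I would invoke the elementary identity $a(a-b) = \tfrac12(a^2 - b^2) + \tfrac12(a-b)^2$, applied pointwise with $a = L_h^n - L_h^\infty$ and $b = L_h^{n-1} - L_h^\infty$; dropping the non-negative remainder gives $a(a-b)\ge \tfrac12(a^2-b^2)$. Integrating over $\Omega_h$, multiplying by $\lambda/\tau$, and performing the analogous computation over $\Gamma_h$ for $\ell$, we obtain
\begin{align*}
c_h\bigl(\bdt L_h^n,\bdt \ell_h^n;\,\lambda(L_h^n - L_h^\infty),\gamma(\ell_h^n - \ell_h^\infty)\bigr)\;\ge\; \bdt E_h(L_h^n,\ell_h^n).
\end{align*}

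Combining the two steps with \eqref{FullDis} immediately yields $\bdt E_h(L_h^n,\ell_h^n) \le -D_h(L_h^n,\ell_h^n)$, which is the claimed inequality. There is no real obstacle: the one step that is not a verbatim translation of Lemma~\ref{lem:disentropy} is the convexity estimate, which is the standard mechanism by which implicit schemes preserve Lyapunov structure. The discarded remainder $\tfrac{\tau}{2}(\|\bdt(L_h^n - L_h^\infty)\|_{\Omega_h}^2 + \|\bdt(\ell_h^n-\ell_h^\infty)\|_{\Gamma_h}^2)$ is precisely the defect responsible for passing from equality to inequality, and is of the expected order $O(\tau)$.
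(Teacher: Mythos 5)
Your proof is correct and follows essentially the same route as the paper: the paper's ``direct computation'' is exactly your convexity estimate $\bdt E_h(L_h^n,\ell_h^n)\le \lambda(\bdt L_h^n, L_h^n-L_h^\infty)_{\Omega_h}+\gamma(\bdt\ell_h^n,\ell_h^n-\ell_h^\infty)_{\Gamma_h}$, after which both arguments proceed as in Lemma~\ref{lem:disentropy} by testing with $(\lambda(L_h^n-L_h^\infty),\gamma(\ell_h^n-\ell_h^\infty))$ and invoking \eqref{eq:equilibriumh} together with the inf-sup identity \eqref{eq:discinfsup}. Your explicit identification of the discarded remainder as the source of the inequality is a nice touch but not a deviation from the paper's argument.
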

\begin{proof}
One can verify by direct computation that
\begin{align}
\bdt E_h(L_h^n,\ell^n_h) 
&\le (\bdt L^n_h, \lambda (L^n_h-L^\infty_h))_{\Omega_h} + (\bdt \ell^n_h, \gamma ( \ell^n_h - \ell^\infty_h))_{\Gamma_h}.
\end{align}
The assertion then follows along the lines of the proof of  Lemma~\ref{lem:disentropy}.
\end{proof}


The exponential convergence to equilibrium can now be established in a similar manner as on the semi-discrete level by using the previous lemma, the entropy-entropy dissipation inequality of Lemma~\ref{lem:diseed}, and a discrete Gronwall inequality. Summarizing, we obtain 

\begin{theorem}[Convergence to the discrete equilibrium]\label{thrm:fullconverge}
For any $\tau >0$ and $n \ge 0$, there holds
\begin{align}\label{converge-full}
\|L^n_h - L^\infty_h\|_{\Omega_h}^2 + \|\ell^n_h - \ell^\infty_h\|_{\Gamma_h}^2 
\le C e^{-c_0 \tau n} (\|L^0_h - L^\infty_h\|_{\Omega_h}^2 + \|\ell^0_h - \ell^\infty_h\|_{\Gamma_h}^2).
\end{align}
The constants $C$ and $c_0$ can be chosen the same as in Theorem~\ref{thrm:disconverge}.
\end{theorem}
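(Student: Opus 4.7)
The plan is to mirror the semi-discrete argument of Theorem \ref{thrm:disconverge}, substituting the backward difference quotient for the time derivative and a discrete Gronwall step for the differential one. The three ingredients I already have at my disposal are the fully discrete mass conservation from Lemma \ref{lem:fullmass}, the discrete entropy dissipation identity from Lemma \ref{lem:entropydiscrete}, and the mesh- and time-step-independent entropy--entropy dissipation inequality from Lemma \ref{lem:diseed}.

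The first step is to observe that, by Lemma \ref{lem:fullmass} and the explicit formulas for the discrete equilibrium in Lemma \ref{lem:wellposedness}, the difference $(L_h^n - L_h^\infty,\ell_h^n - \ell_h^\infty)$ has zero total mass on $(\Omega_h,\Gamma_h)$ for every $n\ge 0$. Therefore Lemma \ref{lem:diseed} applies and gives
\begin{align*}
D_h(L_h^n,\ell_h^n) \;\ge\; c_0\, E_h(L_h^n,\ell_h^n), \qquad n\ge 1,
\end{align*}
with the same constant $c_0=2/C_P$ as on the semi-discrete level. Plugging this into the dissipation inequality of Lemma \ref{lem:entropydiscrete} produces the discrete differential inequality
\begin{align*}
\bdt E_h(L_h^n,\ell_h^n) \;\le\; -c_0\, E_h(L_h^n,\ell_h^n),
\end{align*}
which, after multiplying by $\tau$ and rearranging, is equivalent to the one-step contraction $(1+c_0\tau)\,E_h(L_h^n,\ell_h^n) \le E_h(L_h^{n-1},\ell_h^{n-1})$.

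Iterating this recursion from $n$ back to $0$ yields
\begin{align*}
E_h(L_h^n,\ell_h^n) \;\le\; (1+c_0\tau)^{-n}\, E_h(L_h^0,\ell_h^0),
\end{align*}
and the elementary estimate $(1+c_0\tau)^{-n} \le e^{-c_0\tau n/(1+c_0\tau)}$ converts this multiplicative decay into an exponential one; up to absorbing the factor $1/(1+c_0\tau)\le 1$ into a constant $C$ (or equivalently adjusting $c_0$ by a harmless $\tau$-dependent factor that tends to $1$ as $\tau\to 0$), this gives $E_h(L_h^n,\ell_h^n)\le C e^{-c_0\tau n} E_h(L_h^0,\ell_h^0)$. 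The claim \eqref{converge-full} then follows because the entropy $E_h$ is equivalent to the squared weighted $L^2$-distance to the discrete equilibrium, with equivalence constants depending only on the positive parameters $\lambda$ and $\gamma$.

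The only mildly delicate point is the passage from the recursion $(1+c_0\tau)^{-n}$ to a clean bound of the form $Ce^{-c_0\tau n}$ with constants independent of both $\tau$ and $h$; all $h$-uniformity of the decay rate is already built into Lemma \ref{lem:diseed}, and the $\tau$-uniformity comes from the monotone concavity of $\log(1+x)$. Because the backward Euler method is unconditionally entropy-dissipative for this bilinear problem, no stability restriction on $\tau$ is needed, and the constants can indeed be chosen identical to those of Theorem \ref{thrm:disconverge}, as claimed.
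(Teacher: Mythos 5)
Your argument follows the paper's route exactly: mass conservation (Lemma~\ref{lem:fullmass}) makes the entropy--entropy dissipation inequality of Lemma~\ref{lem:diseed} applicable to $(L_h^n-L_h^\infty,\ell_h^n-\ell_h^\infty)$, which combined with the fully discrete entropy dissipation of Lemma~\ref{lem:entropydiscrete} gives the one-step contraction $(1+c_0\tau)E_h(L_h^n,\ell_h^n)\le E_h(L_h^{n-1},\ell_h^{n-1})$, and a discrete Gronwall iteration finishes the proof; the paper gives no more detail than this sketch. The one step you should repair is the conversion of $(1+c_0\tau)^{-n}$ into $Ce^{-c_0\tau n}$: the offending factor $1/(1+c_0\tau)$ sits in the exponent multiplied by $n$, so it cannot be ``absorbed into $C$''. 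Indeed, since $\ln(1+x)<x$ for $x>0$, one has $(1+c_0\tau)^{-n}>e^{-c_0\tau n}$, and the ratio $(1+c_0\tau)^{-n}e^{c_0\tau n}=e^{n(c_0\tau-\ln(1+c_0\tau))}$ grows without bound in $n$ for any fixed $\tau>0$, so no $n$- and $\tau$-independent $C$ works with the rate $c_0$ itself. The correct reading is your second alternative: the achievable decay rate is $\tilde c_0=\tau^{-1}\ln(1+c_0\tau)$, which is strictly smaller than $c_0$, tends to $c_0$ as $\tau\to0$, and is bounded below by a positive constant uniformly only for $\tau$ in a bounded range (e.g.\ $\tilde c_0\ge c_0/(1+c_0\tau_0)$ for $\tau\le\tau_0$). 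This imprecision is already present in the paper's own assertion that the constants ``can be chosen the same'' as in Theorem~\ref{thrm:disconverge}, so your proof is no weaker than the paper's; but you should either state the rate as $\tilde c_0$ or keep the bound in the form $(1+c_0\tau)^{-n}$, rather than assert the absorption into $C$.
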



Let us emphasize that the constants are independent of the discretization parameters and the time horizon. This will allow us to establish uniform bounds for the discretization error.

\subsection{Error estimates for the full discretization}

For the error analysis on the fully discrete level, we again slightly extend standard arguments \cite{Thomee,Wheeler73} by appropriately taking into account the geometric errors. 
Our main error estimate for the full discretization reads

\begin{theorem}[Error estimates for the full discretization]\label{thrm:fullconvergence}
Let  (A1)--(A3) hold and assume that the solution $(L,\ell)$ of \eqref{eq:3a}--\eqref{eq:3d} is sufficiently smooth. 
Then for all $n\geq 0$ we have
\begin{align*}
\|L^n_h - \widetilde L(t^n)\|_{\Omega_h} + \|\ell^n_h - \widetilde \ell(t^n)\|_{\Gamma_h} \leq C(h^2+\tau)
\end{align*}
with a constant $C$ that is independent of the discretization parameters $\tau$ and $h$ and of $n$.
\end{theorem}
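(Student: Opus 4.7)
The plan is to mimic the semi-discrete error analysis of Theorem~\ref{thrm:convergencerate}, but with the time derivative replaced by backward differences, while using the discrete entropy-entropy dissipation inequality of Lemma~\ref{lem:diseed} to obtain constants that are uniform in $n$. I would start by introducing the shorthand $U(t)=(L(t),\ell(t))$, $\widetilde U(t)=(\widetilde L(t),\widetilde \ell(t))$, $U^n_h=(L^n_h,\ell^n_h)$, and split
\begin{align*}
U^n_h - \widetilde U(t^n) = \big[U^n_h - R_h U(t^n)\big] + \big[R_h U(t^n) - \widetilde U(t^n)\big] =: \theta^n_h + \widetilde\rho^n.
\end{align*}
The component $\widetilde\rho^n$ is controlled by Lemma~\ref{lem:Ritz} giving $\|\widetilde\rho^n\|_{L^2(\Omega_h)\times L^2(\Gamma_h)}\le C h^2 \|U(t^n)\|_{H^2\times H^2}$, which is already of the desired order.

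Next I would derive the governing equation for $\theta^n_h$. Using \eqref{FullDis}, the identity $\widetilde c(\dt\widetilde U(t^n);\Phi_h)+\widetilde a(\widetilde U(t^n);\Phi_h)=0$ from \eqref{prob:4}, and the definition \eqref{eq:Rh} of $R_h$, one obtains for any $\Phi_h=(v_h,w_h)\in V_h\times W_h$
\begin{align*}
c_h(\bdt\theta^n_h;\Phi_h)+a_h(\theta^n_h;\Phi_h)
= &\;\big[\widetilde c(\dt\widetilde U(t^n);\Phi_h)-c_h(\dt\widetilde U(t^n);\Phi_h)\big] \\
&+ c_h\big(\dt\widetilde U(t^n)-R_h\dt U(t^n);\Phi_h\big) \\
&+ c_h\big(R_h\bdt U(t^n)-R_h\dt U(t^n);\Phi_h\big) \\
&+ \eta\, c_h\big(R_h U(t^n)-\widetilde U(t^n);\Phi_h\big).
\end{align*}
The first bracket is a geometric error bounded via Lemma~\ref{lem:c} by $Ch^2\|\dt U(t^n)\|_{H^1\times H^1}\|\Phi_h\|_{H^1\times H^1}$. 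The second and fourth terms are elliptic-projection errors of order $h^2$ by Lemma~\ref{lem:Ritz}. The third term is the time-discretization consistency error; writing $\bdt U(t^n)-\dt U(t^n)=\tau^{-1}\int_{t^{n-1}}^{t^n}(s-t^{n-1})\dt^2 U(s)\,ds$ (or using Taylor's formula) and applying boundedness of $R_h$ yields $\|R_h(\bdt U(t^n)-\dt U(t^n))\|_{L^2\times L^2}\le C\tau\,\max_{s\in[t^{n-1},t^n]}\|\dt^2 U(s)\|_{H^2\times H^2}$.

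Now I would test with $\Phi_h$ chosen as $(\lambda(L^n_h-\widetilde L_R^n),\gamma(\ell^n_h-\widetilde\ell_R^n))$ so that, by the discrete inf-sup identity \eqref{eq:discinfsup}, $a_h(\theta^n_h;\Phi_h)=D_h(\theta^n_h)$ (with $D_h$ defined as in Lemma~\ref{lem:disentropy} but measuring distance to $R_h U(t^n)$). Because $R_h$ preserves the total mass (Lemma~\ref{lem:Ritz}) and the scheme preserves mass (Lemma~\ref{lem:fullmass}), $\theta^n_h$ has zero total discrete mass, so the Poincaré-type estimate of Lemma~\ref{lem:poincareh} applies and gives $D_h(\theta^n_h)\ge c\|\theta^n_h\|_{H^1\times H^1}^2\ge c' E_h(\theta^n_h)$. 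Using the algebraic identity $(\bdt a^n,a^n)\ge \tfrac12 \bdt|a^n|^2$ for the left-hand side in $L^2$, absorbing the $H^1$ factor of $\theta^n_h$ into $D_h(\theta^n_h)$ via Young's inequality, and grouping the right-hand side yields
\begin{align*}
\bdt E_h(\theta^n_h) + c_1 E_h(\theta^n_h) \le C\big(h^4 + \tau^2\big)\Psi(t^n),
\end{align*}
where $\Psi(t^n)$ collects Sobolev norms of $U$, $\dt U$, and $\dt^2 U$ assumed bounded uniformly in $t$ by the exponential decay remark following Theorem~\ref{thrm:equilibrium}.

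Finally, I would apply a discrete Gronwall lemma: multiplying the recursion by $(1+c_1\tau)^n$ and summing gives $E_h(\theta^n_h)\le (1+c_1\tau)^{-n} E_h(\theta^0_h)+C(h^4+\tau^2)/c_1$, so that $\|\theta^n_h\|_{L^2\times L^2}\le C(h^2+\tau)$ uniformly in $n$, once the initial term is controlled by the $L^2$-projection choice \eqref{DisInitial} together with Lemma~\ref{lem:Ritz}. Combining with the bound on $\widetilde\rho^n$ yields the claim. The main obstacle, as on the semi-discrete level, is ensuring the constants stay uniform in time: this hinges on choosing $c_1$ strictly positive (available from Lemma~\ref{lem:poincareh}) and on the uniform-in-time higher regularity of $U$ inherited from the exponential stability established in Section~\ref{sec:analysis}; the secondary technical point is tracking that the geometric error terms in $c_h-\widetilde c$ genuinely produce $h^2$ (rather than $h$) when applied to smooth solutions, which is the content of the second estimate in Lemma~\ref{lem:c}.
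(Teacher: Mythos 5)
Your proposal is correct and follows essentially the same route as the paper's proof: the same splitting $U^n_h-\widetilde U(t^n)=\theta^n_h+\rho^n$ via the elliptic projection, the same error equation with geometric, projection, and time-consistency terms, the same entropy test function $(\lambda(L^n_h-\widetilde L_R^n),\gamma(\ell^n_h-\widetilde \ell_R^n))$ combined with Lemma~\ref{lem:poincareh} and a discrete Gronwall lemma to obtain uniformity in $n$. The only (harmless) deviations are that you attach the consistency error $\bdt U-\dt U$ to $R_h U$ rather than to $\widetilde c(\cdot;\Phi_h)$ as the paper does, with a sign slip in writing that term, neither of which affects the estimate.
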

\begin{proof}
For ease of notation, we again write $U_h^n = (L^n_h,\ell^n_h)$, $\widetilde U = (\widetilde L,\widetilde{\ell})$, and $R_h U = (\widetilde L_R, \widetilde \ell_R)$.
Using the elliptic projection, the error can be decomposed into
\begin{align*}
U^n_h - \widetilde  U(t^n) = [U^n_h - R_h  U(t^n)] + [R_h  U(t^n) - \widetilde  U(t^n)] =: \theta_h^n + \rho^n.
\end{align*}
By the properties of the operator $R_h$, the first error component
$\rho^n$ can be estimated by
\begin{align}\label{rhon}
\|\rho^n\|_{L^2(\Omega_h) \times L^2(\Gamma_h)} \leq Ch^2\|U(t^n)\|_{H^2(\Omega)\times H^2(\Gamma)}.
\end{align}
Similar as in the proof of Theorem~\ref{thrm:convergencerate}, we 
can characterize the second error component by
\begin{align*}
&c_h(\bdt \theta_h^n, \Phi_h) +a_h(\theta_h^n;\Phi_h)\\
& \qquad = c_h(\bdt U^n_h, \Phi_h) - c_h(\bdt R_h U(t^n), \Phi_h) + a_h(U^n_h; \Phi_h) - a_h(R_h U(t^n); \Phi_h)\\
&\qquad = [\widetilde c(\bdt \widetilde U(t^n),\Phi_h) - c_h(\bdt \widetilde U(t^n);\Phi_h)]
+ c_h(\bdt \widetilde U(t^n) - R_h \bdt U(t^n); \Phi_h)  \\
&\qquad \qquad  \qquad \qquad \qquad \qquad 
+ \eta c_h(R_h U(t^n) - \widetilde U(t^n); \Phi_h) 
+ \widetilde c(\dt \widetilde U(t^n) - \bdt \widetilde U(t^n) ;\Phi_h) \\
& \qquad = (i)+(ii)+(iii)+(iv).
\end{align*}
The first three terms can be estimated as in Theorem~\ref{thrm:convergencerate},
and for the fourth term, we use 
\begin{align*}
\bdt \widetilde U(t^n) - \dt \widetilde U(t^n) = -\int_{t^{n-1}}^{t^n} \int_{t}^{t^n} U_{tt}(s) ds \; dt.
\end{align*}
Proceeding similarly as in the proof of Theorem~\ref{thrm:convergencerate},
we then arrive at
\begin{align*}
&\bdt E_h(\theta_h^n) + c_1 E_h(\theta_h^n)\\
&\qquad \leq C' \big(h^4 \|\dt U(t^n)\|_{H^2(\Omega \times H^2(\Omega)}^2 + h^4 \|U(t^n)\|_{H^2(\Omega) \times H^2(\Omega)}^2 + \tau^2 \|U_{tt}(\xi^n)\|_{L^2(\Omega) \times L^2(\Gamma)}^2 \big)
\end{align*}
for some positive constants $c_1,C'>0$ and appropriate $\xi^n \in (t^{n-1},t^n)$. 
The result now follows by a discrete Gronwall lemma.
\end{proof}

Let us emphasize once more that all error estimates hold uniformly in time.
By simple inspection of the proof, the precise regularity requirements can be made explicit. 

\section{Numerics} \label{sec:num}

For illustration of our theoretical results, we now report about some numerical results
obtained for the model problem \eqref{eq:1a}--\eqref{eq:1c}.
For ease of presentation, we restrict ourselves to two space dimensions 
and consider $\Omega$ to be the unit circle. 
In the simulations, we chose the model parameters as $d_L = 0.01$, $d_\ell = 0.02$, $\gamma = 2$, and $\lambda = 4$, and we set the initial conditions to 
\begin{align*}
 L^0(x,y) = \frac{1}{2}(x^2 + y^2) \quad \text{and} \quad \ell^0(x,y) = \frac{1}{2} (1+x).
\end{align*}
We start by investigating the convergence of solutions with respect to the mesh size. 
To this end, we generate a sequence of meshes $\{T_h\}_{h>0}$ by uniform refinement of an initial triangulation consisting of $258$ elements. 
We further choose $T=2$ as the final time and $\tau=0.01$ as the time step size. 
As a computable approximation for the error at time $t=T$, we use the difference of solutions obtained on two consecutive refinements. To be precise, we use 
$$
L(T) - L^N_h(T) \approx L^N_{h/2} - L_h^N := \triangle L_h
$$  
and $\triangle \ell_h=\ell_{h/2}^N - \ell_h^N$ to measure the error for the error in the volume and surface concentrations, respectively.
The results obtained in our numerical tests are displayed in Table \ref{tab:errorh}.
%
\begin{table}[ht!]\label{tab:errorh}
 \begin{tabular}{c||c|c||c|c||c|c||c|c}
 $h$ & $ \|\triangle L_h\|_{L^2}$ & rate & $\|\triangle \ell_h\|_{L^2}$ & rate & $\|\triangle L_h\|_{H^1}$ & rate & $\|\triangle \ell_h\|_{H^1}$ & rate \\ 
\hline 
0.2500 & 0.0066 & ---  & 0.0029 &  --- & 0.1193 &  --- & 0.0390 &  --- \\
0.1250 & 0.0018 & 1.88 & 0.0007 & 1.98 & 0.0608 & 0.97 & 0.0195 & 1.00 \\
0.0625 & 0.0005 & 1.96 & 0.0002 & 1.99 & 0.0306 & 0.99 & 0.0097 & 1.00 \\
0.0313 & 0.0001 & 1.99 & 0.0000 & 2.01 & 0.0154 & 1.00 & 0.0049 & 1.00 \\
0.0016 & 0.0000 & 2.00 & 0.0000 & 1.94 & 0.0077 & 1.00 & 0.0024 & 1.00 \\
 \end{tabular}
\vskip0.5em
\caption{\small Errors obtained on a sequence of uniformly refined meshes for the model problem \eqref{eq:1a}--\eqref{eq:1c} with convergence rate estimated from two consecutive levels.}
\end{table}
%
%
They confirm the convergence rates that were predicted by Theorem \ref{thrm:convergencerate}. 
Let us mention that the $O(h)$ convergence of the error in the $H^1$-norm can also be explained theoretically \cite{Thomee,Wheeler73}. 

\medskip 

In a second test, we verify the convergence with respect to the time step size $\tau$. 
For that we use a spatially refined mesh $T_h$ with mesh size $h=0.03125$. As before, we set the final time to $T=2$ and we run the simulations for 
different time steps sizes $\tau=2^{-k}$, $k=1,2,\ldots$. 
The results obtained in our simulations are depicted in Table~\ref{tab:errort}.

\begin{table}[ht]\label{tab:errort}
 \begin{tabular}{c||c|c||c|c||c|c||c|c}
 $\tau$ & $ \|\triangle L_h\|_{L^2}$ & rate & $\|\triangle \ell_h\|_{L^2}$ & rate & $\|\triangle L_h\|_{H^1}$ & rate & $\|\triangle \ell_h\|_{H^1}$ & rate \\ 
\hline 
0.5000 & 0.0082 & ---  & 0.0109 &  --- & 0.0464 &  --- & 0.0154 &  --- \\
0.2500 & 0.0046 & 0.82 & 0.0057 & 0.94 & 0.0269 & 0.79 & 0.0080 & 0.94 \\
0.1250 & 0.0025 & 0.90 & 0.0029 & 0.97 & 0.0147 & 0.88 & 0.0041 & 0.97 \\
0.0625 & 0.0013 & 0.95 & 0.0015 & 0.98 & 0.0077 & 0.93 & 0.0021 & 0.98 \\
0.0313 & 0.0007 & 0.97 & 0.0007 & 0.99 & 0.0039 & 0.96 & 0.0010 & 0.99 \\
 \end{tabular}
\vskip0.5em
\caption{\small Errors and convergence rates obtained with decreasing time step size $\tau$ for the model problem \eqref{eq:1a}--\eqref{eq:1c} with rate computed from two consecutive levels.}
\end{table}
 
The error history clearly indicates convergence of first order with respect to $\tau$ which agrees with the rate predicted by Theorem~\ref{thrm:fullconvergence}. 
Note that the same convergence rate is observed for the error in the $L^2$- and the $H^1$-norm. Also this observation could be proven theoretically. 

\medskip 

With the third test, we would like to illustrate the large time behavior of the system. 
To this end, we choose $T=500$ and $\tau = 0.5$.
We then compute the discrete solutions for a sequence of uniformly refined meshes and evaluate the entropy, to be more precise, we compute 
$$
\widetilde E(L_h^N,\ell_h^N) = \frac{1}{2} \big(\lambda \|L_h^N - L^\infty\|_{\Omega_h}^2 + \gamma \|\ell^N_h - \ell^\infty\|^2_{\Gamma_h}\big),
$$
which measures the distance of the discrete solution to the \emph{exact} equilibrium state.
This allows us to evaluate at the same time the convergence to equilibrium and the 
inexact approximation of the equilibrium state which is due to the geometric errors.
The corresponding results are  displayed in Figure \ref{fig:exponential}. 
\begin{figure}[ht!]
 \includegraphics[width=0.75\textwidth]{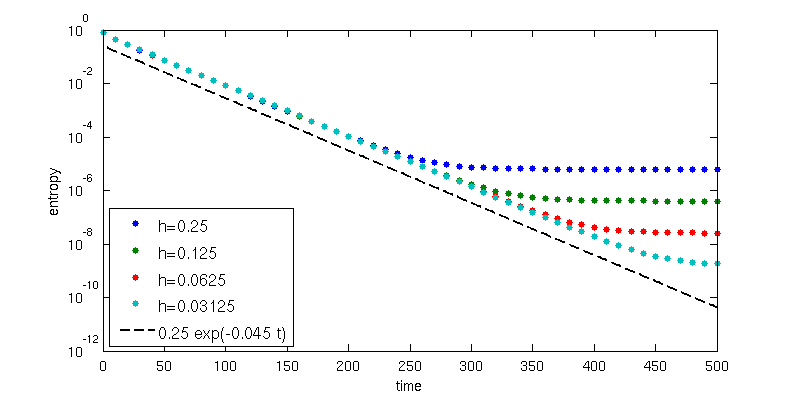}
\caption{\small Decay of the entropy $\widetilde E(L_h(t^n),\ell_h(t^n))$ over time. 
One can clearly see the exponential convergence with a mesh independent rate. 
Note that for large time, the discretization error for the equilibrium state becomes dominant
and leads to saturation.
}
\label{fig:exponential}
\end{figure}
As predicted by our theoretical results, we observe uniform exponential convergence to equilibrium. 
The numerical results also allow us to estimate the exponential decay rate which is approximately $c_0=0.045$ here.
For large time, the discretization errors due to approximation of the equilibrium becomes dominant; compare with Lemma~\ref{lem:eeeq}.

\section{Extension to a system arising in asymmetric stem cell division}\label{sec:extension}

As a final step of our considerations, we would like to illustrate now that the analysis and the discretization strategy presented for the model problem \eqref{eq:3a}--\eqref{eq:3c} can be extended with minor modifications to more general volume-surface reaction-diffusion systems which share the same key properties: (i) non-negativity of solutions, (ii) conservation of mass, (iii) a constant positive detailed balance equilibrium, and (iv) a quadratic entropy functional and an appropriate entropy-entropy dissipation estimate.

As an example, we consider the following system describing the evolution of two volume and two surface concentrations, which was already mentioned briefly in the introduction.
\begin{subequations}\label{eq:6}
\begin{align}
\dt L - d_L\Delta L = -\beta L + \alpha P, &\qquad x\in\Omega,  \ t>0,\label{eq:6a}\\
\dt P - d_P\Delta P = \beta L - \alpha P, &\qquad x\in\Omega,  \ t>0,\label{eq:6b}\\
\dt \ell - d_{\ell}\Delta_{\Gamma}\ell = -d_L\partial_n L + \chi_{\Gamma_2}(-\sigma \ell + \kappa p), &\qquad x\in\Gamma,\  t>0, \label{eq:6c}\\
\dt p - d_p\Delta_{\Gamma_2} = \sigma \ell - \kappa p - d_P\partial_n P, &\qquad x\in\Gamma_2, \ t>0.\label{eq:6d}
\end{align}
As before, $\Omega$ is a bounded domain in two or three dimensions with a smooth boundary $\Gamma = \partial\Omega$. 
Furthermore, $\Gamma_2 \subset \Gamma$ is a proper subpart with smooth boundary $\partial\Gamma_2$. We denote by $\chi_{\Gamma_2}$ the characteristic function on $\Gamma_2$. 
The reactions and the mass transfer between the volume and the surface are governed by 
\begin{align}
d_L\partial_n L = -\lambda L + \gamma \ell, &\qquad x\in\Gamma, \ t>0, \label{eq:6e}\\
d_P\partial_n P = \chi_{\Gamma_2}(-\eta P + \xi p), &\qquad x\in\Gamma,\ t>0, \label{eq:6f}\\
d_p\partial_{n_{\Gamma}}p = 0, &\qquad x\in\partial\Gamma_2, \ t>0,\label{eq:6g}
\end{align}
\end{subequations}
and the system is complemented by appropriate initial conditions. 
A schematic sketch of the reaction dynamics is given in Figure \ref{fig:dynamics}. 

A variant of the system \eqref{eq:6a}--\eqref{eq:6g} 
was studied recently in \cite{FRT} as a model for the asymmetric localization of Lgl during the mitosis of SOP stem cells of Drosophila \cite{BMK,MEBWK,WNK}.
The volume concentrations $L, P$ then represent the cytoplasmic pure and phosphorylated form of the protein Lgl (Lethal giant larvae) which is essential in triggering the asymmetric localization of so-called cell-fate determinant proteins during asymmetric division. Moreover, $\ell,p$ represent the corresponding cortical Lgl concentration. A detailed discussion can be found in \cite{FRT}.

We assume that all model parameters are positive constants and that 
the system has a \emph{detailed balance equilibrium}, i.e. we require that
\begin{align}\label{detailedbalance}
\frac{\alpha \lambda \sigma \xi}{\beta \gamma \kappa \eta} = 1.
\end{align}
Using this condition, one can show that the system \eqref{eq:6a}--\eqref{eq:6g} has very similar properties as the model problem \eqref{eq:3a}--\eqref{eq:3c} and, therefore, the analysis of the previous sections can be carried over to the system \eqref{eq:6} almost verbatim.
Let us sketch the necessary key steps in more detail:

\smallskip \noindent 
{\bf Step 1.} The system \eqref{eq:6a}--\eqref{eq:6g} has an inherent {\it mass conservation law}, i.e., 
\begin{align} \label{4x4mass}
M(t) := \int_{\Omega} L(t) + P(t) dx + \int_{\Gamma} \ell(t) ds + \int_{\Gamma_2} p(t) ds = M(0)
\qquad \text{for all } t > 0.
\end{align}

\smallskip \noindent 
{\bf Step 2.} Together with the detailed balance condition \eqref{detailedbalance}, 
one can show that for any initial mass $M^0>0$ there exists a unique positive constant equilibrium $(L^{\infty}, P^{\infty}, \ell^{\infty}, p^{\infty})$. Again, analytic formulas depending only on the initial mass $M^0$, on the model parameters, and on the geometry can be derived.

\smallskip \noindent 
{\bf Step 3.} We can define a {quadratic relative entropy functional},
which here has the form
\begin{align} \label{4x4entropy}
E(L,P,\ell,p)= \frac{1}{2}&\left(\int_{\Omega}\frac{1}{L^{\infty}}|L-L^{\infty}|^2dx + \int_{\Omega}\frac{1}{P^{\infty}}|P-P^{\infty}|^2dx \right.\nonumber\\
&\left.\qquad \qquad + \int_{\Gamma}\frac{1}{\ell^{\infty}}|\ell-\ell^{\infty}|^2 ds + \int_{\Gamma_2}\frac{1}{p^{\infty}}|p -p^{\infty}|^2 ds\right).
\end{align}
Let us mention that, up to scaling with a constant, also the entropy functional for the model problem \eqref{eq:3a}--\eqref{eq:3c} defined in \eqref{eq:entropy} could be written in this way. 

\smallskip \noindent 
{\bf Step 4.} The corresponding {entropy dissipation relation} now reads
\begin{align*}
&\tfrac{d}{dt}E(L(t),P(t),\ell(t),p(t)) \\
&= -\tfrac{d_L}{\Lin} \|\nabla L(t)\|^2_\Omega  
   - \tfrac{d_P}{\Pin} \|\nabla P(t)\|_{\Omega}
- \tfrac{d_{\ell}}{\ell^{\infty}} \|\nabla_{\Gamma}\ell(t)|^2_\Gamma 
- \tfrac{d_{p}}{p^{\infty}} \|\nabla_{\Gamma_2}p(t)\|_{\Gamma_2}^2\\
&\quad 
- \tfrac{1}{\beta L^{\infty}} \|\beta L(t) - \alpha P(t)\|_\Omega^2  - \tfrac{1}{\gamma\ell^{\infty}} \|\gamma \ell(t) - \lambda L(t))^2 \|_\Gamma^2
\\ &\quad 
- \tfrac{1}{\kappa p^{\infty}} \|\kappa p(t) - \sigma \ell(t)\|_{\Gamma_2}^2 
 - \tfrac{1}{\eta P^{\infty}} \|\eta P(t) - \xi p(t)\|_{\Gamma_2}^2 =: -D(L(t),P(t),\ell(t),p(t)).
\end{align*}

\smallskip \noindent 
{\bf Step 5.} Similar as in Lemma \ref{lem:poincare}, 
an {entropy-entropy dissipation estimate} of the form
\begin{align}\label{4x4eede}
D(L,P,\ell,p) \geq c_0\, E(L,P,\ell,p)
\end{align}
holds with a constant $c_0$ only depending on the model parameters and on the domain. 
Similar as above, the proof of this estimate is again based on a Poincar\'e-type inequality.

\bigskip 

\noindent
Following the arguments of Sections~\ref{sec:analysis}--\ref{sec:time} 
one can then establish the following results:

\smallskip \noindent 
{\bf Result 1.} 
Convergence to equilibrium for the continuous problem, i.e., 
\begin{align*}
\|L(t) - \Lin\|_{\Omega} + \|P(t) - \Pin\|_{\Omega} + \|\ell(t) - \llin\|_{\Gamma} + \|p(t) - \ppin\|_{\Gamma_2} \leq Ce^{-c_0t} \; \text{ for all } \; t>0.
\end{align*}

\smallskip \noindent 
{\bf Result 2.} 
Convergence to the discrete equilibrium for semi-discrete solutions
\begin{align*}
\|L_h(t) - \Lin_h\|_{\Omega_h} + \|P_h(t) - \Pin_h\|_{\Omega_h} + \|\ell_h(t) - \llin_h\|_{\Gamma_h} + \|p_h(t) - \ppin_h\|_{\Gamma_{2,h}} \leq Ce^{-c_0t},
\end{align*}
with constants $c_0,C$ independent of the mesh size $h>0$

\smallskip \noindent
{\bf Result 3.} 
Convergence for the fully discrete solution, i.e., 
\begin{align*}
\|L_h^n - \Lin_h\|_{\Omega_h} + \|P_h^n - \Pin_h\|_{\Omega_h} + \|\ell_h^n - \llin_h\|_{\Gamma_h} + \|p_h^n - \ppin_h\|_{\Gamma_{2,h}} \leq C e^{-c_0 \tau n},
\end{align*}
with constants $c_0,C$ independent of the mesh size $h>0$ and the time step $\tau>0$.

\smallskip \noindent 
{\bf Result 4.} 
Error estimates and order optimal convergence for the semi-discretization independent of time horizon
\begin{align*}
\|L_h(t) - \widetilde L(t)\|_{\Omega_h} + \|P_h(t) - \widetilde P(t)\|_{\Omega_h} + \|\ell_h(t) -\widetilde\ell(t)\|_{\Gamma_h} + \|p_h(t)-\widetilde p(t)\|_{\Gamma_{2,h}}\leq Ch^2.
\end{align*}

\smallskip \noindent 
{\bf Result 5.} 
Order optimal error estimates for the full discretization
\begin{align*}
\|L_h^n - \widetilde L(t^n)\|_{\Omega_h} + \|P_h^n -\widetilde P(t^n)\|_{\Omega_h} + \|\ell_h^n - \widetilde \ell(t^n)\|_{\Gamma_h} + \|p_h^n-\widetilde p(t^n)\|_{\Gamma_{2,h}}\leq C(h^2+\tau)
\end{align*}
with constant $C$ that is independent of the time horizon, of the mesh size, and of the time step.

\medskip

\noindent 
{\bf Conclusion:} 
All results obtained in the previous sections for the model problem \eqref{eq:3a}--\eqref{eq:3d} carry over almost verbatim to more complex problems
sharing the same key properties.

\bigskip 

For illustration of the solution behavior, 
some snapshots of the concentrations $P(t)$ and $L(t)$ for a typical 
scenario are depicted in Figure~\ref{fig:evolution_44}.

\begin{figure}[ht!]\label{fig:evolution_44}
	\includegraphics[width=0.24\textwidth]{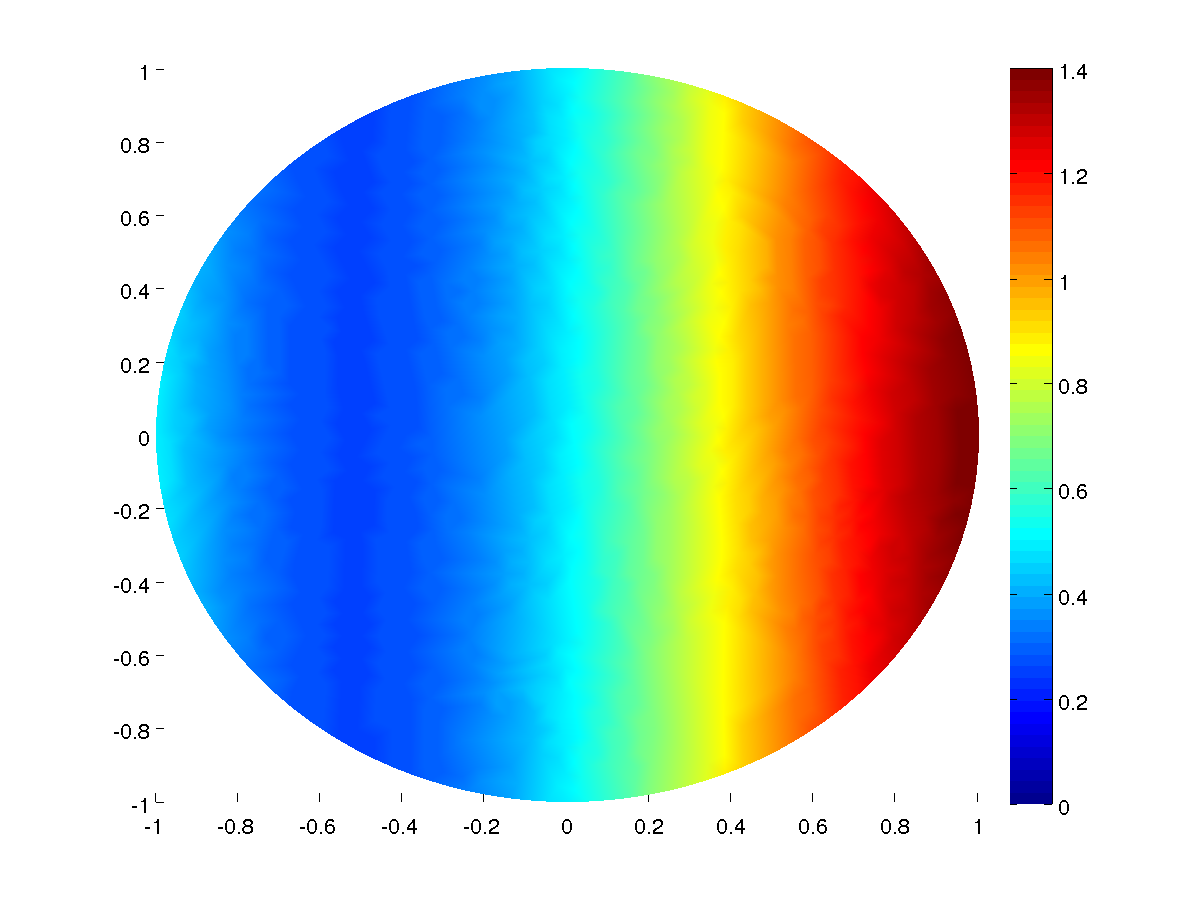}\includegraphics[width=0.24\textwidth]{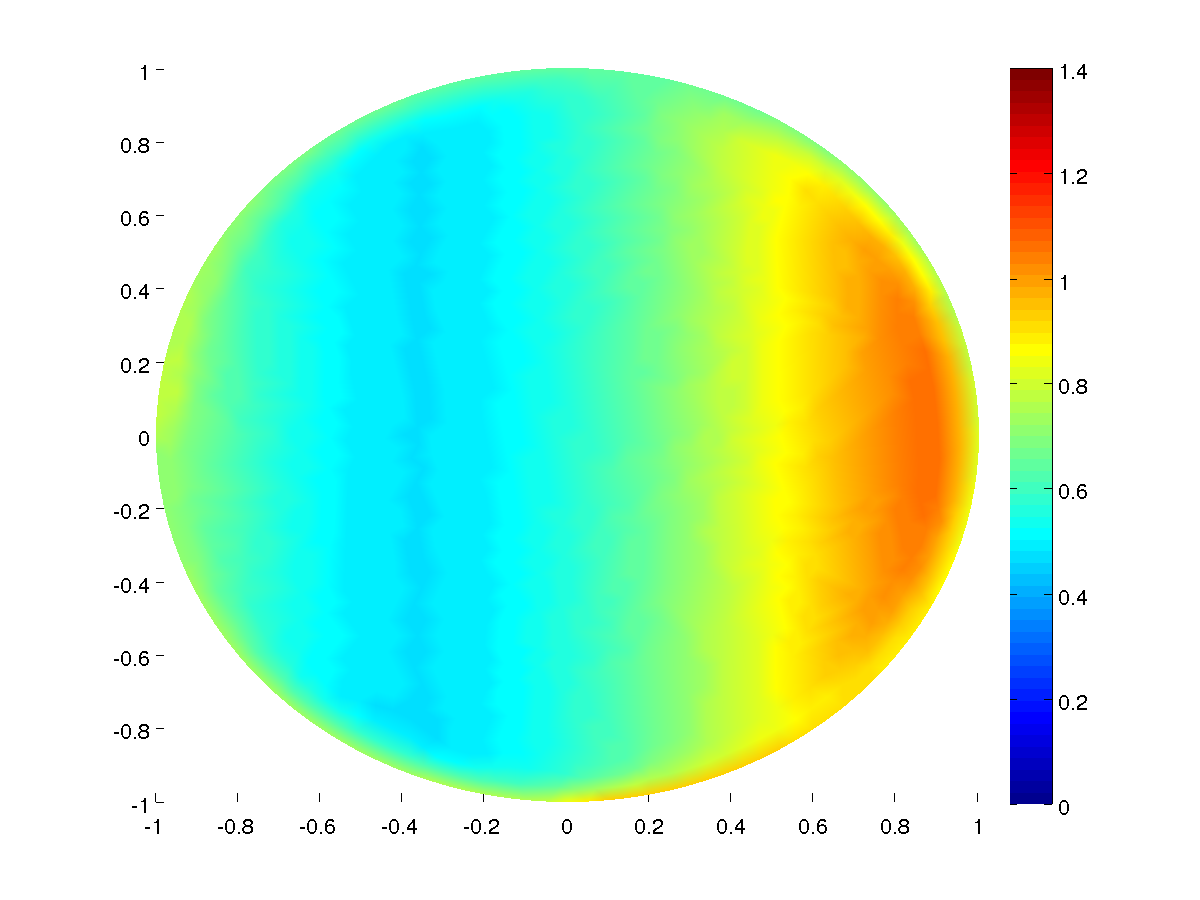}\includegraphics[width=0.24\textwidth]{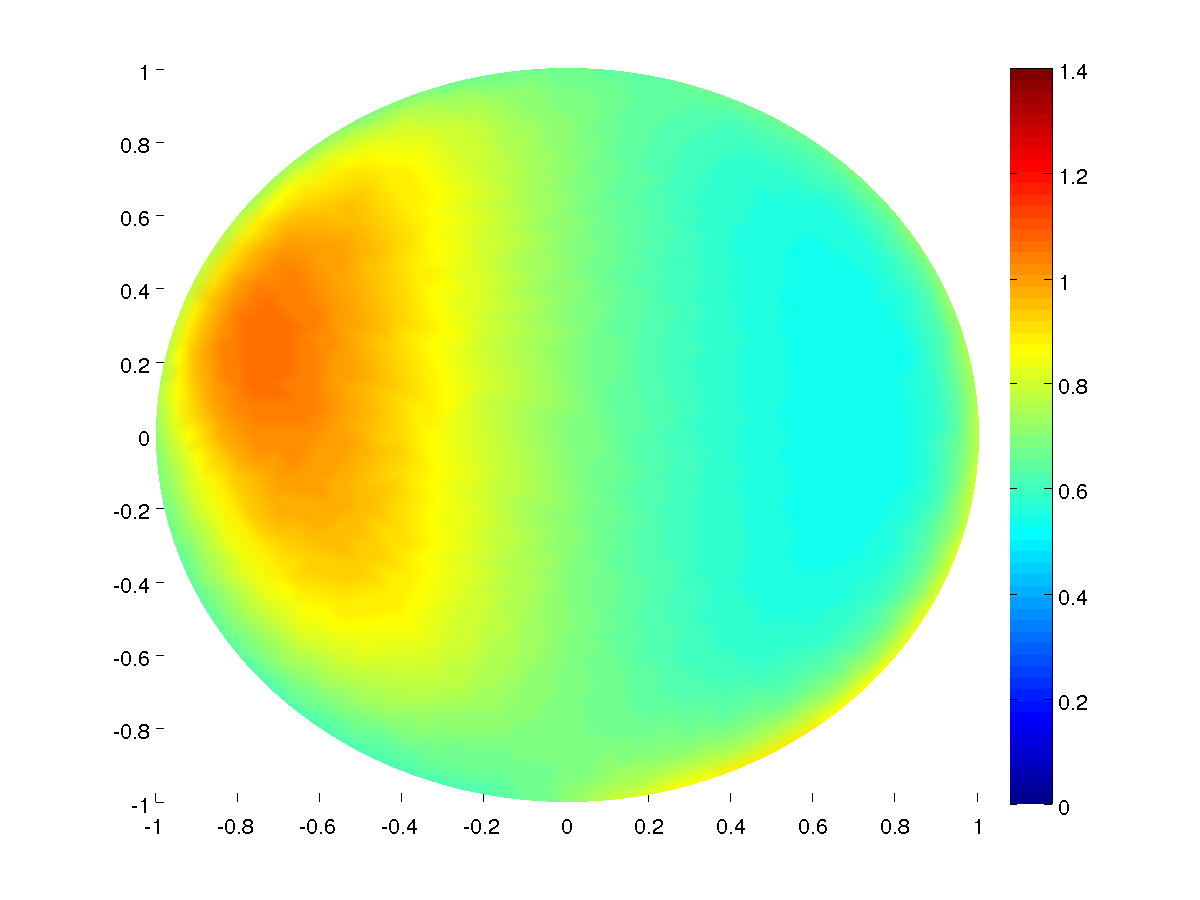}\includegraphics[width=0.24\textwidth]{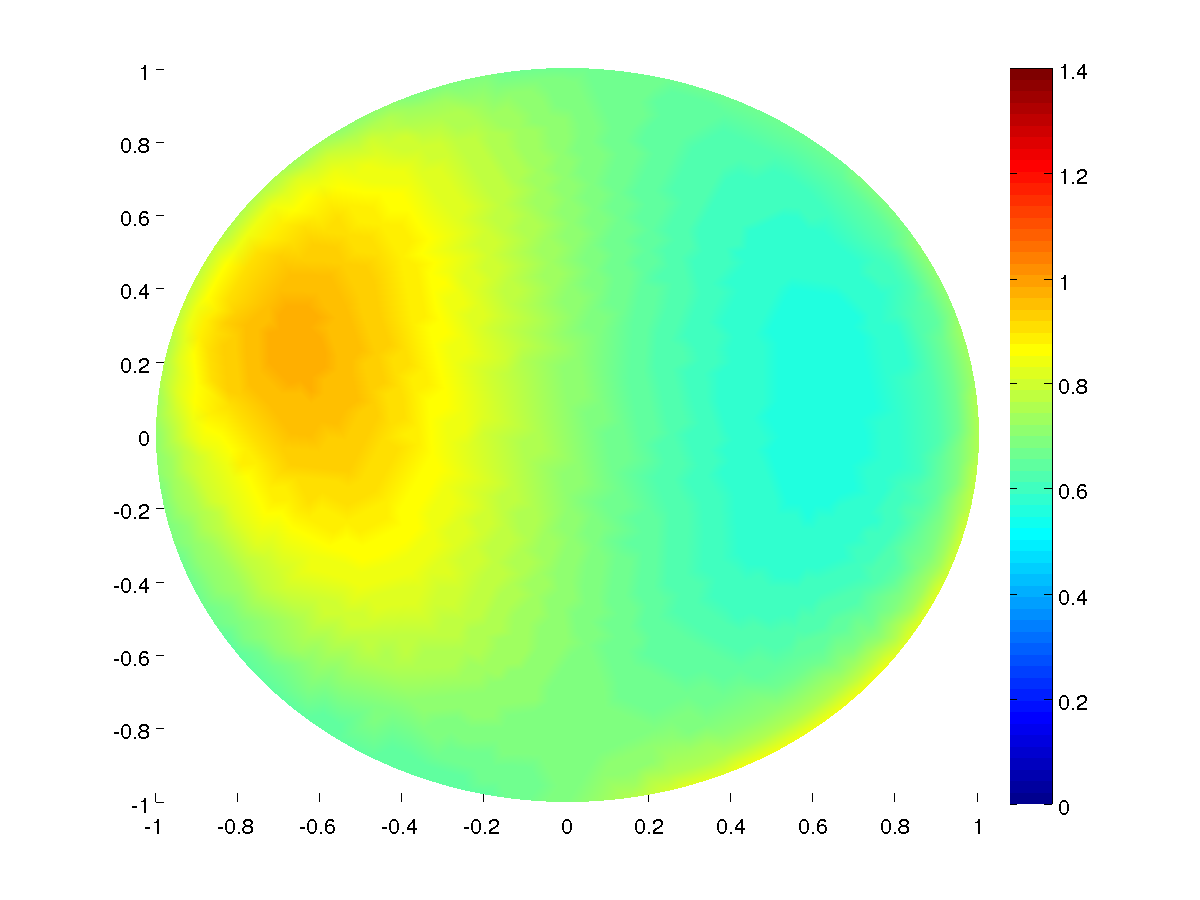}\\
	\includegraphics[width=0.24\textwidth]{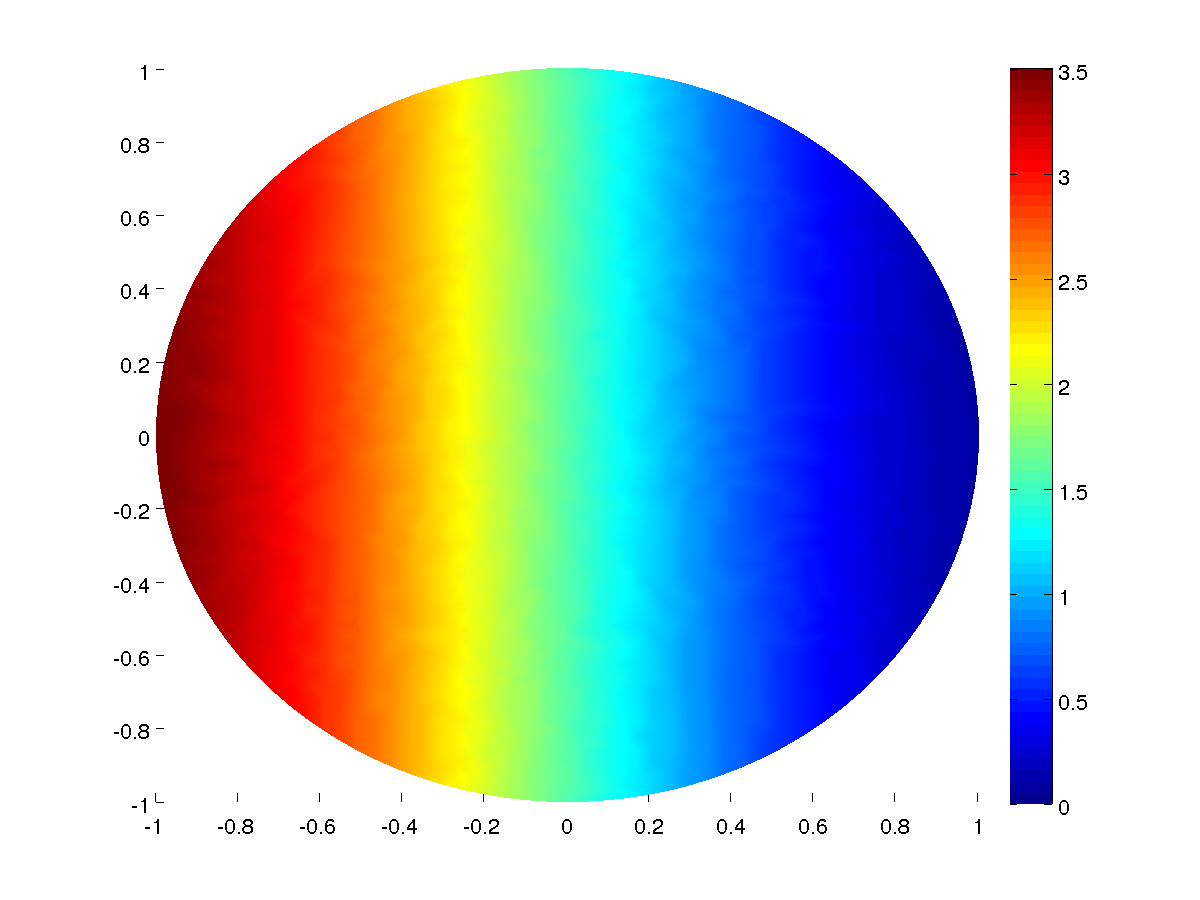}\includegraphics[width=0.24\textwidth]{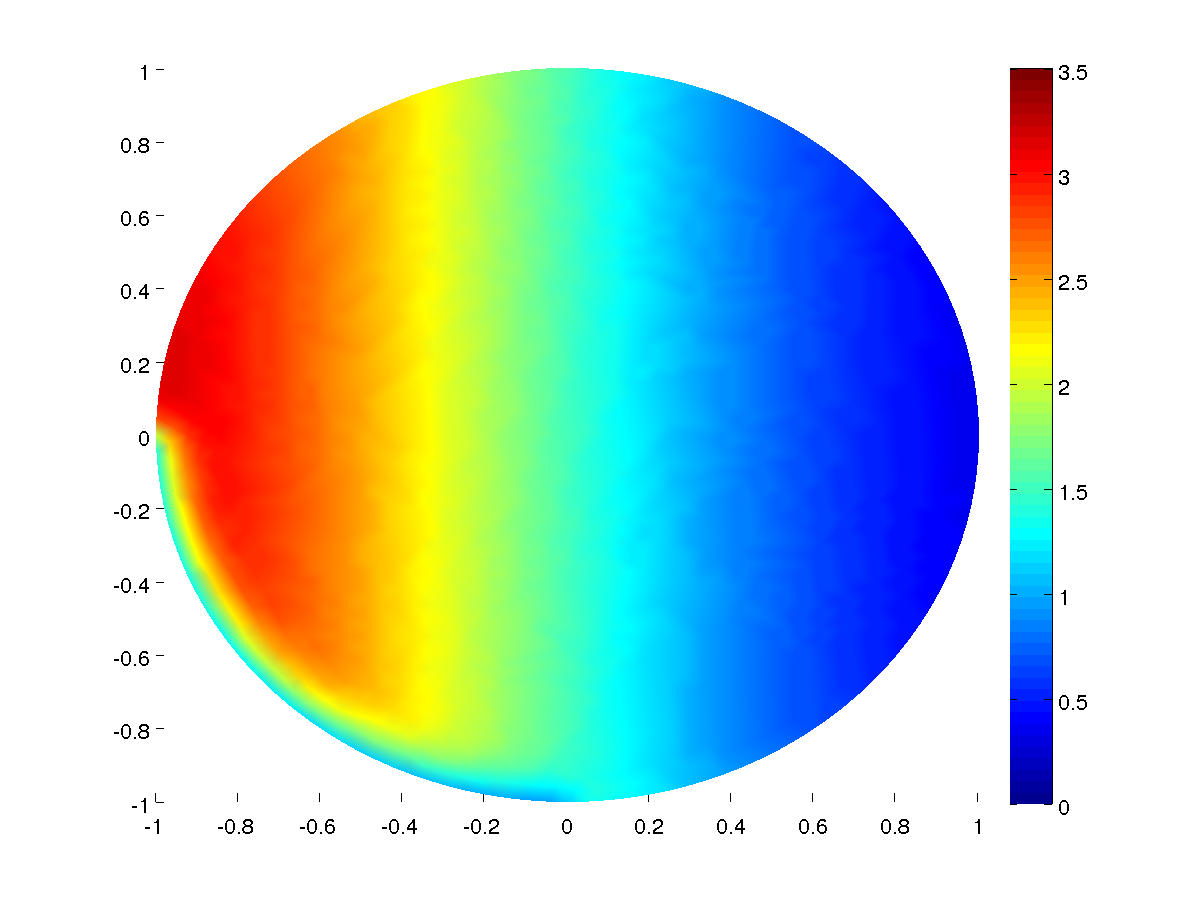}\includegraphics[width=0.24\textwidth]{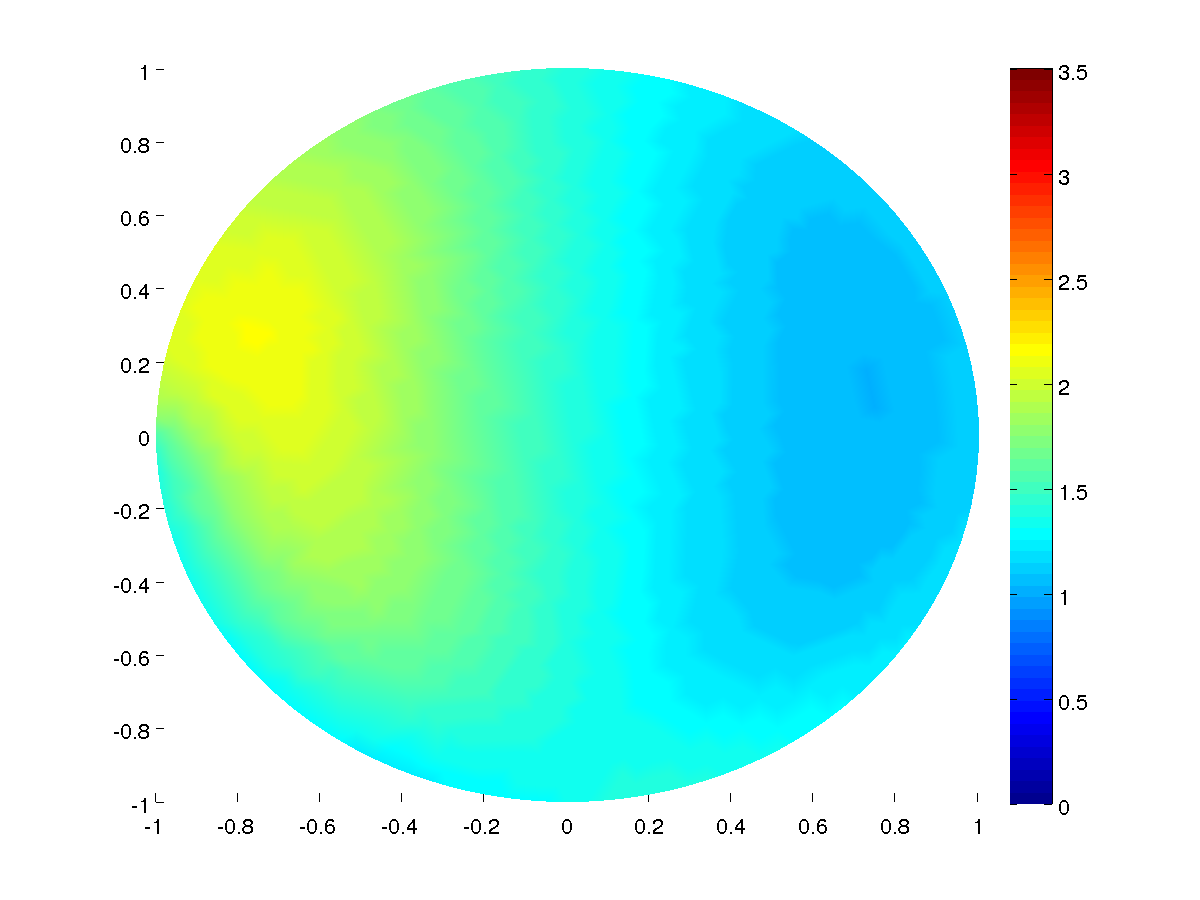}\includegraphics[width=0.24\textwidth]{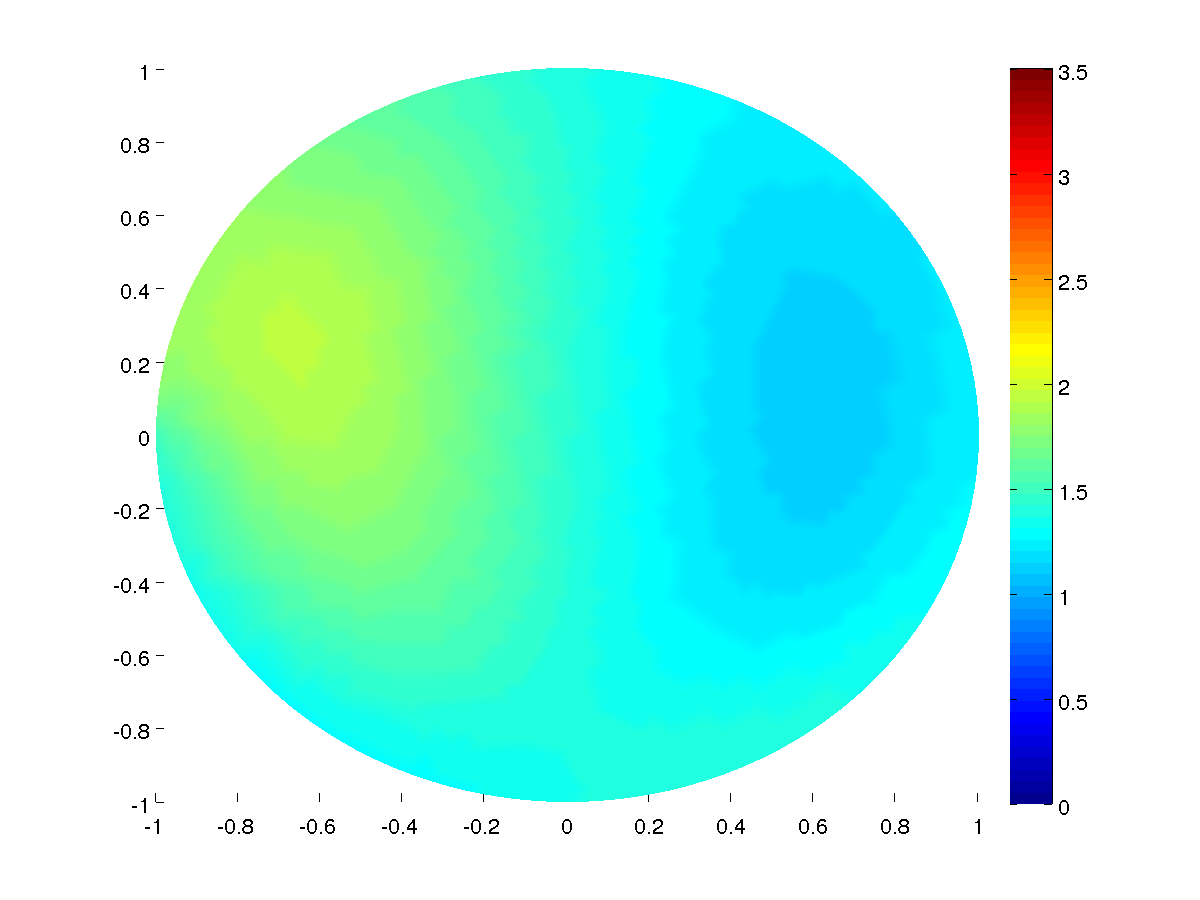}
	\caption{\small Snapshots for $L(t)$ and $P(t)$ at $t=0,0.13,1.56,3.0$. A mesh with $4064$ triangles and $\tau = 0.01$ was used. The initial data were $L^0(x,y) = x\sin(x+1)+0.5,\, P^0(x,y) = (2-x)\cos(x+1)+0.5,\,\ell^0(x,y) = 0.3(2-y)+1,\,\text{and }p^0(x,y) = 0.4y+1$.}
\end{figure}

\noindent 
As can be seen from the images, the evolution is driven by convergence to the constant equilibrium. 
Also some local effects due to the mass transfer with the boundary can be observed.

\section{Summary} \label{sec:discussion}

Volume-surface reaction-diffusion systems arise in many applications
in chemistry, in fluid dynamics, in crystal growth \cite{GG,NGCRSS},
and also in molecular-biology, where many current models aim to describe signaling pathways \cite{FNR} or the evolution of proteins \cite{FRT}.
Realistic models often involve several species and the mathematical analysis becomes increasingly  cumbersome for larger systems. The guideline of this paper therefore was to develop analytical and numerical methods which are general in the sense that they are based only on a few fundamental properties which are shared by many problems, e.g., 
the non-negativity of solutions, constant equilibria, exponential stability and convergence to equilibrium.  


Although we confined ourselves here to a simple model problem, our arguments, 
in particular the use of entropy estimates for the analysis on the continuous and 
on the discrete level, can be used to obtain similar results for a large class of surface-volume reaction-diffusion problems having constant equilibrium states. 
Big parts of our analysis could even be extended to problems with non-constant equilibria. 
In particular, quadratic entropy functionals of the form \eqref{4x4entropy} can be constructed for a wide class of problems with linear reaction dynamics. 
In the general situation, the entropies can however not be used 
so easily on the discrete level. 
Another direction of further research are complex balanced systems, 
e.g., weakly reversible reaction networks.
Such systems still feature positive equilibria and exponential convergence to equilibrium, as recently established for general first order reaction-diffusion networks in \cite{FPT}.
The formulation of an appropriate discrete entropy functional and the 
corresponding entropy dissipation estimate however require some non-trivial modifications 
of the arguments and are topics of current research.

\vskip 0.5cm
\noindent{\bf Acknowledgments.} 
This work was partly carried out during the visits of the second and the last author to TU Darmstadt.
KF and BT gratefully acknowledge the hospitality of TU Darmstadt and financial support by IRTG~1529, IRTG~1754, and NAWI Graz. 
HE was supported by DFG via grants IRTG~1529, GSC~233, and TRR~154. 
The work of JFP was supported by DFG via Grant 1073/1-1, by the Daimler and Benz Stiftung via Post-Doc Stipend 32-09/12 and by the German Academic Exchange Service via PPP grant No. 56052884. 

\def\underline#1{\emph{#1}}

\end{document}